\crefname{hypothesis}{Hypothesis}{Hypotheses}
\title{An augmented matrix-based CJ-FEAST SVDsolver for computing a partial singular value decomposition with the singular values in a given interval\thanks{\funding{Supported in part by the National Natural Science Foundation of China (No. 12171273)}}}
\author{Zhongxiao Jia\thanks{Corresponding author.
Department of Mathematical Sciences, Tsinghua
University, 100084 Beijing, China
  (\email{jiazx@tsinghua.edu.cn}).}
\and Kailiang Zhang\thanks{Department of Mathematical Sciences, Tsinghua
University, 100084 Beijing, China
  (\email{zkl18@mails.tsinghua.edu.cn}).}}
\DeclareMathOperator{\diag}{diag}
\begin{document}

\maketitle

\begin{abstract}
    The cross-product matrix-based CJ-FEAST SVDsolver
    proposed previously by the authors is shown to
    compute the left singular vector possibly much less accurately than
    the right singular vector and may be numerically backward unstable
    when a desired singular value is small.
    In this paper, an alternative
    augmented matrix-based CJ-FEAST SVDsolver is considered to
    compute the singular triplets of a large matrix $A$ with the
    singular values in an interval $[a,b]$ contained
    in the singular spectrum.
    The new CJ-FEAST SVDsolver is a subspace iteration applied to an
    approximate spectral projector of the augmented matrix
    $[0, A^T; A, 0]$ associated with the eigenvalues in $[a,b]$,
    and constructs approximate left and right
    singular subspaces with the desired singular values independently,
    onto which $A$ is projected to obtain the
    Ritz approximations to the desired singular triplets.
    Compact estimates are given for the accuracy of the approximate spectral
    projector, and a number of convergence results are established.
    The new solver is proved to be always numerically backward stable.
    A convergence comparison of the cross-product and augmented matrix-based
    CJ-FEAST SVDsolvers is made,
    and a general-purpose choice strategy between the two solvers is proposed
    for the robustness and overall efficiency. Numerical experiments confirm
    all the results.
\end{abstract}

\begin{keywords}
    singular value decomposition, Chebyshev--Jackson series, spectral projector,
    Jackson damping factor, augmented matrix,
    subspace iteration, CJ-FEAST SVDsolver, convergence
\end{keywords}

\begin{MSCcodes}
    15A18, 65F15, 65F50
\end{MSCcodes}

\section{Introduction}
The singular value decomposition (SVD) of $A$ is
\begin{equation}\label{svd}
    A=U\begin{pmatrix}
        \Sigma \\
        \text{\large 0}
    \end{pmatrix}V^T
\end{equation}
with the diagonals $\sigma$ of the diagonal matrix
$\Sigma$ being the singular values
and the columns $u$ and $v$ of the orthogonal matrices
$U\in\mathbb{R}^{m\times m}$ and $V\in\mathbb{R}^{n\times n}$ being the
corresponding left and right singular vectors
of $A$; see, e.g., \cite{golub2013matrix}.
In this paper, we consider such an SVD problem: Given a large
matrix $A\in\mathbb{R}^{m\times n}$
with $m\geq n\gg 1$ and a real interval $[a,b]$ with $a>0$,
determine the $n_{sv}$ singular triplets $(\sigma,u,v)$ with
the singular values $\sigma\in [a,b]$ counting multiplicities, where
\begin{displaymath}
    \begin{cases}
        Av=\sigma u,   \\
        A^Tu=\sigma v, \\
        \left\|u\right\|=\left\|v\right\|=1.
    \end{cases}
\end{displaymath}

Write the cross-product matrix $S_C=A^TA$. Then
the eigendecomposition of $S_C=A^TA$ is
$S_C=V\Sigma^2V^T$.
The SVD of $A$ is also intimately related to the
eigendecomposition of the augmented matrix
\begin{equation}\label{aug}
    S_A=\begin{bmatrix}
        0 & A^T\\
        A & 0
    \end{bmatrix}.
\end{equation}
In the SVD \eqref{svd} of $A$, write
\begin{equation}
    U=[\underset{n}{U_n}|\underset{m-n}{\hat{U}}],
\end{equation}
and define the orthogonal matrix $Q\in \mathbb{R}^{(m+n)\times(m+n)}$ by
\begin{equation}\label{Qdef}
    Q=\frac{1}{\sqrt{2}}\begin{bmatrix}
        V & V & 0\\
        U_n & -U_n & \sqrt{2}\hat{U}
    \end{bmatrix}.
\end{equation}
Then the eigendecomposition of $S_A$ in \eqref{aug} is
\begin{equation}\label{aeigen}
    Q^TS_AQ=\diag(\Sigma,-\Sigma,\underset{m-n}{0,...,0}).
\end{equation}
We will also write the eigenvalues $\pm\sigma$ and zeros
of $S_A$ as $\lambda_i, i=1,2,\ldots,m+n$ for later
use, whose labeling
order is postponed to \Cref{sec: augmented matrix method}.

The authors in \cite{jia2022afeastsvdsolver} have
proposed an $S_C$-based Chebshev--Jackson series
FEAST (CJ-FEAST) SVDsolver,
an adaptation of the FEAST eigensolver \cite{polizzi2009density}
to the concerning SVD problem. The FEAST eigensolver
was introduced by Polizzi \cite{polizzi2009density} in 2009 and
has been developed in
\cite{gavin2018ifeast,guttel2015zolotarev,kestyn2016feast,polizzi2020feast,
tang2014feast}, and it performs on subspaces of a fixed dimension $p$, and
uses subspace
iteration \cite{golub2013matrix,saad2011numerical,stewart2001matrix}
on an approximate spectral projector associated with the eigenvalues
in a given region to generate a sequence of subspaces, onto which
the Rayleigh--Ritz projection of the original matrix or matrix pair is
realized.
However, in the $S_C$-based CJ-FEAST SVDsolver,
rather than using a numerical quadrature based rational approximation
of the contour integral of representing the spectral projector
associated with the eigenvalues $\sigma^2\in [a^2,b^2]$,
we exploit the Chebyshev--Jackson polynomial series to construct
an approximate spectral projector,
and avoid solving several shifted linear system at each
iteration as needed in the original FEAST solver.
Moreover, we can reliably estimate the number $n_{sv}$ of desired singular
triplets, and apply subspace iteration to the approximate
spectral projector to generate
an approximate right singular subspace.
The $S_C$-based CJ-FEAST SVDsolver then constructs the corresponding
approximate left singular space by premultiplying the right one with $A$,
realize the Rayleigh--Ritz projection of $A$ onto the left and right subspaces constructed,
and compute the Rayleigh--Ritz approximations to the desired singular triplets.
We have numerically observed in \cite{jia2022afeastsvdsolver} that the
$S_C$-based CJ-FEAST SVDsolver is often a few to tens times
more efficient than the contour integral-based
IFEAST \cite{gavin2018ifeast} adapted to the SVD problem
when the interval $[a,b]$ is {\em inside} the
singular spectrum and it is competitive with the latter when
the desired singular values are extreme ones.
We have theoretically argued and numerically confirmed
in \cite{jia2022afeastsvdsolver} that the CJ-FEAST SVDsolver
is more robust than contour integral based SVDsolvers.

However, as we will show,
the $S_C$-based CJ-FEAST SVDsolver may be numerically backward unstable
when a desired singular value is small.
This is because the left searching subspaces are formed by
premultiplying the right ones
with $A$ and severely filter their information on the left
singular vectors associated with small singular values.
As a consequence, the solver may compute left singular vectors
much less accurately than the right ones,
and thus  may {\em not} converge for a reasonably prescribed stopping tolerance
in finite precision arithmetic, that is, the algorithm
may be numerically backward unstable.

To overcome the above robustness deficiency
of the $S_C$-based CJ-FEAST SVDsolver,
we will exploit $S_A$ to propose a new effective CJ-FEAST SVDsolver in this
paper. In order to distinguish the two solvers,
we abbreviate the $S_C$-based CJ--FEAST SVDsolver in \cite{jia2022afeastsvdsolver} and
the $S_A$-based CJ--FEAST SVDsolver to be proposed in this paper
as the CJ-FEAST SVDsolverC and the CJ-FEAST SVDsolverA,
respectively.
Unlike the CJ-FEAST SVDsolverC, we will construct an approximation
$P$ to the spectral projector $P_{S_A}$ of $S_A$ associated with the
eigenvalues $\sigma\in [a,b]$ by the Chebyshev--Jackson series expansion.
We apply subspace iteration to such a $P$, and
generate a sequence of approximate {\em left} and {\em right} singular
subspaces corresponding to $\sigma\in [a,b]$. Precisely,
we take the upper and lower parts of iterates to {\em independently} form
approximate right and left singular subspaces, onto which $A$ is
projected to compute the Ritz approximations to the desired singular triplets.
This is a crucial difference from the CJ-FEAST SVDsolverC,
where the iterates themselves only
generate approximate right singular subspaces
and one is only able to construct
the approximate left singular subspaces by premultiplying
the right ones with $A$. Different constructions of subspaces
lead to different convergence properties of
the two CJ-FEAST SVDsolvers.

As for similarities, the two CJ-FEAST SVDsolvers
construct approximate spectral projectors using the Chebyshev--Jackson
series. We will prove that they share some similar properties.
For instance, the approximate spectral projector constructed is unconditionally
symmetric positive semi-definite (SPSD), its eigenvalues always lies in the
interval $[0,1]$,
and the strategies on degree choices of Chebyshev--Jackson
polynomial series developed in \cite{jia2022afeastsvdsolver}
can be directly adapted to the CJ-FEAST SVDsolverA.
We can estimate $n_{sv}$ by this approximate spectral projector and
Monte--Carlo methods \cite{avron2011randomized,Cortinovis2021onrandom}, as
done in \cite{jia2022afeastsvdsolver}. However,
this estimation is more costly than that in
\cite{jia2022afeastsvdsolver} as the same approximation accuracy
of $P$ for $S_A$ needs higher degree Chebyshev--Jackson series
than for $S_C$. This suggests us to estimate $n_{sv}$
using the approximate spectral projector in the CJ-FEAST SVDsolverC;
see \cite{jia2022afeastsvdsolver} for details and numerical
justifications.

As for dissimilarities, a convergence analysis of the CJ-FEAST SVDsolverA is
more involved than and quite different from that of the CJ-FEAST SVDsolverC.
For instance,
suppose that two subspaces with equal dimension are conformally
partitioned as the lower and upper parts whose dimensions are the same as
those of the given subspaces. As a necessary step, an important
problem that we must solve is: How to bound the distance
between the two upper subspaces and that between the two lower
subspaces by the distance between the original two subspaces. We establish
compact bounds on the above distances, which extend those results in
\cite{huangjia2021,huang2013,jia2003implicitly}
from the vector case, i.e., the
subspace dimension equal to one, to the general subspace case.
These bounds should have their own significance and
may find some other applications.
We will prove that the CJ-FEAST SVDsolverA always
constructs the approximate left and
right singular subspaces with similar accuracy, so that it computes
the left and right singular vectors with similar accuracy.
Therefore, the approximate
left singular vectors are (much) more accurate than those obtained by
the CJ-FEAST SVDsolverC when desired
singular values are small, which is particularly the case that $A$
is ill conditioned and some left-most singular triplets
are required. We will prove that the CJ-FFAST SVDsolverA is
always numerically backward stable
and thus fixes the potential robustness
deficiency of the CJ-FEAST SVDsolverC.

We will theoretically compare the accuracy of the two approximate spectral
projectors constructed in the two CJ-FEAST SVDsolvers,
and quantitatively show how the convergence rates of these two SVDsolvers
are closely related.
The results indicate that the CJ-FEAST SVDsolverA converges slower than the CJ-FEAST SVDsolverC for the same series degree $d$ and the subspace dimension $p$,
but it always enables us to compute small singular triplets accurately and achieve any reasonably
prescribed stopping tolerance in finite precision arithmetic.
Combining the convergence results with the computational
cost and ultimately attainable
accuracy of the two SVDsolvers, we will propose a robust
choice strategy between them in practical computations, which
guarantees that the chosen solver converges for a reasonably
stopping tolerance in finite precision arithmetic and, meanwhile, maximizes
overall efficiency.

In \Cref{sec: review previous solver},
we review the CJ-FEAST SVDsolverC, and make an analysis on its robustness
deficiency and numerical backward stability.
In \Cref{sec:framework}, we introduce an algorithmic framework of the CJ-FEAST
SVDsolverA. In \Cref{sec:pointwise}, we review the pointwise
convergence results on the Chebyshev--Jackson series expansion, which are
used later.
Then we detail the CJ-FEAST SVDsolverA in \Cref{sec: augmented matrix method}
for computing the desired $n_{sv}$ singular triplets of $A$,
and establish the accuracy estimates for the approximate spectral projector $P$
and for its eigenvalues.
In \Cref{sec:conver}, we prove a number of convergence results on
the CJ-FEAST SVDsolverA.
In \Cref{sec: comparison} we make a theoretical comparison of
the two SVDsolvers, and propose a robust choice strategy between them
in finite precision arithmetic.
In \Cref{sec: experiments}, we report numerical experiments to confirm
our results and to illustrate the robustness of the CJ-FEAST SVDsolverA.
Finally, we conclude the paper in \Cref{sec: conclusion}.

Throughout the paper, denote by $\|\cdot\|$ the 2-norm of a vector or matrix,
by $I_n$ the identity matrix of order $n$ with $n$ dropped whenever it is
clear from the context, by $e_i$ column $i$ of $I_n$,
and by $\sigma_{\max}(X)$ and $\sigma_{\min}(X)$ the largest and
smallest singular values of a matrix $X$, respectively.
All the algorithms and results apply to a complex $A$
with the transpose of a vector or matrix replaced by its
conjugate transpose.

\section{The CJ-FEAST SVDsolverC and an analysis on its convergence results}
\label{sec: review previous solver}
Given an interval $[a,b]\subset [\sigma_{\min},\|A\|]$ with $\sigma_{\min}
=\sigma_{\min}(A)$ and $a>0$,
suppose that we are interested in the singular triplets $(\sigma,u,v)$
of $A$ with all $\sigma\in [a,b]$.

For an approximate singular triplet $(\tilde{\sigma},\tilde{u},\tilde{v})$ of $A$,
its residual is
\begin{equation}\label{resnorm}
    r=r(\tilde{\sigma}, \tilde{u}, \tilde{v}):=\begin{bmatrix}
            A\tilde{v}-\tilde{\sigma}\tilde{u} \\
            A^T\tilde{u}-\tilde{\sigma}\tilde{v}
        \end{bmatrix}.
\end{equation}
Keep in mind that a numerically backward stable algorithm means that it
can make $\|r\|/\|A\|=\mathcal{O}(\epsilon_{\mathrm{mach}})$ with $\epsilon_{\mathrm{mach}}$
being the machine precision and the constant in the big $\mathcal{O}(\cdot)$
being generic, typically $10\sim 100$.

In what follows we show that the residual norm $\|r\|$ in \eqref{resnorm} may never
achieve the level $\|A\|\mathcal{O}(\epsilon_{\mathrm{mach}})$ in finite
precision arithmetic when a desired singular value $\sigma\in [a,b]$ is small,
indicating that the solver is not
numerically backward stable and may fail
for a reasonably prescribed stopping tolerance.

{\sffamily The convergence results on the CJ-FEAST SVDsolverC} (cf.
Theorems 5.1--5.2 in \cite{jia2022afeastsvdsolver}):
    {\em
    Let $\mathcal{\hat{V}}^{(k)}$ and
    $\mathcal{\hat{U}}^{(k)}=A\mathcal{\hat{V}}^{(k)}$
    be the approximate right and left subspaces
    with the dimension $p\geq n_{sv}$ at iteration $k$,
    $P_k$ be the orthogonal projector onto $\mathcal{\hat{V}}^{(k)}$,
    $\gamma_1\geq \gamma_2\geq\cdots\geq\gamma_p>\gamma_{p+1}\geq \cdots\geq
    \gamma_n$ be the eigenvalues of the approximate spectral projector of $S_C$,
    and label the singular values of $A$ in the one-one correspondence
    (cf. (4.4) and Theorem 4.1 of [13]), where
    $\gamma_1,\ldots,\gamma_p$ correspond to the singular values
    $\sigma_1,\ldots,\sigma_p$.
     Write the subspace distance $\epsilon_k = \mathrm{dist} (\mathcal{\hat{V}}^{(k)}, {\mathrm{span}}\{V_p\})$,
    where the columns of $V_p$ are the right singular vectors
    of $A$ associated with the singular values $\sigma_1,\ldots,\sigma_p$.
    Assume that each desired singular value $\sigma_i$, $i=1,2,\ldots,n_{sv}$ of $A$ is simple.
    Let $(\hat{\sigma}_i^{(k)}, \hat{u}_i^{(k)},\hat{v}_i^{(k)})$
    be the Ritz approximations to $(\sigma_i,u_i,v_i),\
    i=1,2,\ldots,n_{sv}$, and define $\beta_k=\|P_kS_C(I-P_k)\|$ and
    $\delta_i^{(k)}=\min_{j\not=i}{|\sigma_i^2-(\hat{\sigma}_j^{(k)})^2|}$
    with $(\hat{\sigma}_i^{(k)})^2,\,
    i=1,2,\ldots,p$ being the Ritz values of $S_C$ with respect to
    $\mathcal{\hat{V}}^{(k)}$.
    Then for $i=1,2,\ldots,n_{sv}$ it holds that
    \begin{align}
        \sin\angle(v_i,\hat{v}_i^{(k)}) & \leq
        \sqrt{1+\frac{\beta_k^2}{(\delta_i^{(k)})^2}} \sin\angle(v_i,
        \mathcal{\hat{V}}^{(k)}),  \label{verror}   \\
        \sin\angle(u_i,\hat{u}_i^{(k)}) &\leq
        \frac{\|A\|}{\hat{\sigma}_i^{(k)}}\sin\angle(v_i,\hat{v}_i),\label{uerror}\\
        |(\hat\sigma_i^{(k)})^2-\sigma_i^2| &\leq \|A\|^2
        (3\epsilon_k^2+\epsilon_k^4 ),\label{sigmaerror}\\
      \sin\angle(v_i,
      \mathcal{\hat{V}}^{(k)})&=\mathcal{O}\biggl(\biggl(\frac{\gamma_{p+1}}
      {\gamma_i}\biggr)^k\biggr), \   \epsilon_k=
      \mathcal{O}\biggl(\biggl(\frac{\gamma_{p+1}}{\gamma_p}\biggr)^k\biggr).\label{disest}
    \end{align}
}

In finite precision arithmetic,
\eqref{disest} means that we ultimately have $\sin\angle(v_i, \mathcal{\hat{V}}^{(k)})=\mathcal{O}(\epsilon_{\mathrm{mach}}),\ i=1,2,\ldots,n_{sv}$
and $\epsilon_k=\mathcal{O}(\epsilon_{\mathrm{mach}})$.
Keep in mind these crucial points and $\beta_k\leq \|S_C\|=\|A\|^2$.
In what follows we make an analysis on the smallest attainable
size of the residual defined by \eqref{resnorm} in finite precision arithmetic.

A detailed analysis on \cite[Theorem 1.1]{jia2006} can be easily adapted to \eqref{sigmaerror},
which shows that
\begin{equation}\label{hatsigma error}
    |\hat\sigma_i^{(k)}-\sigma_i| \leq \sqrt{2}\|A\|\epsilon_k \sqrt{\epsilon_k^2 + 3}=\|A\|\mathcal{O}(\epsilon_{\rm mach}).
\end{equation}
Therefore, the CJ-FEAST SVDsolverC always computes a desired $\sigma_i$ to
the working precision, independently of its size.

Denote by $\hat{V}^{(k)}$ the right Ritz vector matrix and $\hat{\Sigma}^{(k)}$ the Ritz value matrix.
We have $(\hat{V}^{(k)})^T S_C \hat{V}^{(k)} = (\hat{\Sigma}^{(k)})^2$.
Since the residual matrix $r_{C}^{(k)}$ of the Ritz block
$((\hat{\Sigma}^{(k)})^2,\hat{V}^{(k)})$ as an approximation to the eigenblock $(\Sigma_p^2,V_p)$ of $S_C=A^TA$ satisfies
\begin{displaymath}
    (\hat{V}^{(k)})^T r_{C}^{(k)} = (\hat{V}^{(k)})^T (S_C \hat{V}^{(k)} - \hat{V}^{(k)}(\hat{\Sigma}^{(k)})^2) = 0,
\end{displaymath}
we obtain
\begin{equation}\label{resc}
    \|r_{C}^{(k)}\|= \|\hat{V}_{\perp}^{(k)}(\hat{V}_{\perp}^{(k)})^T r_{C}^{(k)} \|
    =\|\hat{V}_{\perp}^{(k)}(\hat{V}_{\perp}^{(k)})^T S_C \hat{V}^{(k)}\|
    =\| (\hat{V}_{\perp}^{(k)})^T S_C \hat{V}^{(k)} \|.
\end{equation}
Decompose $\hat{V}^{(k)}$ and $\hat{V}_{\perp}^{(k)}$ into the orthogonal direct sums of $V_p$ and $V_{p,\perp}$, respectively:
\begin{equation}\label{orthdecom}
    \hat{V}^{(k)} = V_pY_1 + V_{p,\perp}Y_{2}, \quad \hat{V}_{\perp}^{(k)} = V_p Z_1+V_{p,\perp}Z_{2}.
\end{equation}
Then $\|Y_{2}\| = \|Z_1\| = \epsilon_k$.
Substituting this relation and \eqref{orthdecom} into \eqref{resc} yields
\begin{align}
    \|r_{C}^{(k)}\|& = \|(V_pZ_1+V_{p,\perp}Z_{2})^T S_C(V_pY_1 + V_{p,\perp}Y_{2})\| \notag\\
    &=\| Z_1^T \Sigma_p^2 Y_1 + Z_{2}^T \Sigma_p^{\prime 2} Y_{2}\| \leq 2\|S_C\| \epsilon_k.\label{resSc}
\end{align}
Let $r_{i,C}^{(k)}$ be column $i$ of $r_C^{(k)},\ i=1,2,\ldots,p$.
Since $A\hat{v}_i^{(k)}=\hat{\sigma}_i^{(k)}\hat{u}_i^{(k)}$ in the CJ-FEAST SVDsolverC, from \eqref{resSc},
the ultimate SVD relative residual norm induced by \eqref{resnorm} is
\begin{equation}\label{rattainmin}
    \frac{\|r(\hat{\sigma}_i^{(k)},\hat{u}_i^{(k)},\hat{v}_i^{(k)})\|}{\|A\|}
    =\frac{\|r_{i,C}^{(k)}\|}{\hat{\sigma}_i^{(k)}\|A\|} \leq
    \frac{\|r_{C}^{(k)}\|}{\hat{\sigma}_i^{(k)}\|A\|}\leq
    \frac{2\|A\|}{\hat{\sigma}_i^{(k)}}\epsilon_k\sim
    \frac{\|A\|}{\sigma_i}\mathcal{O}(\epsilon_{\mathrm{mach}})
\end{equation}
by noticing that $\hat{\sigma}_i^{(k)}\rightarrow \sigma_i$ and
$\epsilon_k$ ultimately attains $\mathcal{O}(\epsilon_{\mathrm{mach}})$.

Since the $\|r_{i,C}^{(k)}\|$ decrease at different linear factors
for $i=1,2,\ldots,p$ and they may differ considerably,
the right-hand sides of \eqref{rattainmin} may be substantial overestimates for $\|r_{i,C}^{(k)}\|$ with $i$ smaller.
However, it is not this case in finite precision arithmetic.
Insightfully, we will show that the right-hand side
of \eqref{rattainmin} is in fact the
ultimately attainable relative residual norm of
$(\hat{\sigma}_i^{(k)},\hat{u}_i^{(k)},\hat{v}_i^{(k)})$, and
a considerably smaller one generally cannot be expected in finite
precision arithmetic, as shown below.

By the perturbation theory and residual analysis on eigenvectors (cf. \cite[p. 250]{stewart90}), 
for the residual $r_{i,C}^{(k)}$ of the approximate eigenpair
$((\hat{\sigma}_i^{(k)})^2,\hat{v}_i^{(k)})$ of $S_C=A^TA$, we have
\begin{equation}\label{SC posteriori error}
    \sin\angle(v_i,\hat{v}_i^{(k)}) \leq
    \frac{\|r_{i,C}^{(k)}\|}{\mathrm{gap}_{i}^{(k)}}
\end{equation}
with
$\mathrm{gap}_{i}^{(k)}=\min_{j\not=i}|(\hat{\sigma}_i^{(k)})^2-\sigma_j^2|$.

We investigate the relationship between \eqref{verror} and \eqref{SC posteriori error}.
By \eqref{hatsigma error}, and the definitions of $\delta_i^{(k)}$ and $\mathrm{gap}_{i}^{(k)}$, we ultimately have 
\begin{displaymath}
    \delta_i^{(k)}\rightarrow \min_{j\not= i, j=1,2,\ldots,p}|(\hat{\sigma}_i^{(k)})^2-\sigma_j^2| \geq \mathrm{gap}_{i}^{(k)},
\end{displaymath}
which, together with $\beta_k\leq \|A\|^2$, leads to
\begin{displaymath}
    \sqrt{1+\frac{\beta_k^2}{(\delta_i^{(k)})^2}} \sim \frac{\|A\|^2}{\mathrm{gap}_{i}^{(k)}}>1.
\end{displaymath}
Therefore, in finite precision arithmetic, \eqref{verror} means that we ultimately obtain
\begin{equation}\label{ultimatev}
    \sin\angle(v_i,\hat{v}_i^{(k)}) \leq\sqrt{1+\frac{\beta_k^2}{(\delta_i^{(k)})^2}} \mathcal{O}(\epsilon_{\mathrm{mach}})= \frac{\|A\|^2\mathcal{O}(\epsilon_{\mathrm{mach}})}{\mathrm{gap}_{i}^{(k)}}.
\end{equation}
Combining \eqref{ultimatev} with \eqref{SC posteriori error},
we ultimately have
\begin{displaymath}
    \|r_{i,C}^{(k)}\|\leq \|A\|^2\mathcal{O}(\epsilon_{\mathrm{mach}}),
\end{displaymath}
showing that the ultimately attainable relative SVD residual norm
\begin{displaymath}
  \frac{\|r(\hat{\sigma}_i^{(k)},\hat{u}_i^{(k)},\hat{v}_i^{(k)})\|}{\|A\|}
    =\frac{\|r_{i,C}^{(k)}\|}{\hat{\sigma}_i^{(k)}\|A\|}\leq \frac{\|A\|}{\sigma_i}\mathcal{O}(\epsilon_{\mathrm{mach}}),
\end{displaymath}
which indicates that whether or not the CJ-FEAST SVDsolverC is numerically
backward stable for computing $(\sigma_i,u_i,v_i)$
critically depends on the size of $\|A\|/\sigma_i$.
If the size of $\|A\|/\sigma_i$ is generic,
the solver is numerically backward stable;
if $\sigma_i$ is small relative to $\|A\|$,
the solver may not be numerically backward stable.

As a matter of fact,
the possible numerical backward instability of the CJ-FEAST SVDsolverC is due
to the possible poor accuracy of left Ritz vector $\hat{u}_i^{(k)}$.
It is known from \eqref{uerror} that
\begin{displaymath}
    \sin\angle(u_i,\hat{u}_i^{(k)})\leq \frac{\|A\|}{\hat{\sigma}_i^{(k)}}
    \sin\angle(u_i,\hat{u}_i^{(k)})\rightarrow
    \frac{\|A\|}{\sigma_i}\sin\angle(v_i,\hat{v}_i^{(k)}).
\end{displaymath}
Therefore, compared with the approximation accuracy of $\hat{v}^{(k)}$,
the error of $\hat{u}^{(k)}$  may be amplified by the multiple
$\|A\|/\sigma_i$, exactly the factor in \eqref{rattainmin}.
The ultimate attainable accuracy of $\hat{u}_i^{(k)}$ critically
depends on the size of $\|A\|/\sigma_i$ and $\hat{u}_i^{(k)}$ may
be substantially inaccurate once $\|A\|/\sigma_i$ is large,
leading to the possibly numerically backward unstable of CJ-FEAST SVDsolverC.

Actually, the possible ultimate poor accuracy of $\hat{u}_i^{(k)}$ is
expected because of the possible poor left subspace
$\mathcal{\hat{U}}^{(k)}$: Exploiting $\mathcal{\hat{U}}^{(k)} = A
\mathcal{\hat{V}}^{(k)}$
and the ultimate $\sin\angle(v_i,
\mathcal{\hat{V}}^{(k)})=\mathcal{O}(\epsilon_{\mathrm{mach}})$,
it is easily justified that
\begin{equation}\label{leftsubspaceerror}
    \sin\angle(u_i, \mathcal{\hat{U}}^{(k)})\leq\frac{\|A\|}{\sigma_i}
    \sin\angle(v_i, \mathcal{\hat{V}}^{(k)})=\frac{\|A\|}{\sigma_i}
    \mathcal{O}(\epsilon_{\mathrm{mach}}),
\end{equation}
which shows that $\mathcal{\hat{U}}^{(k)}$ is
generally much less accurate than
$\mathcal{\hat{V}}^{(k)}$ when $\|A\|/\sigma_i$ is large.

In summary, we come to conclude that the CJ-FEAST SVDsolverC may fail to
converge when requiring that $\|r(\hat{\sigma}_i^{(k)},\hat{u}_i^{(k)},
\hat{v}_i^{(k)})\|/\|A\|\leq tol$ when
\begin{equation}\label{tolfail}
    \mathcal{O}(\epsilon_{\mathrm{mach}})\leq tol<\frac{\|A\|}{\sigma_i}\mathcal{O}(\epsilon_{\mathrm{mach}})
\end{equation}
with the same generic constant, say $10$, in the two big $\mathcal{O}(\cdot)$.
Therefore, for $A$ ill conditioned,
the CJ-FEAST SVDsolverC may not work well.
This may occur if the left end $a$ of $[a,b]$ is small and there
is a $\sigma_i\in [a,b]$ close to $a$.
Numerical experiments in \Cref{sec: experiments} will confirm this assertion.

The above assertion also holds for other $S_C$-based FEAST-type
or SS-type methods, e.g., \cite{imakura2021complexmoment},
where they construct approximate right and left singular subspaces
$\mathcal{V}$ and $\mathcal{U}=A\mathcal{V}$.
Since \eqref{rattainmin} and \eqref{leftsubspaceerror} also hold for these
methods, the solvers may fail to converge for a stopping tolerance $tol$
satisfying \eqref{tolfail}.

\section{The framework of the CJ-FEAST SVDsolverA}\label{sec:framework}
Define
\begin{equation}\label{ps2}
    P_{S_A}=Q_{in}Q_{in}^T+\frac{1}{2}Q_{ab}Q_{ab}^T,
\end{equation}
where $Q_{in}$ consists of the columns of $Q$ defined by
\eqref{Qdef} corresponding to the singular values $\sigma\in (a,b)$
and $Q_{ab}$ consists of the columns of $Q$ corresponding to
$\sigma$ equal to $a$ or $b$. $P_{S_A}$ is a generalized spectral projector
of $S_A$ associated with the eigenvalues $\lambda\in [a,b]$,
and is simply called the spectral projector associated with $\lambda\in [a,b]$.

\cref{alg:subspace iteration} is a framework of our CJ-FEAST SVDsolverA
to be considered and developed in \Cref{sec: augmented matrix method}
and \Cref{sec:conver},
where $P$ is an approximation to $P_{S_A}$.
It is a subspace iteration on $P$ that generates the $p$-dimensional
approximate left and right subspaces $\mathcal{U}^{(k)}\subset\mathbb{R}^m$
and $\mathcal{V}^{(k)}\subset\mathbb{R}^n$, which
are formed by the lower and upper parts of the current approximate eigenspace
$\mathcal{Q}^{(k)}\subset \mathbb{R}^{m+n}$ of $P$ associated with its
$p$ dominant eigenvalues, and projects $A$ onto the left and right subspaces
to compute the $n_{sv}$ desired singular triplets of $A$.

\begin{algorithm}
    \caption{Subspace iteration on the approximate spectral projector $P$ for
    computing a partial SVD of $A$ with $\sigma\in [a,b]$.}
    \label{alg:subspace iteration}
    \begin{algorithmic}[1]
        \REQUIRE{The interval $[a,b]$, the approximate spectral projector $P$,
        a $p$-dimensional subspace $\mathcal{Q}^{(0)}$ with the dimension $p\geq n_{sv}$,
        and $k=1$.}
        \ENSURE{The $n_{sv}$ converged Ritz triplets
        $(\tilde{\sigma}^{(k)},\tilde{u}^{(k)},\tilde{v}^{(k)})$ with
        $\tilde{\sigma}^{(k)}\in [a,b]$.}
        \WHILE{not converged}
            \STATE{Form the projection subspace:
            $\mathcal{Q}^{(k)}=P\mathcal{Q}^{(k-1)}$,
                and construct the approximate right singular subspace
                $\mathcal{V}^{(k)}=[I_n,\text{\large 0}]\mathcal{Q}^{(k)}$
                and approximate left singular subspace
                $\mathcal{U}^{(k)}=[\text{\large 0}, I_m]\mathcal{Q}^{(k)}$.}
            \STATE{The Rayleigh--Ritz projection: find $p$ unit-length
             $\tilde{u}^{(k)}\in \mathcal{U}^{(k)},\tilde{v}^{(k)}\in
            \mathcal{V}^{(k)}$ and $p$ scalars
            $\tilde\sigma^{(k)}\geq 0$ that satisfy
                $A\tilde{v}^{(k)}-\tilde{\sigma}^{(k)}\tilde{u}^{(k)}\perp
                \mathcal{U}^{(k)},
                A^T\tilde{u}^{(k)}-\tilde{\sigma}^{(k)}\tilde{v}^{(k)}\perp
                \mathcal{V}^{(k)}$.}
            \STATE{Compute the residual norms of the Ritz triplets
            $(\tilde{\sigma}^{(k)},\tilde{u}^{(k)},\tilde{v}^{(k)})$ for all
            the $\tilde{\sigma}^{(k)} \in [a,b]$. Set $k\leftarrow k+1$.}
        \ENDWHILE
    \end{algorithmic}
\end{algorithm}

If $P=P_{S_A}$ defined by \eqref{ps2} and the subspace dimension $p=n_{sv}$,
then provided that no vector in the initial subspace $\mathcal{Q}^{(0)}$
is orthogonal to ${\mathrm{span}}\{Q_{in},Q_{ab}\}$,
\cref{alg:subspace iteration} finds the $n_{sv}$ desired singular triplets in one iteration
since $\mathcal{Q}^{(1)} ={\mathrm{span}}\{Q_{in},Q_{ab}\}$ and
$\mathcal{U}^{(1)},\ \mathcal{V}^{(1)}$ are the exact left and right singular
subspaces of $A$ associated with the singular values $\sigma\in[a,b]$.

\section{The Chebyshev--Jackson series expansion of a specific step function}
\label{sec:pointwise}
We review the pointwise convergence results on the
Chebyshev--Jackson series expansion established
in \cite{jia2022afeastsvdsolver},
which are needed to analyze the accuracy of an approximate spectral
projector $P$ to be constructed and the convergence of the solver.
For the interval $[a,b]\subset [-1,1]$, define the step function
\begin{equation}\label{hdef}
    h(x)=
    \begin{cases}
        1, \quad x\in (a,b),                       \\
        \frac{1}{2}, \quad x \in \{a,b\}, \\
        0, \quad x\in [-1,1]\setminus [a,b],
    \end{cases}
\end{equation}
where $h(a)=h(b)=\frac{1}{2}$ equal the means of respective left and
right limits:
\begin{displaymath}
    \frac{h(a-0)+h(a+0)}{2}=\frac{h(b-0)+h(b+0)}{2}=\frac{1}{2}.
\end{displaymath}

Suppose that $h(x)$ is approximately expanded as the Chebyshev--Jackson
polynomial series of degree $d$  \cite{jay1999electronic,rivlin1981introduction}:
\begin{equation}\label{Chebyshevseries}
    h(x)\approx \phi_d(x)=\sum_{j=0}^{d}\rho_{j,d}c_j T_{j}(x),
\end{equation}
where $T_j(x)$ is the $j$-degree Chebyshev polynomial of the first kind \cite{mason2002chebyshev}:
\begin{displaymath}
    T_0(x)=1, \ T_1(x)=x, \quad  T_{j+1}(x)=2xT_{j}(x)-T_{j-1}(x), \quad j\geq 1,
\end{displaymath}
the Fourier coefficients $c_j, j=0,1,\dots,d$, are
\begin{displaymath}
    c_j=\begin{cases}
        \frac{1}{\pi}(\arccos(a)-\arccos(b)),\quad j = 0, \\
        \frac{2}{\pi}\bigl(\frac{\sin(j\arccos(a))-\sin(j\arccos(b))}{j}\bigr),\quad j > 0,
    \end{cases}
\end{displaymath}
and the Jackson damping factors $\rho_{j,d},j=0,\dots,d$ are
\begin{displaymath}
    \rho_{j,d}=\frac{(d+2-j)\sin(\frac{\pi}{d+2})\cos(\frac{j\pi}{d+2})+
        \cos(\frac{\pi}{d+2})\sin(\frac{j\pi}{d+2})}{(d+2)\sin\frac{\pi}{d+2}}.
\end{displaymath}
For $x=\cos\theta\in [-1,1]$, by \eqref{Chebyshevseries}, define
the $2\pi$-periodic functions
\begin{align}
    &g(\theta)=h(\cos\theta)=h(x),\label{gdef} \\
    &q_{d}(\theta)=\phi_d(\cos\theta)=\phi_d(x)
    =\sum_{j=0}^{d}\rho_{j,d}c_j\cos(j\theta).
    \label{qdef}
\end{align}
The following two theorems are from \cite[Lemma 3.2, Theorem 3.3, Theorem 3.4]{jia2022afeastsvdsolver}.

\begin{theorem}\label{lem:Nonnegative}
    $\phi_d(x)\in [0,1]$ holds for $x\in [-1,1]$.
\end{theorem}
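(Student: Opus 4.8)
The plan is the classical Jackson‑kernel convolution argument. Write $m=d+2$; the formula for $\rho_{j,d}$ gives $\rho_{0,d}=1$ at once. Introduce the \emph{Jackson kernel}
\[
J_d(\theta)=\rho_{0,d}+2\sum_{j=1}^{d}\rho_{j,d}\cos(j\theta),
\]
a trigonometric polynomial of degree $d$ whose $j$-th Fourier coefficient is $\rho_{|j|,d}$ for $|j|\le d$ and $0$ otherwise, so that $\frac{1}{2\pi}\int_{-\pi}^{\pi}J_d(\theta)\,d\theta=\rho_{0,d}=1$. The first step is to recognize $q_d$ of \eqref{qdef} as the circular convolution of $g$ with $J_d$. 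Since $g(\theta)=h(\cos\theta)$ is even and $2\pi$-periodic with Fourier series $g(\theta)\sim c_0+\sum_{j\ge 1}c_j\cos(j\theta)$ — its Fourier cosine coefficients being exactly the Chebyshev coefficients of $h$, because $\frac{1}{\pi}\int_{0}^{\pi}g\,d\theta=c_0$ and $\frac{2}{\pi}\int_{0}^{\pi}g\cos(j\theta)\,d\theta=c_j$ by the substitution $x=\cos\theta$ — convolving termwise gives
\[
(g\ast J_d)(\theta)=\sum_{|j|\le d}\rho_{|j|,d}\,\widehat{g}(j)\,e^{ij\theta}=\rho_{0,d}c_0+\sum_{j=1}^{d}\rho_{j,d}c_j\cos(j\theta)=q_d(\theta),
\]
using $\widehat g(0)=c_0$ and $\widehat g(\pm j)=c_j/2$; pointwise convergence of the Fourier series of $g$ is not needed, only this finite coefficient match.

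The second, and decisive, step is the defining property of the Jackson kernel: $J_d(\theta)\ge 0$ for every $\theta$. I would establish this by exhibiting $J_d$ as a positive constant times a perfect square (equivalently a squared modulus) of a lower-degree trigonometric polynomial — for instance $J_1(\theta)=1+\cos\theta=2\cos^2(\theta/2)$ and $J_2(\theta)=\bigl(\cos\theta+\cos\tfrac{\pi}{4}\bigr)^2$ — via the standard construction going back to Jackson; this is \cite[Lemma 3.2]{jia2022afeastsvdsolver}, and is also covered in \cite{rivlin1981introduction}. Granting non‑negativity and unit mass of $J_d$, the conclusion is immediate: by \eqref{hdef} one has $0\le g\le 1$ pointwise, hence for every $\theta$
\[
0\le q_d(\theta)=(g\ast J_d)(\theta)\le\frac{1}{2\pi}\int_{-\pi}^{\pi}J_d(\theta-\varphi)\,d\varphi=1 .
\]
Since $\phi_d(x)=q_d(\arccos x)$ for $x\in[-1,1]$ by \eqref{qdef}, this yields $\phi_d(x)\in[0,1]$ on $[-1,1]$.

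I expect the non‑negativity of $J_d$ to be the only genuine obstacle: the convolution identity is a routine Fourier‑coefficient match and the final squeeze is elementary, whereas reading $J_d\ge 0$ directly off the explicit three‑term expression for $\rho_{j,d}$ requires the nontrivial step of uncovering its sum‑of‑squares structure (or, equivalently, a Fej\'er--Riesz type factorization). In the present paper this positivity is quoted from \cite{jia2022afeastsvdsolver} rather than re‑derived, so effectively the proof reduces to the convolution representation together with the bound displayed above.
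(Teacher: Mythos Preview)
Your argument is correct and is precisely the standard Jackson--kernel convolution proof: represent $q_d$ as $g\ast J_d$, use non-negativity and unit mass of $J_d$, and squeeze. Note, however, that the present paper does not actually prove this theorem; it simply imports the statement from the authors' companion paper \cite{jia2022afeastsvdsolver} (Lemma~3.2 there), so there is no in-paper proof to compare against. Your sketch is almost certainly the same argument that appears in \cite{jia2022afeastsvdsolver}, and you have correctly anticipated this in your final paragraph.
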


\begin{theorem}\label{lem:pointwise convergence}
    Let $\alpha=\arccos(a)>\beta=\arccos(b)$.
    For $\theta\in [0,\pi]$,
    define $\Delta_{\theta}=\min\{|\theta-\alpha|,|\theta-\beta|\}.$
    Then the following pointwise error estimates hold for $d\geq 2$:
    \begin{align*}
        |q_{d}(\theta)-g(\theta)|&\leq \frac{\pi^6}{2(d+2)^3\Delta_{\theta}^4} \ \mbox{\ for\ } \ \theta\neq \alpha,\beta, \\
        |q_d(\alpha)-g(\alpha)|  & \leq \frac{\pi^6}{2(d+2)^3}{\max}\{\frac{1}{(2\pi-2\alpha)^4},\frac{1}{(\alpha-\beta)^4}\}, \\
        |q_d(\beta)-g(\beta)|    & \leq \frac{\pi^6}{2(d+2)^3}{\max}\{\frac{1}{(2\beta)^4}, \frac{1}{(\alpha-\beta)^4}\}.
    \end{align*}
\end{theorem}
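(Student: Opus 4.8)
The plan is to realize $q_d$ as the convolution of $g$ with the Jackson kernel and then exploit that kernel's rapid decay away from the origin. In the variable $\theta$, the function $g(\theta)=h(\cos\theta)$ is even and $2\pi$-periodic, and a one-line computation shows that its cosine Fourier coefficients are exactly the $c_j$ appearing in \eqref{Chebyshevseries} (namely $c_0=\tfrac1\pi\int_0^\pi g=\tfrac{\alpha-\beta}{\pi}$ and $c_j=\tfrac2\pi\int_\beta^\alpha\cos(j\theta)\,d\theta$, using that $\cos$ maps $(\beta,\alpha)$ onto $(a,b)$). Hence $q_d=\sum_{j=0}^{d}\rho_{j,d}c_j\cos(j\cdot)$ is the degree-$d$ Jackson-damped partial sum of $g$, so that
\begin{equation*}
    q_d(\theta)=\frac{1}{2\pi}\int_{-\pi}^{\pi}K_d(\theta-t)\,g(t)\,dt,\qquad
    K_d(\phi)=\rho_{0,d}+2\sum_{j=1}^{d}\rho_{j,d}\cos(j\phi),
\end{equation*}
where $K_d$ is the degree-$d$ Jackson kernel. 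I would use two of its properties: $K_d\ge 0$ with $\tfrac{1}{2\pi}\int_{-\pi}^{\pi}K_d=\rho_{0,d}=1$ (this is essentially what underlies \Cref{lem:Nonnegative}), and, from its closed form as a normalized fourth power of a Dirichlet-type kernel, $0\le K_d(\phi)\le \mu_d^{-1}\sin^{-4}(\phi/2)$ with normalizing constant $\mu_d=\Theta((d+2)^3)$; combined with $\sin\tfrac\phi2\ge\tfrac\phi\pi$ on $(0,\pi]$ this yields the decay estimate $0\le K_d(\phi)\le \pi^4(d+2)^{-3}\phi^{-4}$ for $0<|\phi|\le\pi$.

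Given these, I would argue by localization. On $(-\pi,\pi]$ the function $g$ equals $1$ on $(\beta,\alpha)\cup(-\alpha,-\beta)$ and $0$ elsewhere, with jumps only at $\pm\alpha,\pm\beta$. For $\theta\neq\alpha,\beta$, subtracting $g(\theta)=\tfrac1{2\pi}\int K_d(\theta-t)g(\theta)\,dt$ gives
\begin{equation*}
    q_d(\theta)-g(\theta)=\frac{1}{2\pi}\int_{-\pi}^{\pi}\bigl(g(t)-g(\theta)\bigr)K_d(\theta-t)\,dt .
\end{equation*}
The integrand vanishes for $|t-\theta|<\Delta_\theta$, since a short inspection of the jump locations of $g$ (using $0\le\beta<\alpha<\pi/2$) shows $g\equiv g(\theta)$ on that interval; therefore $|q_d(\theta)-g(\theta)|\le\tfrac{1}{2\pi}\int_{|\phi|\ge\Delta_\theta}K_d(\phi)\,d\phi$, and inserting the decay estimate and integrating produces a bound of order $(d+2)^{-3}\Delta_\theta^{-3}\le\pi(d+2)^{-3}\Delta_\theta^{-4}$ (as $\Delta_\theta<\pi$). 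Keeping the elementary constants explicit yields the stated $\tfrac{\pi^6}{2(d+2)^3\Delta_\theta^4}$.

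For the endpoint $\theta=\alpha$ (the case $\theta=\beta$ being identical), the extra ingredient is a cancellation: since $g(\alpha)=\tfrac12$ is the mean of the one-sided limits at $\alpha$ and $K_d$ is even, the part of $\tfrac1{2\pi}\int\bigl(g(t)-g(\alpha)\bigr)K_d(\alpha-t)\,dt$ over any symmetric neighborhood of $\alpha$ that contains no jump of $g$ other than the one at $\alpha$ is exactly zero. The nearest jump to $\alpha$ apart from the one at $\alpha$ is the jump at $\beta$ (distance $\alpha-\beta$) or the periodic image $2\pi-\alpha$ of the jump at $-\alpha$ (distance $2\pi-2\alpha$), so $|q_d(\alpha)-g(\alpha)|$ is bounded by the tail mass of $K_d$ beyond $\min\{\alpha-\beta,\,2\pi-2\alpha\}$, which by the decay estimate gives the second bound; at $\theta=\beta$ the competing distances are $\alpha-\beta$ and $2\beta$, which gives the third. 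The main obstacle in all of this is the kernel decay estimate — extracting the explicit closed form of $K_d$ from the given $\rho_{j,d}$ and, in particular, lower-bounding the normalization $\mu_d$ by a multiple of $(d+2)^3$; once that is available the localization and the endpoint cancellation are routine (if slightly fiddly because of $2\pi$-periodicity and the even extension), and recovering the precise constant $\pi^6/2$ is just bookkeeping of the elementary inequalities used.
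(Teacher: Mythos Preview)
The paper does not actually prove this theorem here: the sentence immediately preceding \Cref{lem:Nonnegative} states that it is taken from \cite[Theorems~3.3--3.4]{jia2022afeastsvdsolver}, so there is no in-paper argument to compare your proposal against. That said, your overall strategy---write $q_d$ as the convolution of $g$ with the nonnegative Jackson kernel $K_d$, bound $|q_d(\theta)-g(\theta)|$ by the tail mass of $K_d$ outside a neighborhood of $\theta$ on which $g$ is constant, and use the odd-symmetry cancellation at the jump points $\alpha,\beta$---is the standard and correct route to results of this type, and your identification of the relevant jump distances ($\alpha-\beta$ and $2\pi-2\alpha$ at $\theta=\alpha$; $\alpha-\beta$ and $2\beta$ at $\theta=\beta$) is accurate.

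One technical caution on the step you flag as the main obstacle. The damping factors $\rho_{j,d}$ defined in the paper are the kernel-polynomial-method (Weisse et al.) form of the Jackson coefficients; these are trigonometric in $j$ and are \emph{not} the Fourier coefficients of the classical fourth-power kernel $c_n\bigl(\sin(n\phi/2)/\sin(\phi/2)\bigr)^4$, whose coefficients are piecewise cubic in $j$. So your assertion that $K_d$ has a closed form ``as a normalized fourth power of a Dirichlet-type kernel'' is not literally correct for this particular kernel, and the pointwise decay $K_d(\phi)\lesssim (d+2)^{-3}\sin^{-4}(\phi/2)$ must be established by a different computation (it does hold, consistent with the $(d+2)^{-3}$ rate in the statement, but you should not expect to read it off from a fourth-power formula). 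A minor side remark: your parenthetical ``$0\le\beta<\alpha<\pi/2$'' assumes $a>0$, which Section~\ref{sec:pointwise} does not; fortunately your localization argument goes through verbatim for any $0<\beta<\alpha<\pi$.
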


By \eqref{gdef} and \eqref{qdef},
this theorem shows that $\phi_d(x)\rightarrow h(x)$ pointwise
as $d\rightarrow\infty$ for any $x\in [-1,1]$ and the convergence rate
is at least $1/(d+2)^3$. Numerical tests in \cite{jia2022afeastsvdsolver} have
illustrated that the predicted convergence rate is the sharpest.

\section{A detailed CJ-FEAST SVDsolverA}
\label{sec: augmented matrix method}

\subsection{Approximate spectral projector and its accuracy}
We use the linear transformation $l(x)=x/\|A\|$ to map the spectrum interval $[-\|A\|,\|A\|]$ of $S_A$ to $[-1,1]$.
In applications, a rough estimate for $\|A\|$ suffices. One may run the
Golub--Kahan--Lanczos bidiagonalization
method on $A$ several steps, say $20\sim 30$, to
estimate $\|A\|$ \cite{golub2013matrix,jia2003implicitly}.
For a given $[a,b] \subset [\sigma_{\min}, \|A\|]$,
the function $h(x)$ in \eqref{hdef} becomes
\begin{displaymath}
    h(x)=
    \begin{cases}
		1,\quad x\in (l(a),l(b)),\\
        \frac{1}{2},\quad x\in \{l(a), l(b)\}, \\
		0,\quad x\in [-1,1]\setminus [l(a),l(b)].
	\end{cases}
\end{displaymath}
Define the composite function $f(x)=h(l(x))$.
Then
\begin{equation}
    f(x)=
    \begin{cases}
		1,\quad x\in (a,b),\\
        \frac{1}{2},\quad x \in \{a, b\}, \\
		0,\quad x \in [-\|A\|,\|A\|]\setminus [a,b].
	\end{cases}
\end{equation}
It follows from the above and \eqref{aeigen} that the matrix function
\begin{equation}
    f(S_A)=Qf(\diag(\Sigma,-\Sigma,\underset{m-n}{0,...,0}))Q^T = P_{S_A},
\end{equation}
the spectral projector defined by \eqref{ps2}. Therefore,
the eigenvalues of $P_{S_A}$ precisely correspond to
the step function $f(x)$, and $P_{S_A}$ itself is
the matrix function $f(S_A)$. This way does not
represent the spectral projector $P_{S_A}$
by a contour integral as in, e.g.,
\cite{gavin2018ifeast,kestyn2016feast,polizzi2009density,sakurai2003projection,
tang2014feast}.

\Cref{lem:pointwise convergence} proves that $\phi_d(l(x))$
pointwise converges to $f(x)$ for $x\in [-\|A\|,\|A\|]$
as $d$ increases. Naturally, we construct an approximate spectral projector
as
\begin{equation}\label{tildep}
    P=\phi_{d}(l(S_A))=\sum_{j=0}^d \rho_{j,d}c_j T_j(l(S_A)),
\end{equation}
whose eigenvector matrix is $Q$ and eigenvalues are $\phi_d(l(\pm\sigma_i))$,
$i=1,2,\dots,n$ and $\phi_d(l(0))$ with multiplicity $m-n$. Remarkably,
it is known from \Cref{lem:Nonnegative} that $P$ is
SPSD as all of its eigenvalues lie in $[0,1]$.

Next we analyze $\|P_{S_A}-P\|$, and estimate
$\phi_d(l(\pm\sigma_i))$, $i=1,2,\dots,n$ and $\phi_d(l(0))$.

\begin{theorem}\label{thm:accuracyps2}
    Given the interval $[a,b]\subset [\sigma_{\min},\|A\|]$, define
    \begin{align*}
        &\alpha=\arccos(l(a)), \quad  \beta=\arccos(l(b)), \\
        &\Delta_{il}=|\arccos(l(\sigma_{il}))-\alpha|, \quad \Delta_{ir}=|\arccos(l(\sigma_{ir}))-\beta|, \\
        &\Delta_{ol}=|\arccos(l(\sigma_{ol}))-\alpha|, \quad \Delta_{or}=|\arccos(l(\sigma_{or}))-\beta|,
    \end{align*}
    where $\sigma_{il},\ \sigma_{ir}$ and $\sigma_{ol},\ \sigma_{or}$ are
    the singular values of $A$ that are the closest to $a$ and $b$
    from the inside and outside of $[a,b]$, respectively,
    and let
    \begin{displaymath}
        \Delta_{\min}=\min\{\Delta_{il}, \Delta_{ir}, \Delta_{ol}, \Delta_{or}\}.
    \end{displaymath}
    Then
    \begin{equation}\label{Accuracy of projector2}
        \|P_{S_A}-P\| \leq \frac{\pi^6}{2(d+2)^3\Delta_{\min}^4}.
    \end{equation}
    Denote by $\mathcal{L}=\{\pm\sigma_1,\ldots,
    \pm\sigma_n,\underset{m-n}{0,...,0}\}$ the spectrum of $S_A$, suppose
    $\sigma_1,\sigma_2,\ldots,\sigma_{n_{sv}}\in [a,b]$
    with $\sigma_1,\ldots,\sigma_r\in (a,b)$ and
    the $n_{sv}-r$ ones $\sigma_{r+1},\ldots, \sigma_{n_{sv}}$ equal to
    $a$ or $b$,
    and label $\gamma_i:=\phi_d(l(\sigma_i))$, $i=1,2,\dots,n_{sv}$
    in decreasing order.
    Write the complementary set $\mathcal{L}_{n_{sv}}^c=\mathcal{L}\setminus\{\sigma_1,\ldots,\sigma_{n_{sv}}\}$,
    and label the eigenvalues $\gamma ={\phi_d(l(\lambda))}$ of $P$ for $\lambda\in\mathcal{L}_{n_{sv}}^c$ as
    $\gamma_{n_{sv}+1}\geq \gamma_{n_{sv}+2}\geq\cdots\geq\gamma_{m+n}.$
    Then if
    \begin{equation}\label{dsize2}
        d>\frac{\sqrt[3]{2}\pi^2}{\Delta_{\min}^{4/3}}-2,
    \end{equation}
    it holds that
    \begin{align}
      &  \|P_{S_A}-P\|<\frac{1}{4},\label{errorest}\\
      1\geq \gamma_1 \geq  \cdots \geq \gamma_r
        >\frac{3}{4}> \gamma_{r+1}&\geq \cdots \geq\gamma_{n_{sv}}
        >\frac{1}{4}>\gamma_{n_{sv}+1}\geq \cdots \geq\gamma_{m+n}\geq 0. \label{evtildep}
    \end{align}
\end{theorem}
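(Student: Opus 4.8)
The plan is to derive everything from the pointwise convergence estimates in \Cref{lem:pointwise convergence} together with the spectral decomposition \eqref{aeigen} of $S_A$. First I would observe that since $P_{S_A}=f(S_A)=\phi_d(l(S_A))$ composed with the exact step function and $P=\phi_d(l(S_A))$ share the common eigenvector matrix $Q$, the difference $P_{S_A}-P$ is diagonalized by $Q$ and hence $\|P_{S_A}-P\|=\max_{\lambda\in\mathcal{L}}|f(\lambda)-\phi_d(l(\lambda))| = \max_{\lambda\in\mathcal{L}}|h(l(\lambda))-\phi_d(l(\lambda))|$. Writing $\theta=\arccos(l(\lambda))$ and using $g,q_d$ from \eqref{gdef}--\eqref{qdef}, this is $\max_\lambda |q_d(\theta)-g(\theta)|$. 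Now \Cref{lem:pointwise convergence} bounds each such term by $\tfrac{\pi^6}{2(d+2)^3\Delta_\theta^4}$ (with the slightly different but no larger bounds at the two endpoints $\alpha,\beta$), where $\Delta_\theta=\min\{|\theta-\alpha|,|\theta-\beta|\}$. Since $x\mapsto\arccos(l(x))$ is monotone, among all eigenvalues of $S_A$ the quantity $\Delta_\theta$ is minimized at the singular values nearest $a$ or $b$ from inside or outside, i.e.\ at $\sigma_{il},\sigma_{ir},\sigma_{ol},\sigma_{or}$; the eigenvalues $-\sigma_i$ and the zero eigenvalues lie far from $[\alpha,\beta]$ in $\theta$-space and give strictly smaller error contributions, so they do not affect the maximum. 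Hence $\|P_{S_A}-P\|\le \tfrac{\pi^6}{2(d+2)^3\Delta_{\min}^4}$, which is \eqref{Accuracy of projector2}.

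For the second part, \eqref{errorest} is immediate: the hypothesis \eqref{dsize2} is exactly the rearrangement of $\tfrac{\pi^6}{2(d+2)^3\Delta_{\min}^4}<\tfrac14$, so \eqref{Accuracy of projector2} gives $\|P_{S_A}-P\|<\tfrac14$. For the eigenvalue ordering \eqref{evtildep}, I would argue pointwise in $x$: the eigenvalues of $P$ are the numbers $\phi_d(l(\lambda))$ for $\lambda\in\mathcal{L}$, and $|\phi_d(l(\lambda))-f(\lambda)|\le\|P_{S_A}-P\|<\tfrac14$. When $\lambda=\sigma_i\in(a,b)$ we have $f(\lambda)=1$, so $\gamma_i=\phi_d(l(\sigma_i))>\tfrac34$; when $\lambda=\sigma_i\in\{a,b\}$ we have $f(\lambda)=\tfrac12$, so $\tfrac14<\gamma_i<\tfrac34$; and when $\lambda\in\mathcal{L}_{n_{sv}}^c$ (so $\lambda\notin[a,b]$, hence $f(\lambda)=0$, recalling $a>0$ so the zero and negative eigenvalues are excluded from $[a,b]$), we get $\phi_d(l(\lambda))<\tfrac14$. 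Together with \Cref{lem:Nonnegative} giving $\phi_d\in[0,1]$, and the agreed decreasing labeling within each group, this yields the full chain \eqref{evtildep}.

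The one point that needs a little care is justifying that $\Delta_{\min}$ really is realized among the four listed singular values and that the endpoint estimates at $\alpha,\beta$ in \Cref{lem:pointwise convergence} are also dominated by $\tfrac{\pi^6}{2(d+2)^3\Delta_{\min}^4}$: for instance $|q_d(\alpha)-g(\alpha)|\le\tfrac{\pi^6}{2(d+2)^3}\max\{\tfrac{1}{(2\pi-2\alpha)^4},\tfrac{1}{(\alpha-\beta)^4}\}$, and one checks that both $2\pi-2\alpha$ and $\alpha-\beta$ are at least $\Delta_{\min}$ by the geometry of $[\beta,\alpha]\subset[0,\pi]$ and the positions of $\sigma_{ol},\sigma_{ir}$; similarly at $\beta$. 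This is the main (though minor) obstacle — essentially a bookkeeping argument about which $\theta$-distances can be small — after which everything reduces to monotonicity of $\arccos(l(\cdot))$ and the pointwise bound. The remaining steps are routine: reducing the operator norm of a difference of functions of the same matrix to the max over the spectrum, and reading off the sign/size of each eigenvalue from its distance to $f$.
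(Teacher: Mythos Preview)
Your proposal is correct and follows essentially the same route as the paper: reduce $\|P_{S_A}-P\|$ to a maximum over the spectrum via the common eigenbasis $Q$, apply the pointwise bound of \Cref{lem:pointwise convergence}, and then read off \eqref{evtildep} from $\|P_{S_A}-P\|<\tfrac14$ together with \Cref{lem:Nonnegative}. The paper compresses your endpoint bookkeeping into the single observation $\Delta_{\min}\le\min\{2\pi-2\alpha,\alpha-\beta,2\beta\}$, and it does not separately discuss the negative and zero eigenvalues (they are absorbed into the same inequality), but otherwise the arguments coincide.
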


\begin{proof}
    Note that the eigenvalues of $P_{S_A}$ are
    \begin{displaymath}
        \begin{cases}
            f(\sigma_{i})=h(l(\sigma_{i}))=1, \quad \sigma_{i}\in (a,b),                      \\
            f(\sigma_{i})=h(l(\sigma_{i}))=\frac{1}{2}, \quad \sigma_{i}
            \in \{a,b\}, \\
            f(\lambda)=h(l(\lambda))=0, \quad \lambda\in\mathcal{L}_{n_{sv}}^c.
        \end{cases}
    \end{displaymath}
    Then we obtain
    \begin{align*}
        \|P_{S_A}-P\| &=\|f(S_A)-\phi_d(l(S_A)) \|\\
        &=\max\{\max_{i=1,2,\ldots,n_{sv}}|h(l(\sigma_i))-\phi_d(l(\sigma_i))|,
        \max_{\lambda\in\mathcal{L}_{n_{sv}}^c}|\phi_d(l(\lambda))|\}\\
        &=\max\{\max_{i=1,2,\ldots,n_{sv}}|h(\cos(\theta_i))-\phi_d(\cos(\theta_i))|,
        \max_{\theta}|\phi_d(l(\theta))|\},
    \end{align*}
    where $\theta_i=\arccos(l(\sigma_i)),\ i=1,2,\ldots,n_{sv}$ and $\theta=\arccos(l(\lambda))$
    for $\lambda\in\mathcal{L}_{n_{sv}}^c$.
    Since
    \begin{displaymath}
        \Delta_{\min} \leq {\min}\{2\pi-2\alpha,\alpha-\beta,2\beta\},
    \end{displaymath}
    it follows from \cref{lem:pointwise convergence} that \eqref{Accuracy of projector2} holds.
    It is straightforward from \eqref{Accuracy of projector2}
    that if $d$ satisfies \eqref{dsize2} then \eqref{errorest} holds.

Since all $\gamma_i\in [0,1],\,i=1,2,\ldots,m+n$, we have
    \begin{displaymath}
        \|P_{S_A}-P\|=\max\biggl\{\max_{\sigma_{i}\in (a,b)}1-\gamma_i,
        \max_{\sigma_{i}\in\{ a, b\}}\left|\frac{1}{2}-\gamma_i\right|,
        \gamma_{n_{sv}+1}\biggr\}.
    \end{displaymath}
    which, together with \eqref{errorest}, shows that
    \begin{align*}
        0\leq 1-\gamma_i&<\frac{1}{4},\ \sigma_{i}\in (a,b),\\
        \left|\frac{1}{2}-\gamma_i\right|&<\frac{1}{4},\ \sigma_{i}\in \{a, b\},\\
        0\leq \gamma_{n_{sv}+1}&<\frac{1}{4}.
    \end{align*}
    With the labeling order of $\gamma_i, \ i=1,2,\ldots,m+n$,
    the above proves \eqref{evtildep}.
\end{proof}

\begin{remark}
    If neither of $a$ and $b$ are singular values of $A$,
    the dominant eigenvalues $\gamma_1,\ldots,\gamma_{n_{sv}}$ of $P_{S_A}$ correspond to the desired
    $\sigma_1,\ldots,\sigma_{n_{sv}}$,
    provided $\|P_{S_A}-P\|<1/2$.
\end{remark}

\subsection{The detailed CJ-FEAST SVDsolverA}
Suppose that we have determined the approximate spectral projector $P$ by \eqref{tildep}
and the subspace dimension $p\geq n_{sv}$
by the estimation approach in \cite{jia2022afeastsvdsolver}.
We apply \cref{alg:subspace iteration} to $P$, form an
approximate eigenspace of $P$ associated with its $p$ dominant eigenvalues,
and compute its orthogonal basis at each iteration.
We then take upper and lower parts of the basis to form the right
and left searching subspaces $\mathcal{V}^{(k)}$ and $\mathcal{U}^{(k)}$,
compute their orthonormal base by the thin QR decompositions,
and project $A$ onto them to compute the Ritz approximations
$(\tilde{\sigma}_{i}^{(k)},\tilde{u}_{i}^{(k)},\tilde{v}_{i}^{(k)})$
to the desired singular triplets
$(\sigma_{i},u_{i},v_{i})$, $i=1,2,\dots,n_{sv}$.
We describe the procedure as \cref{alg:Augmented-PSVD}.

\begin{algorithm}
    \caption{The CJ-FEAST SVDsolverA}
    \label{alg:Augmented-PSVD}
    \begin{algorithmic}[1]
        \REQUIRE{The interval $[a,b]$,  $c_j,
        \rho_{j,d}, j=0,\dots,d, \eta, p$, and an $(m+n)$-by-$p$ orthonormal $\tilde{Q}^{(0)}\in
        \mathbb{R}^{(m+n)\times p}$ with $p\geq n_{sv}$.}
        \ENSURE{The $n_{sv}$ converged Ritz triplets $(\tilde{\sigma}_i^{(k)}, \tilde{u}_i^{(k)},
        \tilde{v}_i^{(k)})$ with $\tilde{\sigma}_i^{(k)}\in [a,b]$.}
        \FOR{$k=1,2,\dots,$}
            \STATE{Subspace iteration: $S^{(k)}=P\tilde{Q}^{(k-1)}
            =\sum_{j=0}^d \rho_{j,d}c_j T_j(l(S_A))\tilde{Q}^{(k-1)}$.}
            \STATE{Compute the QR decomposition:
            $S^{(k)}=\tilde{Q}^{(k)}R^{(k)}$, and set
                    $Y^{(k)}=[I_n,0]\tilde{Q}^{(k)} $ and $
                    Z^{(k)}=[0,I_m]\tilde{Q}^{(k)}$.  }
            \STATE{Compute the QR decompositions:
            $Y^{(k)}=Q_{1}^{(k)}R_{1}^{(k)}$ and
            $Z^{(k)}=Q_{2}^{(k)}R_{2}^{(k)}$,
                    and take $\mathcal{V}^{(k)}={\mathrm{span}}\{Q_1^{(k)}\}$ and
                    $\mathcal{U}^{(k)}={\mathrm{span}}\{Q_2^{(k)}\}$.}
            \STATE{Compute the projection matrix:
            $\bar{A}^{(k)}=(Q_{2}^{(k)})^T A Q_{1}^{(k)}$.}
            \STATE{Compute the SVD:
            $\bar{A}^{(k)}=\bar{U}^{(k)}\tilde{\Sigma}^{(k)}(\bar{V}^{(k)})^T$
            with $\tilde{\Sigma}^{(k)} = \diag(\tilde{\sigma}_1^{(k)},\dots,\tilde{\sigma}_p^{(k)})$.}
            \STATE{Form $\tilde{U}^{(k)}=Q_{2}^{(k)}\bar{U}^{(k)}$ and
            $\tilde{V}^{(k)}=Q_{1}^{(k)}\bar{V}^{(k)}$.}
            \STATE{Select those $\tilde{\sigma}_i^{(k)}\in [a,b]$, compute the
            residual norms of the Ritz approximations $(\tilde{\sigma}_i^{(k)},
            \tilde{u}_i^{(k)},\tilde{v}_i^{(k)})$ with
            $\tilde{u}_i^{(k)}=\tilde{U}^{(k)}e_i$ and
            $\tilde{v}_i^{(k)}=\tilde{V}^{(k)}e_i$, and test convergence.}
        \ENDFOR
    \end{algorithmic}
\end{algorithm}

Next we briefly count the computational cost of one iteration of
\cref{alg:Augmented-PSVD}.
Keep in mind that the computation of $Ax$ or $A^Ty$ is
one matrix-vector product, abbreviated as MV, for given vectors $x$ and $y$.

The matrix-vector product
$S_A z$ costs two MVs for a given vector $z$:
\begin{displaymath}
    \begin{bmatrix}
        \text{\large 0} & A^T\\
        A & \text{\large 0}
    \end{bmatrix}\begin{bmatrix}
        x\\
        y
    \end{bmatrix}=\begin{bmatrix}
        A^Ty\\
        Ax
    \end{bmatrix}.
\end{displaymath}
Exploiting the three-term recurrence of Chebyshev polynomials shows that
computing $T_{1}(l(S_A)) z$ requires two MVs and $m+n$ flops and
computing $T_{j}(l(S_A))z$ needs two MVs and $2(m+n)$ flops for $j=2,\ldots,d$.
Suppose that the QR decompositions at steps 3--4 are
computed by the Gram--Schmidt procedure with
reorthogonalization, and the Matlab built-in function
{\sffamily svd}, is used to compute the SVD in step 6 of \cref{alg:Augmented-PSVD}.
We can routinely count the cost of other steps.
The cost of one iteration of \cref{alg:Augmented-PSVD} and that of
the CJ-FEAST SVDsolverC are displayed in \cref{table:Computational cost},
which indicates that,
for the same subspace dimension $p$ and the series degree $d$,
the MVs consumed by \cref{alg:Augmented-PSVD} are approximately equal
to those by the CJ-FEAST SVDsolverC and \cref{alg:Augmented-PSVD} consumes more flops than the CJ-FEAST SVDsolverC.

\begin{table}[htbp]
    \begin{center}
        \resizebox*{\textwidth}{!}{
        \begin{tabular}{|c|c|c|}
            \hline
            Solvers      & MVs      & Flops                    \\
            \hline
            CJ-FEAST SVDsolverA & $(2d+3)p$ & $4(m+n)pd+(8m+6n)p^2+21p^3+2(m+n)p$ \\
            CJ-FEAST SVDsolverC & $2(d+1)p$ & $4npd+4(m+n)p^2+21p^3+2np$ \\
            \hline
        \end{tabular}
        }
        \caption{Computational cost of one iteration of the two SVDsolvers.}
        \label{table:Computational cost}
    \end{center}
\end{table}

\section{The convergence of the CJ-FEAST SVDsolverA}\label{sec:conver}
Suppose that $p\geq n_{sv}$ and the series degree $d$ is large enough so that \eqref{errorest} and \eqref{evtildep} holds.
Since \cref{alg:Augmented-PSVD} generates the subspaces
\begin{displaymath}
    {\mathrm{span}}\{\tilde{Q}^{(k)}\}={\mathrm{span}}\{S^{(k)}\}=P{\mathrm{span}}\{\tilde{Q}^{(k-1)}\},
\end{displaymath}
we inductively obtain
\begin{equation}\label{subA}
    {\mathrm{span}}\{\tilde{Q}^{(k)}\}=P^k{\mathrm{span}}\{\tilde{Q}^{(0)}\}.
\end{equation}

Recall from \Cref{thm:accuracyps2} that the eigenvalues of $P$ are
$\gamma_i=\phi_d(l(\sigma_i)),\ i=1,2,\ldots,n_{sv}$ and
$\gamma_i=\phi_d(l(\lambda_i)),\ i=n_{sv}+1,\dots,m+n$
and they are labeled in decreasing order.
Suppose that $d$ is large enough for which
$\lambda_{n_{sv}+1},\dots,\lambda_{p}$
are positive, that is, $\lambda_{n_{sv}+1},\dots,\lambda_{p}$ are the singular
values $\sigma_{n_{sv}+1},\dots,\sigma_{p}$ of $A$.
Let $q_i$ be column $i$ of the eigenvector matrix $Q$ of $P$
with the eigenvalues $\gamma_i,\ i=1,2,\ldots,m+n$. Then
the matrix $[q_1,q_2,\ldots,q_{m+n}]$ permutes the columns of $Q$ in
\eqref{Qdef},
in which its {\em first} $p$ columns are some $p$ ones of the first $n$
columns of $Q$ in \eqref{Qdef} but the latter $m+n-p$ columns do not have the
corresponding structure in \eqref{Qdef} and the corresponding
eigenvalues are $\lambda_{p+1},\ldots \lambda_{m+n}\in \mathcal{L}\setminus
\{\sigma_1,\ldots,\sigma_p\}$.

Now we set up the following notation:
\begin{align*}
    &Q_p=[q_1,\dots, q_p],\   \  Q_{p,\perp}=[q_{p+1},\dots, q_{m+n}], \\
    &\Gamma_{p}=\diag( \gamma_1, \dots, \gamma_p), \ \ \Gamma_p^{\prime}=\diag(\gamma_{p+1},\ldots,\gamma_{m+n}),  \\
    &\Sigma_{p}=\diag(\sigma_1, \dots,\sigma_p), \  \ \Sigma_{p}^{\prime}=\diag(\lambda_{p+1},\ldots,\lambda_{m+n}),\\
    &V_p=[v_1,\dots, v_p],\   \  U_p=[u_1,\dots, u_p], \\
    &V=[V_p,V_{p,\perp}],\ \ U=[U_p,U_{p,\perp}].
\end{align*}
To establish the convergence of \cref{alg:Augmented-PSVD},
we need the following two lemmas.

\begin{lemma}
    Suppose that
        $W=[\underset{p}{W_1}|\underset{N-p}{W_2}]$ and $Z=[\underset{p}{Z_1}|\underset{N-p}{Z_2}]$
    are $N\times N$ orthogonal matrices.
    Let $\mathcal{S}_1={\mathrm{span}}\{W_1\}$ and $\mathcal{S}_2={\mathrm{span}}\{Z_1\}$.
    Then the distance ${\mathrm{dist}}(\mathcal{S}_1, \mathcal{S}_2)$
    between $\mathcal{S}_1$ and $\mathcal{S}_2$ (cf. \cite[section 2.5.3]{golub2013matrix})
   satisfies
    \begin{equation}\label{dist equation}
        {\mathrm{dist}}(\mathcal{S}_1, \mathcal{S}_2)
        = \underset{X\in \mathbb{R}^{p\times p}}{\min}\| W_1-Z_1X\|.
    \end{equation}
\end{lemma}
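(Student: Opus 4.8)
The plan is to establish \eqref{dist equation} by recalling the standard characterization of the distance between two equidimensional subspaces and then rewriting it as a minimization over square matrices. Recall that for $N\times N$ orthogonal matrices with $W=[W_1\,|\,W_2]$ and $Z=[Z_1\,|\,Z_2]$, the distance between $\mathcal{S}_1={\mathrm{span}}\{W_1\}$ and $\mathcal{S}_2={\mathrm{span}}\{Z_1\}$ is, by \cite[section 2.5.3]{golub2013matrix}, ${\mathrm{dist}}(\mathcal{S}_1,\mathcal{S}_2)=\|W_1^TZ_2\|=\|Z_2^TW_1\|=\|(I-Z_1Z_1^T)W_1\|$, the last equality holding because $Z_2Z_2^T=I-Z_1Z_1^T$ is an orthogonal projector and $\|Z_2Z_2^TW_1\|=\|Z_2^TW_1\|$ since $Z_2$ has orthonormal columns.

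The remaining step is to show $\|(I-Z_1Z_1^T)W_1\|=\min_{X\in\mathbb{R}^{p\times p}}\|W_1-Z_1X\|$. For any $X$, decompose $W_1-Z_1X=(I-Z_1Z_1^T)W_1+Z_1(Z_1^TW_1-X)$; the two terms have ranges lying in orthogonal subspaces (the column space of $Z_1$ and its orthogonal complement), so the orthogonality means $\|W_1-Z_1X\|^2\geq\|(I-Z_1Z_1^T)W_1\|^2$ for the spectral norm as well — more carefully, writing any unit vector $y$ and using $(I-Z_1Z_1^T)W_1 y\perp Z_1(Z_1^TW_1-X)y$, Pythagoras gives $\|(W_1-Z_1X)y\|^2=\|(I-Z_1Z_1^T)W_1y\|^2+\|Z_1(Z_1^TW_1-X)y\|^2\geq\|(I-Z_1Z_1^T)W_1y\|^2$, hence taking the supremum over $y$ yields $\|W_1-Z_1X\|\geq\|(I-Z_1Z_1^T)W_1\|$. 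Choosing $X=Z_1^TW_1$ makes the second term vanish and attains the lower bound, which proves the claimed identity and completes the proof.

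I do not anticipate a serious obstacle here; the only mild subtlety is that the Pythagorean argument must be applied vectorwise before taking the operator norm (one cannot simply add squared operator norms), and one should note the infimum is actually attained, so ``$\min$'' is justified. Everything else is bookkeeping with orthogonal projectors and the fact that premultiplication by a matrix with orthonormal columns preserves the $2$-norm.
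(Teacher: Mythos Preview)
Your proof is correct and essentially the same as the paper's. The paper premultiplies by the orthogonal matrix $Z^T$ to obtain
\[
\min_{X}\|W_1-Z_1X\|=\min_{X}\left\|\begin{bmatrix} Z_1^TW_1-X\\ Z_2^TW_1\end{bmatrix}\right\|=\|Z_2^TW_1\|,
\]
while you work directly with the projector $I-Z_1Z_1^T=Z_2Z_2^T$ and a vectorwise Pythagorean argument; both routes identify the same minimizer $X=Z_1^TW_1$ and the same residual $\|Z_2^TW_1\|$.
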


\begin{proof}
We have
    \begin{align*}
        \underset{X\in \mathbb{R}^{p\times p}}{\min}\| W_1-Z_1X\|&=\underset{X\in \mathbb{R}^{p\times p}}{\min}\left\|Z^T( W_1-Z_1X)\right\|\\
                                                        &=\underset{X\in \mathbb{R}^{p\times p}}{\min}\left\|\begin{bmatrix}
                                                                Z_1^TW_1 -X \\
                                                                Z_2^TW_1
                                                            \end{bmatrix}\right\|\\
                                                        &=\|Z_2^TW_1\|=
                                                        {\mathrm{dist}}(\mathcal{S}_1, \mathcal{S}_2).
    \end{align*}
\end{proof}

\begin{lemma}\label{lem:dissub}
    Suppose that $\tilde{Y},Y\in \mathbb{R}^{n\times p}$ and $\tilde{Z},Z \in \mathbb{R}^{m\times p}$ with $m,n>p$ are of full column rank, and
    $[\tilde{Y}^T,\tilde{Z}^T]^T$ and $[Y^T,Z^T]^T$ are column orthonormal. Then
    {\small
    \begin{align}
        {\mathrm{dist}}({\mathrm{span}}\{\tilde{Y}\}, {\mathrm{span}}\{Y\}) &\leq \sqrt{1+\sigma^2_{\max}(\{\tilde{Y},\tilde{Z}\})}
        \ {\mathrm{dist}}({\mathrm{span}}\{\begin{bmatrix}
            \tilde{Y}\\
            \tilde{Z}
        \end{bmatrix}\}, {\mathrm{span}}\{\begin{bmatrix}
            Y \\
            Z
        \end{bmatrix}\})\label{distV}, \\
        {\mathrm{dist}}({\mathrm{span}}\{\tilde{Z}\}, {\mathrm{span}}\{Z\}) &\leq \sqrt{1+\sigma^2_{\max}(\{\tilde{Z},\tilde{Y}\})}
        \ {\mathrm{dist}}({\mathrm{span}}\{\begin{bmatrix}
            \tilde{Y}\\
            \tilde{Z}
        \end{bmatrix}\}, {\mathrm{span}}\{\begin{bmatrix}
            Y \\
            Z
        \end{bmatrix}\})\label{distU},
    \end{align}
    }
    where $\sigma_{\max}(\{\tilde{Y},\tilde{Z}\})$ is the largest generalized singular value of
    the matrix pair $\{\tilde{Y},\tilde{Z}\}$.
\end{lemma}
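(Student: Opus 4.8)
The plan is to prove \eqref{distV}; the bound \eqref{distU} follows by the symmetric argument with the roles of the upper and lower blocks interchanged. Write $\tilde{X}=[\tilde{Y}^T,\tilde{Z}^T]^T$ and $X=[Y^T,Z^T]^T$, both column orthonormal in $\mathbb{R}^{(m+n)\times p}$, and let $X_\perp=[Y_\perp^T,Z_\perp^T]^T$ complete $X$ to an orthogonal $(m+n)\times(m+n)$ matrix, partitioned conformally with the upper/lower block structure. By the previous lemma (the variational characterization of the subspace distance), there exists a matrix $G\in\mathbb{R}^{p\times p}$ attaining ${\mathrm{dist}}({\mathrm{span}}\{\tilde{X}\},{\mathrm{span}}\{X\})=\|\tilde{X}-XG\|=\|X_\perp^T\tilde{X}\|$; equivalently $\tilde{X}=XG+X_\perp H$ with $\|H\|=\|X_\perp^T\tilde{X}\|=:\delta$ the distance on the right-hand side. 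Reading off the upper block gives $\tilde{Y}=YG+Y_\perp H$.

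The key step is then to bound ${\mathrm{dist}}({\mathrm{span}}\{\tilde{Y}\},{\mathrm{span}}\{Y\})$ using the previous lemma again: this distance equals $\min_{F}\|\tilde{Y}_o - Y_o F\|$ where $\tilde{Y}_o,Y_o$ are orthonormal bases of ${\mathrm{span}}\{\tilde{Y}\}$, ${\mathrm{span}}\{Y\}$. Since $\tilde Y$ and $Y$ need not have orthonormal columns, I would instead work directly with the projector form: ${\mathrm{dist}}({\mathrm{span}}\{\tilde Y\},{\mathrm{span}}\{Y\}) = \|(I-YY^\dagger)\,\tilde Y(\tilde Y^T\tilde Y)^{-1/2}\|$, or more conveniently bound it by $\|(I-\Pi_Y)\tilde Y\|\cdot\|(\tilde Y^T\tilde Y)^{-1/2}\|$ where $\Pi_Y$ is the orthogonal projector onto ${\mathrm{span}}\{Y\}$. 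From $\tilde Y = YG + Y_\perp H$ we get $(I-\Pi_Y)\tilde Y = (I-\Pi_Y)Y_\perp H$, whose norm is at most $\|H\|=\delta$. For the second factor, from $\tilde X^T\tilde X = I$ we have $\tilde Y^T\tilde Y + \tilde Z^T\tilde Z = I$, hence $\tilde Y^T\tilde Y = I - \tilde Z^T\tilde Z \succeq 0$ and $\|(\tilde Y^T\tilde Y)^{-1/2}\| = (\sigma_{\min}(\tilde Y))^{-1}$. The generalized singular values of the pair $\{\tilde Y,\tilde Z\}$ are exactly the quantities $\sigma_i/\sqrt{1+\sigma_i^2}$-type ratios, and one checks that $\sigma_{\max}^2(\{\tilde Y,\tilde Z\}) = \|\tilde Z(\tilde Y^T\tilde Y)^{-1/2}\|^2$; combined with $\tilde Y^T\tilde Y = I-\tilde Z^T\tilde Z$ this yields $\sigma_{\min}^2(\tilde Y)^{-1} = 1+\sigma_{\max}^2(\{\tilde Y,\tilde Z\})$. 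Multiplying the two factor bounds gives \eqref{distV}.

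I expect the main obstacle to be the bookkeeping around the generalized singular values: one must be careful about the definition of $\sigma_{\max}(\{\tilde Y,\tilde Z\})$ (the paper uses the convention where the GSVD of $\{\tilde Y,\tilde Z\}$ produces pairs $(c_i,s_i)$ with $c_i^2+s_i^2=1$, and $\sigma_i = s_i/c_i$ when both blocks have full column rank, so that $\sigma_{\max}=s_{\max}/c_{\min}$), and to verify cleanly the identity $1+\sigma_{\max}^2(\{\tilde Y,\tilde Z\}) = \sigma_{\min}^{-2}(\tilde Y)$ using only the orthonormality constraint $\tilde Y^T\tilde Y+\tilde Z^T\tilde Z=I$. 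The inequality direction (rather than equality) in the statement leaves a little slack, so even a slightly lossy version of this identity suffices. Everything else — the variational lemma, the splitting $\tilde Y=YG+Y_\perp H$, and the projector estimate $\|(I-\Pi_Y)\tilde Y\|\le\delta$ — is routine once the right orthogonal completions are set up.
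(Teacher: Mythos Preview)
Your argument is correct and takes a genuinely different route from the paper's. The paper works on the $\tilde X$ side: it invokes the compact GSVD of $\{\tilde Y,\tilde Z\}$ to produce an explicit orthonormal basis $[GS;\,WC]$ for ${\mathrm{span}}\{\tilde X\}$, applies the variational lemma \eqref{dist equation} to this basis, drops the lower block to obtain $\min_E\|GS-YE\|\ge\sigma_{\min}(S)\cdot\min_E\|G-FE\|$ (after QR of $Y$), and finally observes $\sigma_{\min}(S)=\beta_1=(1+\sigma_{\max}^2(\{\tilde Y,\tilde Z\}))^{-1/2}$. You instead work on the $X$ side: expand $\tilde X=XG+X_\perp H$ with $\|H\|=\delta$, read off the top block $\tilde Y=YG+Y_\perp H$, and bound the distance by $\|(I-\Pi_Y)\tilde Y\|\cdot\|(\tilde Y^T\tilde Y)^{-1/2}\|\le\delta/\sigma_{\min}(\tilde Y)$, then identify $1/\sigma_{\min}(\tilde Y)$ with the GSV factor via $\tilde Y^T\tilde Y+\tilde Z^T\tilde Z=I$. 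Both arguments ultimately rely on the same identity $\sigma_{\min}(\tilde Y)^{-2}=1+\sigma_{\max}^2(\{\tilde Y,\tilde Z\})$, which in this orthonormal setting is immediate since the shared right factor $X$ in the GSVD is orthogonal. Your approach is slightly more economical in that it avoids writing out the GSVD explicitly and uses only the projector formula and submultiplicativity; the paper's approach makes the appearance of the factor $\beta_1$ more transparent by exhibiting it as $\sigma_{\min}(S)$ directly in the block estimate. Your flagged caveat about the GSV convention is well placed: in the paper the pair $\{\tilde Y,\tilde Z\}$ is decomposed as $\tilde Y=GSX^{-1}$, $\tilde Z=WCX^{-1}$ with $\sigma_{\max}(\{\tilde Y,\tilde Z\})=\alpha_1/\beta_1$, so your $c_i$ correspond to the $\beta_i$ attached to $\tilde Y$.
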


\begin{proof}
Under the assumption, both $[\tilde{Y}^T,\tilde{Z}^T]^T$ and $[Y^T,Z^T]^T$
have rank $p$. Therefore, they span two subspaces with equal dimension.
According to \cite[Theorem 6.1.1]{golub2013matrix}, by the assumption on
$\tilde Y$ and $\tilde Z$, the compact generalized singular value decomposition
of the matrix pair $\{\tilde Y,\tilde Z\}$ is as follows:
There exist two column orthonormal matrices $W\in \mathbb{R}^{m\times p}$,
$G\in \mathbb{R}^{n\times p}$,
a nonsingular matrix $X\in \mathbb{R}^{p\times p}$,
    and two diagonal matrices $
        C = \diag\{\alpha_1,\dots,\alpha_p \}$ and $S =
        \diag\{\beta_1,\dots,\beta_p \}
        $
    such that
    \begin{align*}
        \tilde{Z}  = W C X^{-1} , \qquad  \tilde{Y} = GS X^{-1},\qquad C^2+S^2=I_p,\\
        1>\alpha_1\geq \alpha_2\geq \cdots \geq \alpha_p > 0, \ 0<\beta_1\leq \beta_2\leq \cdots \leq \beta_p < 1.
    \end{align*}
    Therefore, we have
    \begin{equation}\label{subeq}
    {\mathrm{span}}\{\begin{bmatrix}
        \tilde{Y} \\
        \tilde{Z}
    \end{bmatrix}\}={\mathrm{span}}\{\begin{bmatrix}
        G S  \\
        W C
    \end{bmatrix}\}.
    \end{equation}
    Since $\begin{bmatrix}
                G S  \\
                W C
            \end{bmatrix}$
    is column orthonormal,
    in terms of \eqref{dist equation} and \eqref{subeq}, we have
    \begin{align*}
        {\mathrm{dist}}({\mathrm{span}}\{\begin{bmatrix}
            G S \\
            W C
        \end{bmatrix}\}, {\mathrm{span}}\{\begin{bmatrix}
            Y \\
            Z
        \end{bmatrix}\}) &= \underset{E\in \mathbb{R}^{p\times p}}{\min}\left\| \begin{bmatrix}
            G S \\
            W C
        \end{bmatrix}-\begin{bmatrix}
            Y \\
            Z
        \end{bmatrix} E \right\| \\
        &= \underset{E\in \mathbb{R}^{p\times p}}{\min}\left\| \begin{bmatrix}
            G S-YE \\
            W C-ZE
        \end{bmatrix} \right\| \\
        &\geq \underset{E\in \mathbb{R}^{p\times p}}{\min} \left\| G S -Y E \right\|  \\
        &=\underset{E\in \mathbb{R}^{p\times p}}{\min} \left\| G-Y E S^{-1} \right\| \sigma_{\min}(S).
    \end{align*}
    Let $Y=FR$ be the QR decomposition of $Y$.
    Then
    \begin{align*}
        {\mathrm{dist}}({\mathrm{span}}\{\begin{bmatrix}
            G S \\
            W C
        \end{bmatrix}\}, {\mathrm{span}}\{\begin{bmatrix}
            Y \\
            Z
        \end{bmatrix}\}) &\geq\underset{E\in \mathbb{R}^{p\times p}}{\min} \left\| G-FR E S^{-1} \right\| \sigma_{\min}(S) \\
        &=\underset{E\in \mathbb{R}^{p\times p}}{\min} \left\| G-FE \right\| \sigma_{\min}(S) \\
        &={\mathrm{dist}}({\mathrm{span}}\{G\}, {\mathrm{span}}\{F\}) \beta_1,\\
        &={\mathrm{dist}}({\mathrm{span}}\{\tilde{Y}\}, {\mathrm{span}}\{Y\}) \beta_1.
    \end{align*}
    Since $\sqrt{1+\sigma^2_{\max}(\{\tilde{Y},\tilde{Z}\})} = \sqrt{1+(\frac{\alpha_1}{\beta_1})^2} = \frac{1}{\beta_1}$, the last relation
    proves \eqref{distV}.
    The proof of \eqref{distU} is analogous.
\end{proof}

\begin{remark}
    Exchange the positions of $\tilde{Y}$ and $Y$ and those of $\tilde{Z}$ and $Z$.
    The subspace distances in \eqref{distV} and \eqref{distU} remain the same,
    and we can obtain similar bounds, where
    $\sigma_{\max}(\{\tilde{Z},\tilde{Y}\})$ and $\sigma_{\max}(\{\tilde{Y},\tilde{Z}\})$ become
    $\sigma_{\max}(\{Z,Y\})$ and $\sigma_{\max}(\{Y,Z\})$, respectively.
    Therefore, we can replace the multiples in the two bounds by
    \begin{displaymath}
        \sqrt{1+\min\{\sigma^2_{\max}(\{\tilde{Y},\tilde{Z}\}),\sigma^2_{\max}(\{Y,Z\})\}},\
        \sqrt{1+\min\{\sigma^2_{\max}(\{\tilde{Z},\tilde{Y}\}),\sigma^2_{\max}(\{Z,Y\})\}}.
    \end{displaymath}
    This lemma generalizes \cite[Theorem 2.3]{jia2003implicitly},
    \cite[Lemma 2.3]{huang2013} and
    \cite[Lemma 3.1]{huangjia2021} from the one dimensional case to the general subspace case.
\end{remark}

Next we establish the convergence results on the approximate left
and right singular subspaces $\mathcal{U}^{(k)}$, $\mathcal{V}^{(k)}$ and the
Ritz values $\tilde \sigma_{i}^{(k)}$ obtained by \cref{alg:Augmented-PSVD}.

\begin{theorem}\label{thm:generalconver}
    Suppose that
    $\gamma_p>\gamma_{p+1}$ and $Q_p^T\tilde {Q}^{(0)}$
    is invertible. Then the subspaces \eqref{subA} generated by
    \cref{alg:Augmented-PSVD} are
    \begin{equation}\label{qk}
        \tilde Q^{(k)}=(Q_p+Q_{p,\perp}E^{(k)})(M^{(k)})^{-\frac{1}{2}}U^{(k)}
    \end{equation}
    with
    \begin{align}
        &E^{(k)}=\Gamma_p^{\prime k} Q_{p,\perp}^T\tilde{Q}^{(0)}(Q_p^T\tilde{Q}^{(0)})^{-1}\Gamma_{p}^{-k},\label{ek}\\
        &M^{(k)}=I+(E^{(k)})^T E^{(k)} \label{mkdef}
    \end{align}
    and $U^{(k)}$ being an orthogonal matrix; furthermore,
    \begin{equation}\label{normek}
        \|E^{(k)}\| \leq \biggl(\frac{\gamma_{p+1}}{\gamma_p}\biggr)^k\| E^{(0)}\|
    \end{equation}
    and the distance $\epsilon^{(k)}={\mathrm{dist}}({\mathrm{span}}\{\tilde{Q}^{(k)}\},{\mathrm{span}}\{Q_p\})$ satisfies
    \begin{equation}\label{qkdist}
        \epsilon^{(k)}=\frac{\| E^{(k)}\|}{\sqrt{1+\| E^{(k)}\|^2}} \leq  \biggl(\frac{\gamma_{p+1}}{\gamma_p}\biggr)^k\|E^{(0)}\|.
    \end{equation}
    Assume that $R_1^{(k)}$ and $R_2^{(k)}$ in Step 4 of \cref{alg:Augmented-PSVD} are nonsingular.
    Then the subspace distances
    \begin{align}
        &\epsilon_1^{(k)}:={\mathrm{dist}}({\mathrm{span}}\{V_p\},{\mathrm{span}}\{Q_1^{(k)}\}) \leq \sqrt{2}\epsilon^{(k)},\label{q1kdist}\\
        &\epsilon_2^{(k)}:={\mathrm{dist}}({\mathrm{span}}\{U_p\},{\mathrm{span}}\{Q_2^{(k)}\}) \leq \sqrt{2}\epsilon^{(k)}.\label{q2kdist}
    \end{align}
    Let $(\tilde \sigma_{i}^{(k)},\tilde u_{i}^{(k)},\tilde v_{i}^{(k)} )$
    be the $p$ Ritz approximations with
    $\tilde \sigma_{1}^{(k)}, \tilde \sigma_{2}^{(k)},\dots,\tilde
    \sigma_{p}^{(k)}$ labeled in the same order as
    $\sigma_1,\sigma_2,\dots,\sigma_p$.
    Then
    \begin{equation}\label{sigmauncond}
        |\tilde \sigma_{i}^{(k)}-\sigma_i| \leq \|A\| (6(\epsilon^{(k)})^2+4(\epsilon^{(k)})^4), \ i=1,2,...,p.
    \end{equation}
\end{theorem}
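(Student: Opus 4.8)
The plan is to prove the claims of \Cref{thm:generalconver} in the order they are stated, since each one feeds the next. First I would establish the representation \eqref{qk}. Writing the columns of $\tilde Q^{(0)}$ in the eigenbasis $Q=[Q_p,Q_{p,\perp}]$ of $P$, we have $\tilde Q^{(0)}=Q_pC_0+Q_{p,\perp}D_0$ with $C_0=Q_p^T\tilde Q^{(0)}$ invertible by hypothesis. Applying $P^k$ and using $PQ_p=Q_p\Gamma_p$, $PQ_{p,\perp}=Q_{p,\perp}\Gamma_p'$, we get $P^k\tilde Q^{(0)}=Q_p\Gamma_p^kC_0+Q_{p,\perp}\Gamma_p'^kD_0 = (Q_p+Q_{p,\perp}\Gamma_p'^kD_0C_0^{-1}\Gamma_p^{-k})\Gamma_p^kC_0$, which is a basis of ${\mathrm{span}}\{\tilde Q^{(k)}\}$ by \eqref{subA}. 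Orthonormalizing the left factor $Q_p+Q_{p,\perp}E^{(k)}$ (its columns are orthogonal since $Q_p,Q_{p,\perp}$ have orthonormal columns and are mutually orthogonal) contributes the factor $(M^{(k)})^{-1/2}$ with $M^{(k)}=I+(E^{(k)})^TE^{(k)}$, and any remaining change of basis to reach the specific $\tilde Q^{(k)}$ produced by the QR step is an orthogonal matrix $U^{(k)}$; this gives \eqref{qk}, \eqref{ek}, \eqref{mkdef}.

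Next, \eqref{normek} follows from submultiplicativity applied to \eqref{ek}: $\|E^{(k)}\|\le\|\Gamma_p'^k\|\,\|Q_{p,\perp}^T\tilde Q^{(0)}(Q_p^T\tilde Q^{(0)})^{-1}\|\,\|\Gamma_p^{-k}\|$, and since $\Gamma_p,\Gamma_p'$ are diagonal with entries ordered so that $\gamma_p>\gamma_{p+1}$, $\|\Gamma_p'^k\|=\gamma_{p+1}^k$ and $\|\Gamma_p^{-k}\|=\gamma_p^{-k}$; the middle factor, evaluated at $k=0$, is $\|E^{(0)}\|$ after noticing $\Gamma_p'^0=\Gamma_p^{-0}=I$. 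Then \eqref{qkdist}: by the lemma with \eqref{dist equation}, ${\mathrm{dist}}({\mathrm{span}}\{\tilde Q^{(k)}\},{\mathrm{span}}\{Q_p\})=\|Q_{p,\perp}^T\tilde Q^{(k)}\|$; inserting \eqref{qk} and using that $U^{(k)}$ is orthogonal and $Q_{p,\perp}^T(Q_p+Q_{p,\perp}E^{(k)})=E^{(k)}$ gives $\epsilon^{(k)}=\|E^{(k)}(M^{(k)})^{-1/2}\|$. Since $M^{(k)}=I+(E^{(k)})^TE^{(k)}$ and $E^{(k)}$ share singular directions, $\|E^{(k)}(M^{(k)})^{-1/2}\|=\sigma_{\max}(E^{(k)})/\sqrt{1+\sigma_{\max}^2(E^{(k)})}=\|E^{(k)}\|/\sqrt{1+\|E^{(k)}\|^2}$, and since $t/\sqrt{1+t^2}\le t$, combining with \eqref{normek} yields \eqref{qkdist}.

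For \eqref{q1kdist}--\eqref{q2kdist} I would invoke \Cref{lem:dissub}. Write $\tilde Q^{(k)}=[Y^{(k)T},Z^{(k)T}]^T$ with $Y^{(k)}=[I_n,0]\tilde Q^{(k)}$, $Z^{(k)}=[0,I_m]\tilde Q^{(k)}$, which is column orthonormal; likewise $Q_p=[\,(\cdot)\,,(\cdot)\,]^T$ has upper block spanning ${\mathrm{span}}\{V_p\}$ and lower block spanning ${\mathrm{span}}\{U_p\}$ — here one uses that the first $p$ columns $q_1,\dots,q_p$ of $Q$, being among the first $n$ columns of the matrix \eqref{Qdef}, have upper parts proportional to the $v_i$ and lower parts proportional to the $u_i$. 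Since $R_1^{(k)},R_2^{(k)}$ nonsingular means $Y^{(k)},Z^{(k)}$ have full column rank, \Cref{lem:dissub} applies and bounds ${\mathrm{dist}}({\mathrm{span}}\{Y^{(k)}\},{\mathrm{span}}\{V_p\})$ by $\sqrt{1+\sigma_{\max}^2(\{\,\cdot,\cdot\,\})}$ times $\epsilon^{(k)}$; the factor $\sqrt2$ comes from the fact that for $Q_p$ built from the columns \eqref{Qdef} the blocks are each $\tfrac1{\sqrt2}V_p$ and $\tfrac1{\sqrt2}U_p$ up to sign, so the relevant generalized singular value equals $1$, giving $\sqrt{1+1}=\sqrt2$. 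Finally \eqref{sigmauncond}: the Ritz values $\tilde\sigma_i^{(k)}$ are the singular values of $\bar A^{(k)}=(Q_2^{(k)})^TAQ_1^{(k)}$, and the exact $\sigma_i$ are singular values of $U_p^TAV_p$; a standard perturbation argument — writing $Q_1^{(k)}=V_pC_1+V_{p,\perp}S_1$, $Q_2^{(k)}=U_pC_2+U_{p,\perp}S_2$ with $\|S_1\|\le\epsilon_1^{(k)}$, $\|S_2\|\le\epsilon_2^{(k)}$ and expanding — produces a perturbation of norm $\mathcal{O}(\|A\|(\epsilon_1^{(k)}+\epsilon_2^{(k)})^2)$ with explicit constants, and substituting $\epsilon_1^{(k)},\epsilon_2^{(k)}\le\sqrt2\epsilon^{(k)}$ gives the stated $6(\epsilon^{(k)})^2+4(\epsilon^{(k)})^4$. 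I expect the main obstacle to be this last step: getting the quadratic-in-$\epsilon^{(k)}$ bound (rather than merely linear) requires carefully choosing the comparison matrix so the first-order terms vanish — essentially the Rayleigh–Ritz / singular-value analogue of the argument behind \eqref{sigmaerror}, adapted from \cite{jia2006} to the two-sided projection $(Q_2^{(k)})^TAQ_1^{(k)}$ — and tracking the constants $6$ and $4$ through the cross terms and fourth-order remainder.
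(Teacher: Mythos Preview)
Your proposal is correct and follows essentially the same route as the paper's proof: the eigenbasis decomposition for \eqref{qk}--\eqref{mkdef}, submultiplicativity for \eqref{normek}, the SVD of $E^{(k)}$ to evaluate $\|E^{(k)}(M^{(k)})^{-1/2}\|$ for \eqref{qkdist}, and \Cref{lem:dissub} applied with the $Q_p$-blocks $\tfrac{1}{\sqrt2}V_p,\tfrac{1}{\sqrt2}U_p$ (generalized singular value $=1$) for \eqref{q1kdist}--\eqref{q2kdist} are exactly what the paper does.

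For your flagged obstacle in \eqref{sigmauncond}, the paper's mechanism is the one you anticipate: it writes $Q_1^{(k)}=(V_p+V_{p,\perp}E_1^{(k)})(M_1^{(k)})^{-1/2}U_1^{(k)}$ and similarly for $Q_2^{(k)}$, then compares $U_2^{(k)}(Q_2^{(k)})^TAQ_1^{(k)}(U_1^{(k)})^T$ directly to $\Sigma_p$. The first-order cross terms $U_p^TAV_{p,\perp}$ and $U_{p,\perp}^TAV_p$ vanish by SVD orthogonality, leaving a second-order cross term bounded by $\|A\|\epsilon_1^{(k)}\epsilon_2^{(k)}\le 2\|A\|(\epsilon^{(k)})^2$, plus the diagonal piece $(M_2^{(k)})^{-1/2}\Sigma_p(M_1^{(k)})^{-1/2}-\Sigma_p$. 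For the latter the paper sets $F_i^{(k)}=I-(M_i^{(k)})^{-1/2}$, observes $\|F_i^{(k)}\|\le(\epsilon_i^{(k)})^2$, and expands $(I-F_2^{(k)})\Sigma_p(I-F_1^{(k)})-\Sigma_p$ to get $\|A\|\bigl((\epsilon_1^{(k)})^2+(\epsilon_2^{(k)})^2+(\epsilon_1^{(k)})^2(\epsilon_2^{(k)})^2\bigr)\le\|A\|(4(\epsilon^{(k)})^2+4(\epsilon^{(k)})^4)$; summing the two contributions gives the constants $6$ and $4$.
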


\begin{proof}
    Expand $\tilde{Q}^{(0)}$ as the orthogonal direct sum of $Q_p$ and $Q_{p,\perp}$:
    \begin{displaymath}
        \tilde{Q}^{(0)}=Q_pQ_p^T\tilde{Q}^{(0)}+Q_{p,\perp}Q_{p,\perp}^T\tilde{Q}^{(0)}
        =(Q_p+Q_{p,\perp}Q_{p,\perp}^T\tilde{Q}^{(0)}(Q_p^T\tilde{Q}^{(0)})^{-1})Q_p^T\tilde{Q}^{(0)}.
    \end{displaymath}
    Define
    \begin{equation}\label{e0}
        E^{(0)}=Q_{p,\perp}^T\tilde{Q}^{(0)}(Q_p^T\tilde{Q}^{(0)})^{-1}.
    \end{equation}
    Then
    \begin{displaymath}
        \tilde{Q}^{(0)}(Q_p^T\tilde{Q}^{(0)})^{-1}=Q_p+Q_{p,\perp}E^{(0)}.
    \end{displaymath}
    From $PQ_p=Q_p\Gamma_{p}$ and $PQ_{p,\perp}=Q_{p,\perp}\Gamma_p^{\prime}$,
    we obtain
    \begin{displaymath}
        P^k\tilde{Q}^{(0)}(Q_p^T\tilde{Q}^{(0)})^{-1}\Gamma_{p}^{-k}
        =Q_p+P^kQ_{p,\perp} E^{(0)} \Gamma_{p}^{-k}
        =Q_p+Q_{p,\perp} \Gamma_p^{\prime k} E^{(0)} \Gamma_{p}^{-k}.
    \end{displaymath}
    Write $E^{(k)}= \Gamma_p^{\prime k}  E^{(0)} \Gamma_{p}^{-k}$.
    Then it follows from \eqref{e0} that $E^{(k)}$ is the one defined by \eqref{ek}.
    Therefore,
    \begin{displaymath}
        \| E^{(k)}\|\leq \biggl(\frac{ \gamma_{p+1}}{ \gamma_p}\biggr)^k\| E^{(0)}\|\rightarrow 0,
    \end{displaymath}
    which proves \eqref{normek}.
    Since
    \begin{displaymath}
        {\mathrm{span}}\{\tilde Q^{(k)}\}=P^k{\mathrm{span}}\{\tilde{Q}^{(0)}\}={\mathrm{span}}\{Q_p+Q_{p,\perp} E^{(k)}\},
    \end{displaymath}
    the column orthonormal
    \begin{displaymath}
        \tilde Q^{(k)}=(Q_p+Q_{p,\perp} E^{(k)})( M^{(k)})^{-\frac{1}{2}}U^{(k)},
    \end{displaymath}
    where
    \begin{displaymath}
        M^{(k)}=(Q_p+Q_{p,\perp} E^{(k)})^{T}(Q_p+Q_{p,\perp} E^{(k)})=I_p+(E^{(k)})^T  E^{(k)}
    \end{displaymath}
    and $U^{(k)}$ is some orthogonal matrix, which proves \eqref{qk} and
    \eqref{mkdef}.

    By the distance definition of two same dimensional subspaces, from
    \eqref{normek} we have
    \begin{displaymath}
        \epsilon^{(k)}=
        \|Q_{p,\perp}^T\tilde{Q}^{(k)}\|=\| E^{(k)}( M^{(k)})^{-1/2}U^{(k)}\|
        =\frac{\| E^{(k)}\|}{\sqrt{1+\| E^{(k)}\|^2}}
        \leq \biggl(\frac{ \gamma_{p+1}}{ \gamma_p}\biggr)^k\| E^{(0)}\|,
    \end{displaymath}
    which proves \eqref{qkdist}.
    Therefore, under the assumption that $R_1^{(k)}$ and $R_2^{(k)}$ in Step 4 of \cref{alg:Augmented-PSVD} are nonsingular,
    since $\sigma_{\max}(\{U_p,V_p\})
    =\sigma_{\max}(\{V_p,U_p\})=1$, applying \Cref{lem:dissub}
    to $[Y^T,Z^T]^T=\tilde{Q}^{(k)}$, $\tilde{Y}:=V_p$ and $\tilde{Z}:=U_p$ yields
    \begin{align*}
        &{\mathrm{dist}}({\mathrm{span}}\{V_p\},{\mathrm{span}}\{Q_1^{(k)}\}) \leq \sqrt{2} \ {\mathrm{dist}}({\mathrm{span}}\{Q_p\},{\mathrm{span}}\{\tilde Q^{(k)}\}),\\
        &{\mathrm{dist}}({\mathrm{span}}\{U_p\},{\mathrm{span}}\{Q_2^{(k)}\}) \leq \sqrt{2} \ {\mathrm{dist}}({\mathrm{span}}\{Q_p\},{\mathrm{span}}\{\tilde Q^{(k)}\}),
    \end{align*}
    which proves \eqref{q1kdist} and \eqref{q2kdist}.

    Write the orthogonal direct sum decompositions of $Q_1^{(k)}$ and $Q_2^{(k)}$ as
    \begin{align}
        Q_1^{(k)}&=(V_p+V_{p,\perp}E_1^{(k)})(M_1^{(k)})^{-\frac{1}{2}}U_1^{(k)},\\ 
        Q_2^{(k)}&=(U_p+U_{p,\perp}E_2^{(k)})(M_2^{(k)})^{-\frac{1}{2}}U_2^{(k)},
    \end{align}
    where $M_i^{(k)}=I+(E_i^{(k)})^T E_i^{(k)},\ i=1,2$,
    $U_i^{(k)}, \ i=1,2$ are some $p\times p$ orthogonal matrices,
    and
    \begin{equation}\label{epsi}
        \epsilon_i^{(k)}=\frac{\|E_i^{(k)}\|}{\sqrt{1+\|E_i^{(k)}\|^2}}, \ i=1,2.
    \end{equation}
    By definition, we have
    \begin{displaymath}
        A^TU_{p}=V_{p}\Sigma_p, \qquad AV_{p}=U_{p}\Sigma_p.
    \end{displaymath}
    Therefore,
    \begin{align*}
        &\|U_2^{(k)}(Q_2^{(k)})^TAQ_1^{(k)}(U_1^{(k)})^T-\Sigma_p\|\\
        &=\|(M_2^{(k)})^{-\frac{1}{2}}(U_p+U_{p,\perp}E_2^{(k)})^T A (V_p+V_{p,\perp}E_1^{(k)})(M_1^{(k)})^{-\frac{1}{2}}-\Sigma_p\|\\
        &=\|(M_2^{(k)})^{-\frac{1}{2}}(U_p+U_{p,\perp}E_2^{(k)})^T(U_{p}\Sigma_p + AV_{p,\perp}E_1^{(k)})  (M_1^{(k)})^{-\frac{1}{2}}-\Sigma_p\|\\
        &=\|(M_2^{(k)})^{-\frac{1}{2}}(\Sigma_p + (U_{p,\perp}E_2^{(k)})^T A V_{p,\perp}E_1^{(k)} )  (M_1^{(k)})^{-\frac{1}{2}} -\Sigma_p\|\\
        &\leq \|(M_2^{(k)})^{-\frac{1}{2}}\Sigma_p  (M_1^{(k)})^{-\frac{1}{2}} -\Sigma_p\|+\|(M_2^{(k)})^{-\frac{1}{2}}(U_{p,\perp}E_2^{(k)})^T A V_{p,\perp}E_1^{(k)}(M_1^{(k)})^{-\frac{1}{2}}\|.
    \end{align*}
    By \eqref{epsi} and \eqref{q1kdist}, \eqref{q2kdist}, we have
    \begin{equation}\label{error2}
        \|(M_2^{(k)})^{-\frac{1}{2}}(U_{p,\perp}E_2^{(k)})^T A V_{p,\perp}E_1^{(k)}(M_1^{(k)})^{-\frac{1}{2}}\| \leq \|A\| \epsilon_1^{(k)}\epsilon_2^{(k)}\leq 2\|A\| (\epsilon^{(k)})^2.
    \end{equation}
    Let $F_i^{(k)}=I-(M_i^{(k)})^{-\frac{1}{2}}, \ i=1,2.$
    Then
    \begin{displaymath}
        \|F_i^{(k)}\|=\|I-(M_i^{(k)})^{-\frac{1}{2}}\| = 1-\frac{1}{\sqrt{1+\|E_i^{(k)}\|^2}}\leq \frac{\|E_i^{(k)}\|^2}{1+\|E_i^{(k)}\|^2} =(\epsilon_i^{(k)})^2, \quad i=1,2.
    \end{displaymath}
    Therefore,
    \begin{align*}
        \|(M_2^{(k)})^{-\frac{1}{2}}\Sigma_p  (M_1^{(k)})^{-\frac{1}{2}} -\Sigma_p\|
        &=\|(I-F_2^{(k)})\Sigma_p  (I-F_1^{(k)}) -\Sigma_p\|\\
        &=\|-F_2^{(k)}\Sigma_p - \Sigma_p F_1^{(k)}+F_2^{(k)}\Sigma_p F_1^{(k)}\|\\
        &\leq \|A\|((\epsilon_1^{(k)})^2+(\epsilon_2^{(k)})^2+(\epsilon_1^{(k)})^2(\epsilon_2^{(k)})^2)\\
        &\leq \|A\|(4(\epsilon^{(k)})^2+4(\epsilon^{(k)})^4),
    \end{align*}
    which, together with \eqref{error2}, gives
    \begin{displaymath}
        \|U_2^{(k)}(Q_2^{(k)})^TAQ_1^{(k)}(U_1^{(k)})^T-\Sigma_p\| \leq \|A\|(6(\epsilon^{(k)})^2+4(\epsilon^{(k)})^4).
    \end{displaymath}
    According to a standard perturbation result \cite[Theorem 3.3, Chapter 3]{stewart2001matrix},
    the above relation and \eqref{qkdist} establish \eqref{sigmauncond}.
\end{proof}

\begin{remark}
    Bounds \eqref{q1kdist} and \eqref{q2kdist} indicate that the
    approximate right and left singular subspaces ${\mathrm{span}}\{Q_1^{(k)}\}$ and
    ${\mathrm{span}}\{Q_2^{(k)}\}$ have similar accuracy.
    Therefore, it is expected that
    the right and left Ritz vectors $\tilde{v}_i^{(k)}$,
    $\tilde{u}_i^{(k)}$ extracted from them have
    similar accuracy too.
\end{remark}

Next we prove that the attainable accuracy of the left and right Ritz vectors
$\tilde{u}_i^{(k)}$, $\tilde{v}_i^{(k)}$ is independent of the size of
$\sigma_i$, which is opposed to the left Ritz vectors obtained by the
CJ-FEAST SVDsolverC. As a matter of fact,
the right Ritz vectors obtained by the two SVDsolvers ultimately have
similar accuracy, but the left Ritz vectors by the CJ-FEAST SVDsolverA
are much better than the ones by the CJ-FEAST SVDsolverC
for small singular values. As a consequence,
the CJ-FEAST SVDsolverA is expected to be numerically backward stable,
independently of the size of a desired $\sigma_i$.

Define the subspace
\begin{equation}
    \mathcal{W}^{(k)}={\mathrm{span}}\left\{\begin{bmatrix}
        Q_1^{(k)} & 0\\
        0 & Q_2^{(k)}
    \end{bmatrix}\right\}, k=1,\ldots.
\end{equation}
It is straightforward to justify that
\begin{equation}\label{ritzpair}
    \left(\tilde\sigma_i^{(k)},\frac{1}{\sqrt{2}}\begin{bmatrix}
        \tilde{v}_i^{(k)}\\
        \tilde{u}_i^{(k)}
    \end{bmatrix}\right),\ \left(-\tilde\sigma_i^{(k)},\frac{1}{\sqrt{2}}\begin{bmatrix}
        \tilde{v}_i^{(k)}\\
        -\tilde{u}_i^{(k)}
    \end{bmatrix}\right), i=1,2,\dots,p,
\end{equation}
are the Ritz pairs of $S_A$ with respect to $\mathcal{W}^{(k)}$.
The following theorem establishes convergence results on
the left and right Ritz vectors
$\tilde{u}_{i}^{(k)}$, $\tilde{v}_{i}^{(k)}$ as well as
new and a better
convergence result on the Ritz value $\tilde{\sigma}_{i}^{(k)}$.

\begin{theorem}\label{thm:priori}
        Let $\alpha^{(k)}=\|P^{(k)}S_A(I-P^{(k)})\|$,
        where $P^{(k)}$ is the orthogonal projector onto $\mathcal{W}^{(k)}$.
        Suppose that each singular value $\sigma_i\in [a,b]$ is simple,
        and define
        \begin{displaymath}
            \eta_i^{(k)}=\min_{j\neq i}|\sigma_i-\tilde{\sigma}_j^{(k)}|, \
            i=1,2,...,n_{sv}.
        \end{displaymath}
        Then for $i=1,2,...,n_{sv}$ it holds that
        \begin{align}
            &\sin^2\angle(u_i,\tilde{u}_{i}^{(k)})+
            \sin^2\angle(v_i,\tilde{v}_{i}^{(k)})\leq
            2\biggl(1+\frac{(\alpha^{(k)})^2}{(\eta_i^{(k)})^2}\biggr)
            \biggl(\frac{\gamma_{p+1}}{\gamma_i}\biggr)^{2k} \|E^{(0)}\|^2
            \label{tildeuv},\\
            &|\sigma_i-\tilde\sigma_{i}^{(k)}| \leq
            2\|A\|\biggl(1+\frac{(\alpha^{(k)})^2}{(\eta_i^{(k)})^2}\biggr)
            \biggl(\frac{\gamma_{p+1}}{\gamma_i}\biggr)^{2k} \|E^{(0)}\|^2
            \label{tildesigma}.
        \end{align}
\end{theorem}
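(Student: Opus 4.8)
The plan is to view $(\tilde\sigma_i^{(k)},\tfrac{1}{\sqrt2}[\tilde v_i^{(k)T},\tilde u_i^{(k)T}]^T)$ as a Ritz pair of the symmetric matrix $S_A$ extracted from the $2p$-dimensional subspace $\mathcal{W}^{(k)}$, and then invoke the standard a priori bound for Rayleigh--Ritz approximation of a simple eigenpair of a symmetric matrix (of the Saad/Jia type, e.g.\ \cite{jia2003implicitly}). Concretely, for the eigenpair $(\sigma_i,q_i)$ of $S_A$ with $q_i=\tfrac{1}{\sqrt2}[v_i^T,u_i^T]^T$, that bound reads
\begin{displaymath}
    \sin\angle\!\left(q_i,\text{Ritz vector}\right)\ \leq\ \sqrt{1+\frac{(\alpha^{(k)})^2}{(\eta_i^{(k)})^2}}\ \sin\angle\!\left(q_i,\mathcal{W}^{(k)}\right),
\end{displaymath}
where $\alpha^{(k)}=\|P^{(k)}S_A(I-P^{(k)})\|$ and $\eta_i^{(k)}=\min_{j\neq i}|\sigma_i-\tilde\sigma_j^{(k)}|$ is the separation of $\sigma_i$ from the other Ritz values. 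The left-hand side is exactly what I want: because $q_i-\text{(unit Ritz vector)}$ has the block form $\tfrac{1}{\sqrt2}[\,\cdot\,,\,\cdot\,]^T$, one checks that $\sin^2\angle(q_i,\tfrac{1}{\sqrt2}[\tilde v_i^{(k)T},\tilde u_i^{(k)T}]^T)$ equals $\tfrac12\big(\sin^2\angle(v_i,\tilde v_i^{(k)})+\sin^2\angle(u_i,\tilde u_i^{(k)})\big)$, which supplies the factor $2$ and the sum of squared sines on the left of \eqref{tildeuv}.

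The remaining task is to bound $\sin\angle(q_i,\mathcal{W}^{(k)})$ by the quantity $\big(\gamma_{p+1}/\gamma_i\big)^{k}\|E^{(0)}\|$. Since $\mathcal{W}^{(k)}=\operatorname{span}\{Q_1^{(k)}\}\oplus\operatorname{span}\{Q_2^{(k)}\}$ (block-diagonal direct sum), a vector of the form $\tfrac{1}{\sqrt2}[v_i^T,u_i^T]^T$ satisfies
\begin{displaymath}
    \sin^2\angle(q_i,\mathcal{W}^{(k)}) = \tfrac12\big(\sin^2\angle(v_i,\operatorname{span}\{Q_1^{(k)}\})+\sin^2\angle(u_i,\operatorname{span}\{Q_2^{(k)}\})\big)\leq \max\{\epsilon_{1}^{(k)2},\epsilon_{2}^{(k)2}\},
\end{displaymath}
and then \eqref{q1kdist}--\eqref{q2kdist} give $\sin\angle(q_i,\mathcal{W}^{(k)})\leq\sqrt2\,\epsilon^{(k)}$. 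This only yields the crude rate $(\gamma_{p+1}/\gamma_p)^k$ through \eqref{qkdist}, so to get the sharper individual rate $(\gamma_{p+1}/\gamma_i)^k$ I instead track $q_i$ directly through the subspace iteration: from \eqref{qk} the component of $\tilde Q^{(k)}$ orthogonal to $q_i$ inside $\operatorname{span}\{Q_p\}$ is governed by $\gamma_{p+1}^{\,k}/\gamma_i^{\,k}$ times the corresponding entry of $E^{(0)}$ (more precisely, reusing the $E^{(k)}=\Gamma_p^{\prime k}E^{(0)}\Gamma_p^{-k}$ decomposition and isolating the $i$-th column block), which bounds $\sin\angle(q_i,\operatorname{span}\{\tilde Q^{(k)}\})\leq(\gamma_{p+1}/\gamma_i)^k\|E^{(0)}\|$; combining this with the block-split argument above transfers the same rate to $\sin\angle(q_i,\mathcal{W}^{(k)})$ up to the $\sqrt2$ already absorbed in the leading $2$. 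Squaring and plugging into the Rayleigh--Ritz bound gives \eqref{tildeuv}.

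For \eqref{tildesigma} I would use the residual/eigenvalue perturbation estimate for symmetric matrices: the Ritz value error satisfies $|\sigma_i-\tilde\sigma_i^{(k)}|\leq \|S_A\|\sin^2\angle(q_i,\text{Ritz vector})$ up to a benign constant, or more simply $|\sigma_i-\tilde\sigma_i^{(k)}|\le \|S_A\|\,\sin^2\angle(q_i,\mathcal{W}^{(k)})$-type bounds once the Ritz vector is the best approximation; since $\|S_A\|=\|A\|$, this turns the squared-sine bound from \eqref{tildeuv} directly into \eqref{tildesigma} with the same constant $2$ and the same convergence factor. The main obstacle I anticipate is the bookkeeping in the second paragraph: getting the \emph{individual} factor $(\gamma_{p+1}/\gamma_i)^k$ rather than the uniform $(\gamma_{p+1}/\gamma_p)^k$ requires carefully re-deriving the column-wise behavior of $E^{(k)}$ for the single index $i$ and then showing the block QR factorizations $Y^{(k)}=Q_1^{(k)}R_1^{(k)}$, $Z^{(k)}=Q_2^{(k)}R_2^{(k)}$ do not degrade this per-vector rate — essentially a refined, vectorized version of \Cref{lem:dissub} applied to the single direction $q_i$ instead of the whole block $Q_p$.
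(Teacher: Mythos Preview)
Your overall strategy is exactly the paper's: treat $\bigl(\tilde\sigma_i^{(k)},\tfrac{1}{\sqrt2}[\tilde v_i^{(k)T},\tilde u_i^{(k)T}]^T\bigr)$ as a Ritz pair of $S_A$ from $\mathcal{W}^{(k)}$, apply Saad's a priori bound for the Ritz vector, bound $\sin\angle(q_i,\mathcal{W}^{(k)})$ via the $i$th column of $E^{(k)}$, and split back into the $u_i$ and $v_i$ components.

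What you flag as ``the main obstacle,'' however, is not one. You correctly reach
\begin{displaymath}
\sin\angle\bigl(q_i,\operatorname{span}\{\tilde Q^{(k)}\}\bigr)\leq \sin\angle\bigl(q_i,q_i+Q_{p,\perp}E^{(k)}e_i\bigr)\leq \|E^{(k)}e_i\|\leq \Bigl(\frac{\gamma_{p+1}}{\gamma_i}\Bigr)^{k}\|E^{(0)}\|,
\end{displaymath}
but then propose to transfer this to $\sin\angle(q_i,\mathcal{W}^{(k)})$ through the block QR factorizations and a ``vectorized \Cref{lem:dissub}.'' That detour is unnecessary: since $Y^{(k)}=Q_1^{(k)}R_1^{(k)}$ and $Z^{(k)}=Q_2^{(k)}R_2^{(k)}$, every column of $\tilde Q^{(k)}=[Y^{(k)T},Z^{(k)T}]^T$ already lies in $\operatorname{span}\{\diag(Q_1^{(k)},Q_2^{(k)})\}=\mathcal{W}^{(k)}$, so $\operatorname{span}\{\tilde Q^{(k)}\}\subset\mathcal{W}^{(k)}$ and hence $\sin\angle(q_i,\mathcal{W}^{(k)})\leq\sin\angle(q_i,\operatorname{span}\{\tilde Q^{(k)}\})$ directly. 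No $\sqrt2$ enters at this step, and the factors $R_1^{(k)},R_2^{(k)}$ play no role; this is precisely how the paper proceeds.

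Two smaller corrections. First, the identity you assert,
\begin{displaymath}
\sin^2\angle\Bigl(q_i,\tfrac{1}{\sqrt2}\bigl[\begin{smallmatrix}\tilde v_i^{(k)}\\ \tilde u_i^{(k)}\end{smallmatrix}\bigr]\Bigr)=\tfrac12\bigl(\sin^2\angle(v_i,\tilde v_i^{(k)})+\sin^2\angle(u_i,\tilde u_i^{(k)})\bigr),
\end{displaymath}
is not an equality in general (expand both sides via inner products; equality forces $v_i^T\tilde v_i^{(k)}=u_i^T\tilde u_i^{(k)}$). What is needed, and what the paper cites as \cite[Theorem~2.3]{jia2003implicitly}, is only the inequality $\sin^2\angle(u_i,\cdot)+\sin^2\angle(v_i,\cdot)\leq 2\sin^2\angle(q_i,\cdot)$. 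Second, in \eqref{tildesigma} the constant $2$ in $2\|A\|$ comes from $\|S_A-\sigma_i I\|\leq 2\|A\|$ (Saad's Proposition~4.5 uses $A-\lambda I$, not $A$), not from $\|S_A\|=\|A\|$ alone.
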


\begin{proof}
    Note that
    $(\tilde\sigma_i^{(k)},\frac{1}{\sqrt{2}}\biggl[\begin{smallmatrix}
        \tilde{v}_i^{(k)}\\
        \tilde{u}_i^{(k)}
    \end{smallmatrix}\biggr]), i=1,2,\dots,n_{sv},$ are the Ritz pairs of $S_A$ with respect to $\mathcal{W}^{(k)}$. An application of
    \cite[Theorem 4.6, Proposition 4.5]{saad2011numerical} yields
    \begin{align}
        &\sin\angle(q_i,\biggl[\begin{smallmatrix}
            \tilde{v}_{i}^{(k)}\\
            \tilde{u}_{i}^{(k)}
        \end{smallmatrix}\biggr]) \leq
        \sqrt{1+\frac{(\alpha^{(k)})^2}{(\eta_i^{(k)})^2}}\sin\angle(q_i,\mathcal{W}^{(k)}) , \label{Ritz vectors} \\
        &|\sigma_i-\tilde\sigma_{i}^{(k)}|  \leq   \|S_{A}-\sigma_i I\|  \sin^2\angle(q_i,\biggl[\begin{smallmatrix}
            \tilde{v}_{i}^{(k)}\\
            \tilde{u}_{i}^{(k)}
        \end{smallmatrix}\biggr]) \leq 2 \|A\|
        \sin^2\angle(q_i,\biggl[\begin{smallmatrix}
            \tilde{v}_{i}^{(k)}\\
            \tilde{u}_{i}^{(k)}
        \end{smallmatrix}\biggr]). \label{Ritz values}
    \end{align}
    Since  span$\{\tilde{Q}^{(k)}\}\subset \mathcal{W}^{(k)}$, we have
    \begin{align*}
        \sin\angle(q_i,\mathcal{W}^{(k)}) &\leq \sin\angle(q_i, {\mathrm{span}}\{\tilde{Q}^{(k)}\})\\
            &=\sin\angle(q_i, {\mathrm{span}}\{Q_p+Q_{p,\perp} E^{(k)}\})
            \mbox{\ \ \ by \ \eqref{qk}}\\
            &\leq \sin\angle(q_i, q_i+Q_{p,\perp} E^{(k)}e_i)\\
            &\leq \|E^{(k)}e_i\|=\|\Gamma_p^{\prime k} E^{(0)}\Gamma_{p}^{-k}e_i\|
            =\|\Gamma_p^{\prime k} E^{(0)}\gamma_{i}^{-k}e_i\|   \\
            &\leq \biggl(\frac{ \gamma_{p+1}}{ \gamma_i}\biggr)^k \| E^{(0)}\|.
    \end{align*}
    Substituting the last inequality into \eqref{Ritz vectors} gives
    \begin{equation}\label{vecaccuracy}
        \sin\angle(q_i,\biggl[\begin{smallmatrix}
            \tilde{v}_{i}^{(k)}\\
            \tilde{u}_{i}^{(k)}
        \end{smallmatrix}\biggr]) \leq
        \sqrt{1+\frac{(\alpha^{(k)})^2}{(\eta_i^{(k)})^2}} \biggl(\frac{
        \gamma_{p+1}}{ \gamma_i}\biggr)^k \| E^{(0)}\|.
    \end{equation}
    Combining \eqref{vecaccuracy} and \eqref{Ritz values} proves \eqref{tildesigma}.
    From \cite[Theorem 2.3]{jia2003implicitly}, we have
    \begin{equation}
        \sin^2\angle(u_i,\tilde{u}_{i}^{(k)})+\sin^2\angle(v_i,\tilde{v}_{i}^{(k)})
        \leq 2\sin^2\angle(q_i, \biggl[\begin{smallmatrix}
                \tilde{v}_{i}^{(k)}\\
                \tilde{u}_{i}^{(k)}
            \end{smallmatrix}\biggr]),
    \end{equation}
    which, together with \eqref{vecaccuracy}, leads to \eqref{tildeuv}.
\end{proof}

    Relations \eqref{tildeuv} and \eqref{q1kdist}, \eqref{q2kdist} show that
    $\tilde{u}_i^{(k)}$ and $\tilde{v}_i^{(k)}$ by the CJ-FEAST
    SVDsolverA have similar accuracy and each of them converges at least with
    the linear factor $\gamma_{p+1}/\gamma_i$.
    On the other hand, each $\tilde{\sigma}_i^{(k)}$ converges at the linear
    factor $(\gamma_{p+1}/\gamma_i)^2,\ i=1,2,\ldots,n_{sv}$,
    meaning that the error of $\tilde{\sigma}_i^{(k)}$ is roughly the
    error squares of $\tilde{u}_i^{(k)}$ and $\tilde{v}_i^{(k)}$ until
    $|\tilde{\sigma}_i^{(k)}-\sigma_i|\leq
    \|A\|\mathcal{O}(\epsilon_{\mathrm{mach}})$ in finite precision
    arithmetic.

    We next prove that the CJ-FEAST SVDsolverA is numerically backward stable
    independently of size of $\sigma_i$. Merge \eqref{ritzpair} for $i=1,2,\ldots,p$, and recall the notation in
    Steps 6--7 of \cref{alg:Augmented-PSVD}. We have
    \begin{displaymath}
        \frac{1}{\sqrt{2}}\begin{bmatrix}
            \tilde{V}^{(k)} & \tilde{V}^{(k)}\\
            \tilde{U}^{(k)} & -\tilde{U}^{(k)}
        \end{bmatrix}^T S_A \frac{1}{\sqrt{2}}\begin{bmatrix}
            \tilde{V}^{(k)} & \tilde{V}^{(k)}\\
            \tilde{U}^{(k)} & -\tilde{U}^{(k)}
        \end{bmatrix} = \begin{bmatrix}
            \tilde{\Sigma}^{(k)} & \\
            & -\tilde{\Sigma}^{(k)}
        \end{bmatrix}.
    \end{displaymath}
    Then using the proof approach to estimating $\|r_C^{(k)}\|$ in
    \Cref{sec: review previous solver},
    we can prove that
    the residual $r_{A}^{(k)}$ of the Ritz block
    \begin{displaymath}
        \biggl(\begin{bmatrix}
            \tilde{\Sigma}^{(k)} & \\
            & -\tilde{\Sigma}^{(k)}
        \end{bmatrix}, \frac{1}{\sqrt{2}}\begin{bmatrix}
        \tilde{V}^{(k)} & \tilde{V}^{(k)}\\
            \tilde{U}^{(k)} & -\tilde{U}^{(k)}
        \end{bmatrix}\biggr)
    \end{displaymath}
    as an approximation to the eigenblock
    \begin{displaymath}
        \biggl(\begin{bmatrix}
        \Sigma_p & \\
            & -\Sigma_p
        \end{bmatrix}, \frac{1}{\sqrt{2}}\begin{bmatrix}
        V_p & V_p \\
        U_p & -U_p
        \end{bmatrix}\biggr)
    \end{displaymath}
    of $S_A$ satisfies
    \begin{equation}\label{resSA}
        \|r_{A}^{(k)}\| \leq 2\|S_A\| {\mathrm{dist}}({\mathrm{span}}\{\bigl[\begin{smallmatrix}
            \tilde{V}^{(k)} & \tilde{V}^{(k)}\\
                \tilde{U}^{(k)} & -\tilde{U}^{(k)}
        \end{smallmatrix}\bigr]\}, {\mathrm{span}}\{\bigl[\begin{smallmatrix}
            V_p & V_p \\
            U_p & -U_p
        \end{smallmatrix}\bigr]\}).
    \end{equation}
    On the other hand, we obtain
    \begin{align*}
        &{\mathrm{dist}}({\mathrm{span}}\{\bigl[\begin{smallmatrix}
            \tilde{V}^{(k)} & \tilde{V}^{(k)}\\
                \tilde{U}^{(k)} & -\tilde{U}^{(k)}
        \end{smallmatrix}\bigr]\}, {\mathrm{span}}\{\bigl[\begin{smallmatrix}
            V_p & V_p \\
            U_p & -U_p
        \end{smallmatrix}\bigr]\}) \\
        &= {\mathrm{dist}}({\mathrm{span}}\{\bigl[\begin{smallmatrix}
            \tilde{V}^{(k)} & \\
             & \tilde{U}^{(k)}
        \end{smallmatrix}\bigr]\}, {\mathrm{span}}\{\bigl[\begin{smallmatrix}
            V_p & \\
             & U_p
        \end{smallmatrix}\bigr]\})  \\
        &=\max\{\mathrm{dist}({\mathrm{span}}\{\tilde{V}^{(k)}\},
        {\mathrm{span}}\{V_p\}), \mathrm{dist}(\mathrm{span}\{\tilde{U}^{(k)}\},
        {\mathrm{span}}\{U_p\})\} \\
        &\leq \sqrt{2}\epsilon^{(k)},
    \end{align*}
    where the last inequality follows from \eqref{q1kdist} and \eqref{q2kdist}.
Let $r_{i,A}^{(k)}$ be the column $i$ of $r_A^{(k)},i=1,2,\ldots,p$.
Therefore, it follows from \eqref{resnorm}, \eqref{resSA}
and $\|S_A\|=\|A\|$ that the SVD residual norm
    \begin{displaymath}
        \|r(\tilde{\sigma}_i^{(k)},\tilde{u}_i^{(k)},\tilde{v}_i^{(k)})\|
        =\sqrt{2}\|r_{i,A}^{(k)}\| \leq \sqrt{2}\|r_{A}^{(k)}\| \leq 4
        \|A\|\epsilon^{(k)},
    \end{displaymath}
    indicating that
    the CJ-FEAST SVDsolverA is always numerically backward
    stable for computing any singular triplet of $A$ as
    $\epsilon^{(k)}=\mathcal{O}(\epsilon_{\mathrm{mach}})$ ultimately.

\section{A comparison of the CJ-FEAST SVDsolverA and SVDsolverC}
\label{sec: comparison}
We have shown in \Cref{sec: review previous solver} that the CJ-FEAST
SVDsolverC cannot compute the left singular
vectors as accurately as the right singular vectors when associated singular
values are small. As a consequence,
the solver may be numerically backward unstable, that is,
it may fail to converge for a reasonable stopping tolerance in finite precision arithmetic.
In the last section, we have shown that the CJ-FEAST
SVDsolverA can fix this deficiency perfectly. In this section,
we compare the CJ-FEAST SVDsolverA with the CJ-FEAST SVDsolverC in some
detail, and get insight into their efficiency.
Based on the results obtained,
we propose a general-purpose choice strategy between the two solvers
for the robustness and overall efficiency in practical computations.

A core in the two CJ-FEAST SVDsolvers is the construction of two different
approximate spectral projectors.
We focus on the issue of how to choose the series degrees $d$'s,
so that the two different approximate spectral projectors have
the approximately same approximation accuracy and the two solvers
converge at approximately the same rate.
Then based on the costs of one iterations of the two solvers,
for a given stopping tolerance and the interval $[a,b]$ of interest,
we will propose a choice strategy.

In the following, we use the notations hat and tilde to distinguish the two
different functions $l(x)$, $f(x)$ and $\phi_d(l(x))$, etc.,
involved in the CJ-FEAST SVDsolverC and the CJ-FEAST SVDsolverA, respectively.
Concretely, denote by
\begin{displaymath}
    \hat{l}(x)=\frac{2x-\eta^2-\eta_{-}^2}{\eta^2-\eta_{-}^2} \ \mbox{ for }
    x\in [\sigma_{\min}^2,\|A\|^2] \mbox{ and } \ \tilde{l}(x)=\frac{x}{\eta} \
    \mbox{ for } x\in [-\|A\|,\|A\|]
\end{displaymath}
that are used in the CJ-FEAST SVDsolverC and the CJ-FEAST SVDsolverA,
where $\eta$ and $\eta_{-}$ equal $\|A\|$ and $\sigma_{\min}$ or their
estimates, respectively.

For each singular value $\sigma$ of $A$, define
\begin{align*}
    &\hat{\Delta}_{\sigma,a}=|\arccos(\hat l(\sigma^2))-\arccos(\hat l(a^2))|,
    \ \hat{\Delta}_{\sigma,b} = |\arccos(\hat l(\sigma^2))-\arccos(\hat l(b^2))|,
    \\
    &\tilde{\Delta}_{\sigma, a}=|\arccos(\tilde l(\sigma))-\arccos(\tilde
    l(a))|,\  \tilde{\Delta}_{\sigma, b} = |\arccos(\tilde
    l(\sigma))-\arccos(\tilde l(b))|.
\end{align*}
    It is then seen from \Cref{lem:pointwise convergence} that
    the errors $|\hat f(\sigma^2)-\hat{\phi}_d(\hat{l}(\sigma^2))|$
    and $|\tilde f(\sigma)-\tilde{\phi}_d(\tilde{l}(\sigma))|$ are inversely
    proportional to $\hat{\Delta}_{\sigma, a}^4, \hat{\Delta}_{\sigma, b}^4$
    and $\tilde{\Delta}_{\sigma, a}^4, \tilde{\Delta}_{\sigma, b}^4$,
    respectively.

\begin{theorem}\label{thm:twodelta}
    It hold that $\hat{\Delta}_{\sigma, a} \geq
    2\tilde{\Delta}_{\sigma, a}$ and $\hat{\Delta}_{\sigma, b} \geq 2
    \tilde{\Delta}_{\sigma, b}$.
\end{theorem}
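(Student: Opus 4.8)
The plan is to regard both quantities as increments of two angle functions of the singular variable. For $x\in[\sigma_{\min},\|A\|]$ set $g_1(x)=\arccos(\hat l(x^2))$ and $g_2(x)=\arccos(\tilde l(x))=\arccos(x/\eta)$, so that $\hat\Delta_{\sigma,a}=|g_1(\sigma)-g_1(a)|$ and $\tilde\Delta_{\sigma,a}=|g_2(\sigma)-g_2(a)|$, and likewise with $a$ replaced by $b$; here $\sigma$, $a$, $b$ all lie in $[\sigma_{\min},\|A\|]\subseteq[\eta_{-},\eta]$, so every $\arccos$ argument stays in $[-1,1]$. Both $g_1$ and $g_2$ are continuous on $[\eta_{-},\eta]$ and strictly decreasing there. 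Hence it suffices to prove the pointwise inequality $g_1'(x)\le 2\,g_2'(x)<0$ on $(\eta_{-},\eta)$: integrating it from $\sigma$ to $a$ (assuming $\sigma\le a$, the opposite case being identical after swapping the roles of $\sigma$ and $a$) and using monotonicity to drop the absolute values gives
\[
 \hat\Delta_{\sigma,a}=g_1(\sigma)-g_1(a)=\int_\sigma^a\!\bigl(-g_1'(t)\bigr)\,dt\ \ge\ \int_\sigma^a\!\bigl(-2g_2'(t)\bigr)\,dt=2\bigl(g_2(\sigma)-g_2(a)\bigr)=2\tilde\Delta_{\sigma,a},
\]
and the same argument with $b$ in place of $a$ finishes the proof.

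What makes the factor $2$ natural is the special case $\eta_{-}=0$: there $\hat l(x^2)=2(x/\eta)^2-1=\cos\!\bigl(2\arccos(x/\eta)\bigr)$, so $g_1(x)=2g_2(x)$ exactly by the double-angle identity and the bound becomes an equality. For general $\eta_{-}\ge 0$ I expect the narrower mapping interval $[\eta_{-}^2,\eta^2]$ only to stretch the $\arccos$ values further apart, and the derivative inequality confirms this. Differentiating $\hat l(x^2)=\dfrac{2x^2-\eta^2-\eta_{-}^2}{\eta^2-\eta_{-}^2}$ and using the factorisation $1-\hat l(x^2)^2=\dfrac{4(\eta^2-x^2)(x^2-\eta_{-}^2)}{(\eta^2-\eta_{-}^2)^2}$ yields
\[
 g_1'(x)=-\frac{2x}{\sqrt{(\eta^2-x^2)(x^2-\eta_{-}^2)}},\qquad g_2'(x)=-\frac{1}{\sqrt{\eta^2-x^2}},
\]
so that $g_1'(x)\le 2g_2'(x)$ reduces, after an elementary manipulation, to $x\ge\sqrt{x^2-\eta_{-}^2}$, i.e. to $\eta_{-}^2\ge 0$ — true unconditionally, with equality precisely when $\eta_{-}=0$.

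I do not anticipate a genuine obstacle; the only points needing a little care are (i) verifying that $\sigma$, $a$, $b$ (and every singular value entering \Cref{sec: comparison}) belong to $[\eta_{-},\eta]$ so that $g_1$, $g_2$ are well defined — this is the standing assumption $[a,b]\subseteq[\sigma_{\min},\|A\|]$ together with the natural requirement that $\eta_{-}$ and $\eta$ under- and over-estimate $\sigma_{\min}$ and $\|A\|$ — and (ii) noting that, although $g_1'$ and $g_2'$ are singular like $(x^2-\eta_{-}^2)^{-1/2}$ and $(\eta^2-x^2)^{-1/2}$ at the endpoints, these singularities are integrable, so the integral representation of $g_1(\sigma)-g_1(a)$ stays valid even if $\sigma$ or $a$ hits an endpoint. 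The degenerate case $\sigma=a$ (resp. $\sigma=b$) is trivial, both sides vanishing.
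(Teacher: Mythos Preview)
Your proof is correct and follows essentially the same approach as the paper: both compute the derivatives $g_1'(x)=-2x/\sqrt{(\eta^2-x^2)(x^2-\eta_{-}^2)}$ and $g_2'(x)=-1/\sqrt{\eta^2-x^2}$, establish the pointwise inequality $g_1'(x)\le 2g_2'(x)<0$ (equivalent to $x\ge\sqrt{x^2-\eta_{-}^2}$), and integrate between $\sigma$ and $a$ (resp.\ $b$). Your version is slightly more thorough in treating the endpoint and degenerate cases, and the remark on the equality case $\eta_{-}=0$ via the double-angle identity is a nice addition.
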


\begin{proof}
    Since $\frac{d\arccos(x)}{dx}=\frac{-1}{\sqrt{1-x^2}}$, we have
\begin{align*}
    &\frac{d\arccos(\hat l(x^2))}{dx}=\frac{-\hat l'(x^2)2x}{\sqrt{1-\hat l^2(x^2)}}=\frac{-4x}{(\eta^2-\eta_{-}^2)\sqrt{1-\hat l^2(x^2)}}\\
    &=\frac{-4x}{\sqrt{(\eta^2-\eta_{-}^2)^2-(2x^2-\eta^2-\eta_{-}^2)^2}}
    =\frac{-2x}{\sqrt{(x^2-\eta_{-}^2)(\eta^2-x^2)}}
\end{align*}
and
\begin{displaymath}
    \frac{d\arccos(\tilde l(x))}{dx}=\frac{-\tilde l'(x)}{\sqrt{1-\tilde l^2(x)}}=\frac{-1}{\eta\sqrt{1-\tilde l^2(x)}}=\frac{-1}{\sqrt{\eta^2-x^2}}.
\end{displaymath}
For $x\in (\sigma_{\min}, \|A\|)$, since
\begin{displaymath}
    \frac{-2x}{\sqrt{(x^2-\eta_{-}^2)(\eta^2-x^2)}} < 2 \frac{-1}{\sqrt{\eta^2-x^2}} < 0,
\end{displaymath}
we obtain
\begin{align*}
    &|\arccos(\hat l(\sigma^2))-\arccos(\hat l(a^2))|=\left|\int_{a}^{\sigma}\frac{-2x}{\sqrt{(x^2-\eta_{-}^2)(\eta^2-x^2)}}  dx \right| \\
    &\geq \left|\int_{a}^{\sigma}2\frac{-1}{\sqrt{\eta^2-x^2}} dx \right|
	=2|\arccos(\tilde l(\sigma))-\arccos(\tilde l(a))|.
\end{align*}
Similarly, we obtain
\begin{displaymath}
    |\arccos(\hat l(\sigma^2))-\arccos(\hat l(b^2))| \geq 2|\arccos(\tilde l(\sigma))-\arccos(\tilde l(b))|.
\end{displaymath}
Thus the assertions are proved.
\end{proof}

\begin{remark}\label{rem:2.5 times larger}
    From \Cref{thm:generalconver}, \Cref{thm:priori} and Theorems 5.1--5.2 of
    \cite{jia2022afeastsvdsolver},
    in order to make the CJ-FEAST SVDsolverA and SVDsolverC converge
    and use approximately the same iterations for a given stopping tolerance,
    we should choose the series degree $d$'s to make the errors of
    $\hat{\phi}_d(\hat{l}(\sigma^2))$ and $\tilde{\phi}_d(\tilde{l}(\sigma))$
    and the accuracy of the corresponding
    approximate spectral projectors
    are approximately equal.
    With such a choice,
    the approximate right singular subspaces of the two SVDsolvers converge roughly at the same speed.
    To this end, we make
    the bound in \eqref{Accuracy of projector2} and the counterpart
    in the CJ-FEAST SVDsolverC equal. As a result, for the series degree
    $d=d_a$ in the CJ-FEAST SVDsolverA and the series degree
    $d=d_c$ in the CJ-FEAST SVDsolverC, we obtain
    \begin{displaymath}
        \frac{\pi^6}{2(d_c+2)^3\min\{\tilde{\Delta}_{\sigma, a}^4,
        \tilde{\Delta}_{\sigma, b}^4\}}
        = \frac{\pi^6}{2(d_a+2)^3
        \min\{\tilde{\Delta}_{\sigma,a}^4,\tilde{\Delta}_{\sigma,b}^4\}},
    \end{displaymath}
    which, by exploiting \Cref{thm:twodelta}, shows that $d_a$ and $d_c$ 
    satisfy
    \begin{equation}\label{dadc}
        d_a\geq 2\sqrt[3]{2}(d_c+2)-2\approx 2.52 d_c+3.
    \end{equation}
\end{remark}

\begin{remark}
    Recall from \Cref{table:Computational cost} that for the same $p$ and $d$,
    the computational cost of one iteration of the CJ--FEAST SVDsolverA is more than that of the CJ--FEAST SVDsolverC.
    Therefore, \Cref{rem:2.5 times larger} means that the CJ-FEAST SVDsolverC is at least $2\sqrt[3]{2}$ times
    as efficient as the CJ-FEAST SVDsolverC when they converge for the same stopping tolerance.
\end{remark}

Next we return to the attainable residual norms by the CJ-FEAST SVDsolverC
in finite precision arithmetic.
Based on the results in \Cref{sec: review previous solver},
to make a Ritz approximation by the CJ-FEAST SVDsolverC converge
for a prescribed tolerance $tol$:
\begin{displaymath}
    \|r\| \leq \|A\| \cdot tol,
\end{displaymath}
relation \eqref{tolfail} shows that a general-purpose {\em smallest} $tol$ should satisfy
\begin{equation}
    tol\geq \frac{\|A\|}{\sigma}\mathcal{O}(\epsilon_{\mathrm{mach}}).
\end{equation}
Notice that in large SVD computations, one commonly uses
$tol\in [\epsilon_{\mathrm{mach}}^{3/4}, \epsilon_{\mathrm{mach}}^{1/2}]$,
i.e., approximately, $tol\in [10^{-12},10^{-8}]$ with
$\epsilon_{\mathrm{mach}}=2.22\times 10^{-16}$. Therefore,
to make the CJ-FEAST SVDsolverC converge with such a $tol$,
the desired $\sigma$ should meet
\begin{displaymath}
    \frac{\|A\|}{\sigma}\leq \mathcal{O}(\epsilon_{\mathrm{mach}}^{-1/4})
\sim \mathcal{O}(\epsilon_{\mathrm{mach}}^{-1/2});
\end{displaymath}
otherwise, the CJ-FEAST SVDsolverC may fail to converge
in finite precision.

Summarizing the above, we propose a robust choice strategy:
Given $[a,b]$, suppose that there is a $\sigma$ close to $a$
and $\eta$ is an estimate of $\|A\|$ and that we choose
a stopping tolerance
$tol\in [\epsilon_{\mathrm{mach}}^{3/4},\epsilon_{\mathrm{mach}}^{1/2}]$.
Then if $\frac{\eta}{a}\geq \epsilon_{\mathrm{mach}}^{-1/4}$,
the more robust CJ-FEAST SVDsolverA is used;
if not, the more efficient CJ-FEAST SVDsolverC in \cite{jia2022afeastsvdsolver} is used.

\section{Numerical experiments}\label{sec: experiments}
We report numerical experiments to confirm our theory and
illustrate the performance of the CJ-FEAST SVDsolverA and the
CJ-FEAST SVDsolverC.
Our test problems are from The SuiteSparse Matrix Collection
\cite{davis2011university}.
We list some of their basic properties and the interval $[a,b]$ of interest
in \cref{table:Properties of test matrices}.
The exact singular values of $A$ are from \cite{davis2011university}.
Since bounding the singular spectrum of $A$ and estimating the number $n_{sv}$
are not the purpose of this paper,
we will use the known $\eta=\|A\|$, $\eta_{-}=\sigma_{\min}(A)$ and the exact
$n_{sv}$. All the numerical experiments were performed on an Intel Core
i7-9700, CPU 3.0GHz, 8GB RAM using MATLAB R2022b with
$\epsilon_{\mathrm{mach}}=2.22e-16$ under the Microsoft Windows 10 64-bit system.
An approximate singular triplet $(\tilde{\sigma}, \tilde{u}, \tilde{v})$
is claimed to have converged if its relative residual norm attains
the level of $\epsilon_{\mathrm{mach}}$:
\begin{equation}
   \|r(\tilde{\sigma}, \tilde{u}, \tilde{v})\| \leq \eta \cdot tol=\eta \cdot 1e-14.
\end{equation}

\begin{table}[h]
    \begin{center}
        \resizebox*{\textwidth}{!}{
            \begin{tabular}{|c|c|c|c|c|c|c|c|}
                \hline
                Matrix $A$ & $m$    & $n$   & $nnz(A)$ & $\|A\|$  &$\sigma_{\min}(A)$ & $[a,b]$ & $n_{sv}$\\
                \hline
                rel8            & 345688 & 12347 & 821839   & 18.3  & 0 & $[13, 14]$    & 13       \\
                GL7d12          & 8899   & 1019  & 37519    & 14.4  & 0 & $[11, 12]$   & 17        \\
                flower\_5\_4    & 5226   & 14721 & 43942    & 5.53  & $3.70e-1$ & $[4.1,4.3]$  & 137   \\
                barth5          & 15606  & 15606 & 61484    & 4.23  & $7.22e-11$ & $[1e-8,1e-1]$  & 819 \\
                3elt\_dual      & 9000   & 9000  & 26556    & 3.00  & $6.31e-13$ & $[1e-11,1e-1]$ & 171 \\
                big\_dual       & 30269  & 30269 & 89858    & 3.00  & 0  & $[1e-14,1e-1]$ & 432  \\
                \hline
            \end{tabular}
        }
        \caption{Properties of test matrices, where $nnz(A)$ is the number
        of nonzero entries in $A$, and $\|A\|$, $\sigma_{\min}(A)$ and $n_{sv}$ are from \cite{davis2011university}.}
        \label{table:Properties of test matrices}
    \end{center}
\end{table}

For a practical choice of the series degree $d$,
the results and analysis on the strategies for the CJ-FEAST SVDsolverC in
\cite{jia2022afeastsvdsolver} is
straightforwardly adaptable to the CJ-FEAST SVDsolverA. Precisely, we will
choose
\begin{equation}\label{dchoice}
    d=\left\lceil \frac{D\pi^2}{(\alpha-\beta)^{4/3}} \right\rceil -2
\end{equation}
with $D\in [1, 4]$.
Keep in mind that $d_a$ and $d_c$ denote the series degrees in the 
CJ-FEAST SVDsolverA and SVDsolverC, respectively.
With the same $D$, by \eqref{dadc}, we 
take $d_a=\lceil 2\sqrt[3]{2}d_c\rceil$ 
throughout the experiments.
For the subspace dimension $p$,
we will take $p=\lceil \mu n_{sv}\rceil $ with $\mu \in [1.1, 1.5]$.

\subsection{Computing singular triplets with not small singular values}
We apply \cref{alg:Augmented-PSVD} and the CJ-FEAST SVDsolverC
to GL7d12, whose desired singular values $\sigma$ are not small:
$\|A\|/\sigma=\mathcal{O}(1)$.
In terms of \eqref{dadc} and
\eqref{dchoice}, we take $D = 4$ to obtain the polynomial degree
$d_a=698$ and $d_c=276$,
and take $p=\lceil 1.2\times 17\rceil=21$.
It is observed that the two solvers converged at roughly the same
iteration steps $k_a=6$ and $k_c=7$, respectively.
Then we take $D = 2$ to obtain
$d_a=348$ and $d_c=137$,
and take $p=\lceil 1.5\times 17\rceil=26$.
They are found to have converged at
roughly the same iteration steps $k_a=7$ and $k_c=9$, respectively.
We have also taken some other $d_a$ and $d_c$ with the same $D$,
and the same $p>n_{sv}$, and observed that the two solvers used almost the
same iterations to achieve $tol=1e-14$. In
\cref{fig:moderate singular values GL7d12},
we draw the convergence processes of the two solvers for the
singular triplet with $\sigma=11.844206301985537$.

\begin{figure}[tbhp]
    \centering
    \subfloat[CJ-FEAST SVDsolverA, $d=698, p=21$.]{\label{fig:GL7d12_aug1184}\includegraphics[scale=0.4]{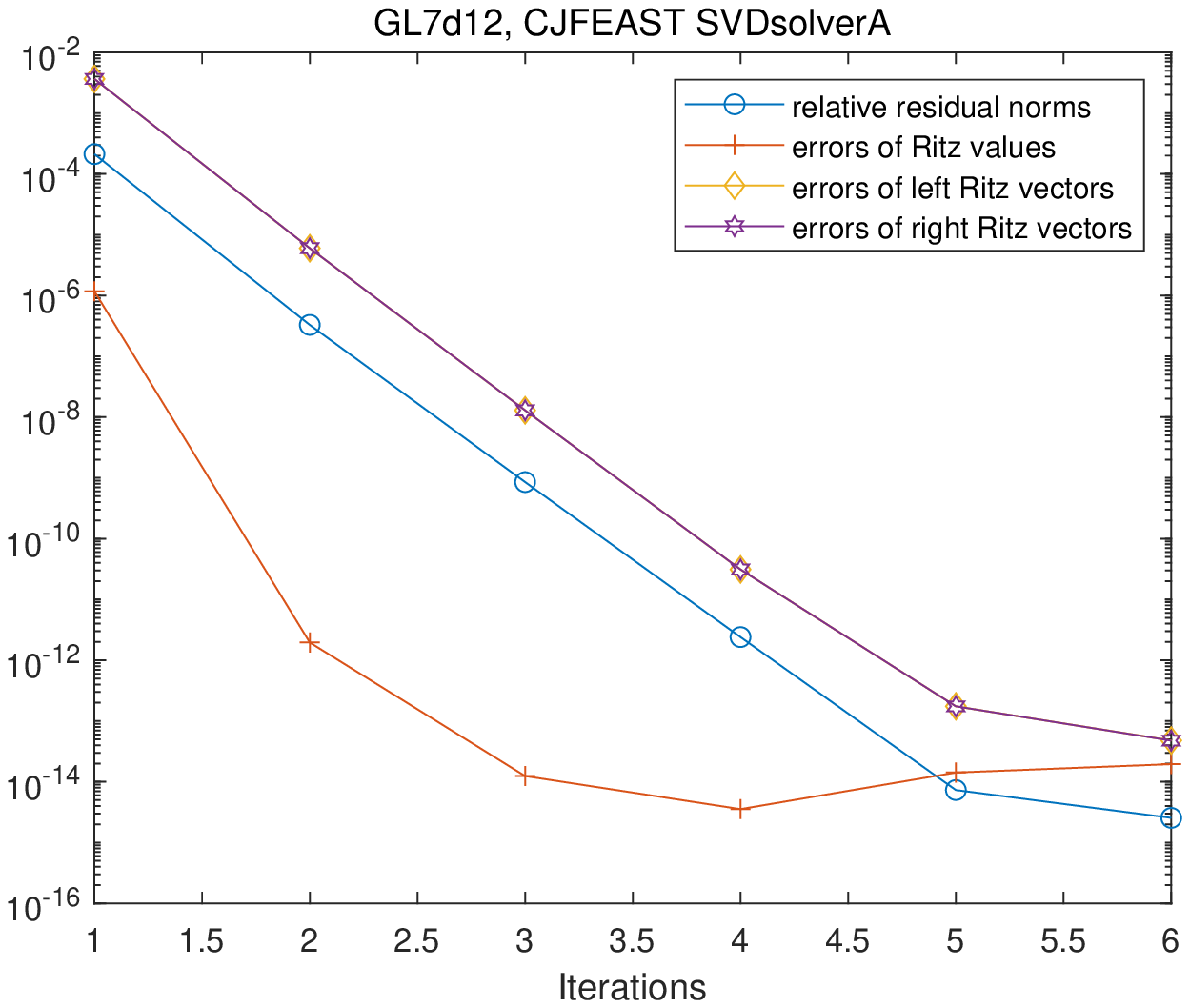}}
    \subfloat[CJ-FEAST SVDsolverC, $d=276, p=21$.]{\label{fig:GL7d12_crossproduct1184}\includegraphics[scale=0.4]{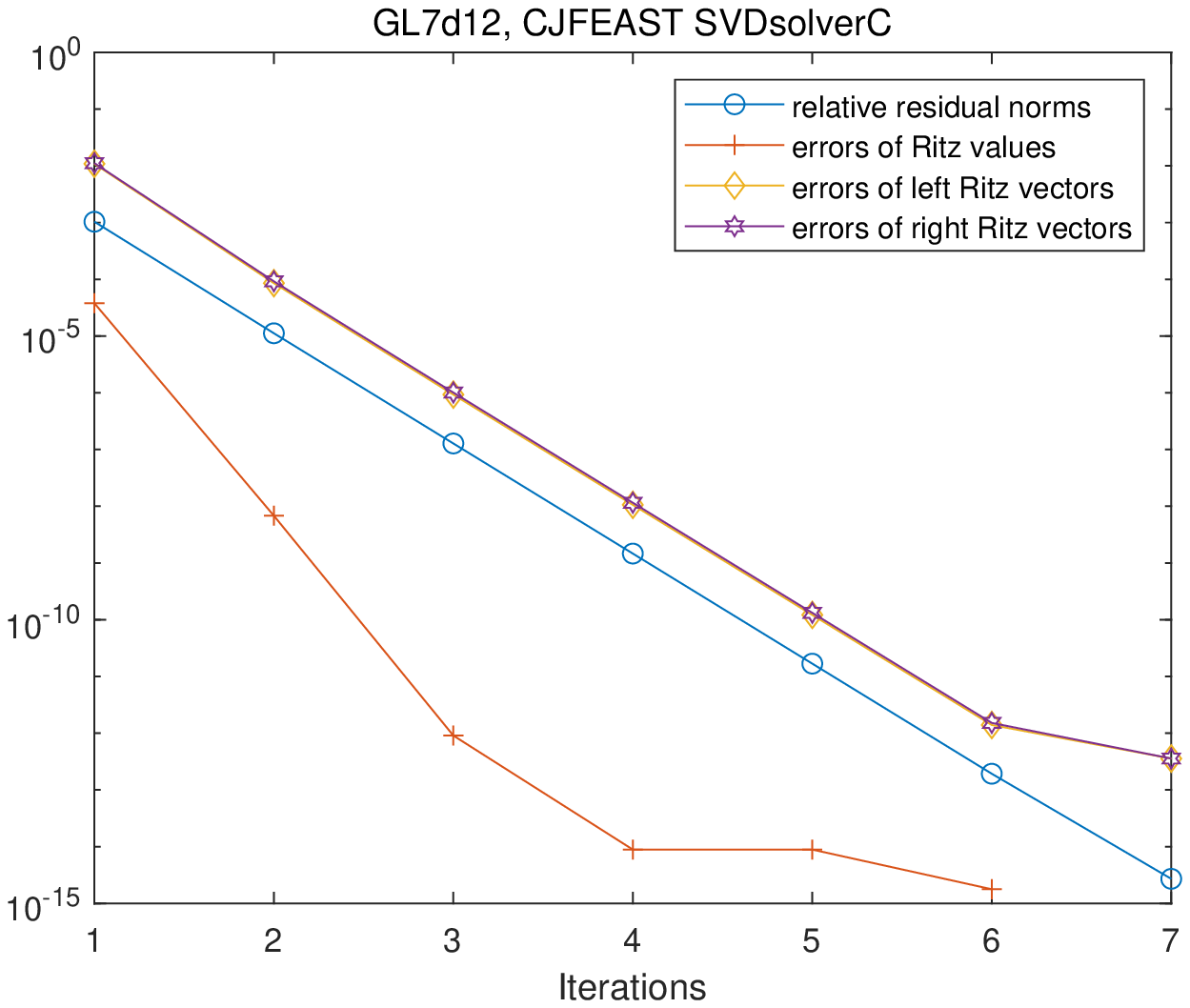}}

    \subfloat[CJ-FEAST SVDsolverA, $d=348, p=26$.]{\label{fig:GL7d12_aug1183}\includegraphics[scale=0.4]{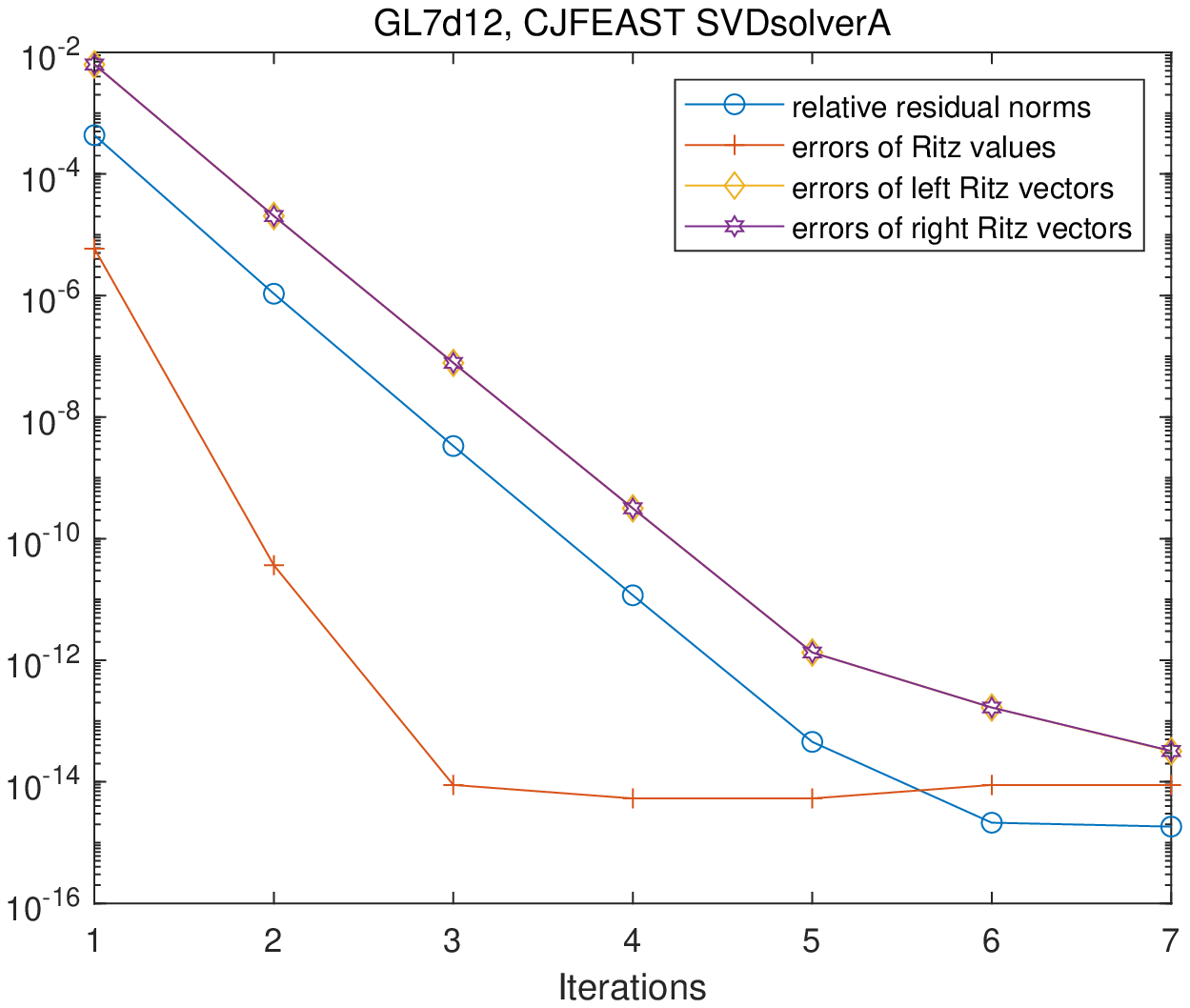}}
    \subfloat[CJ-FEAST SVDsolverC, $d=137, p=26$.]{\label{fig:GL7d12_crossproduct1183}\includegraphics[scale=0.4]{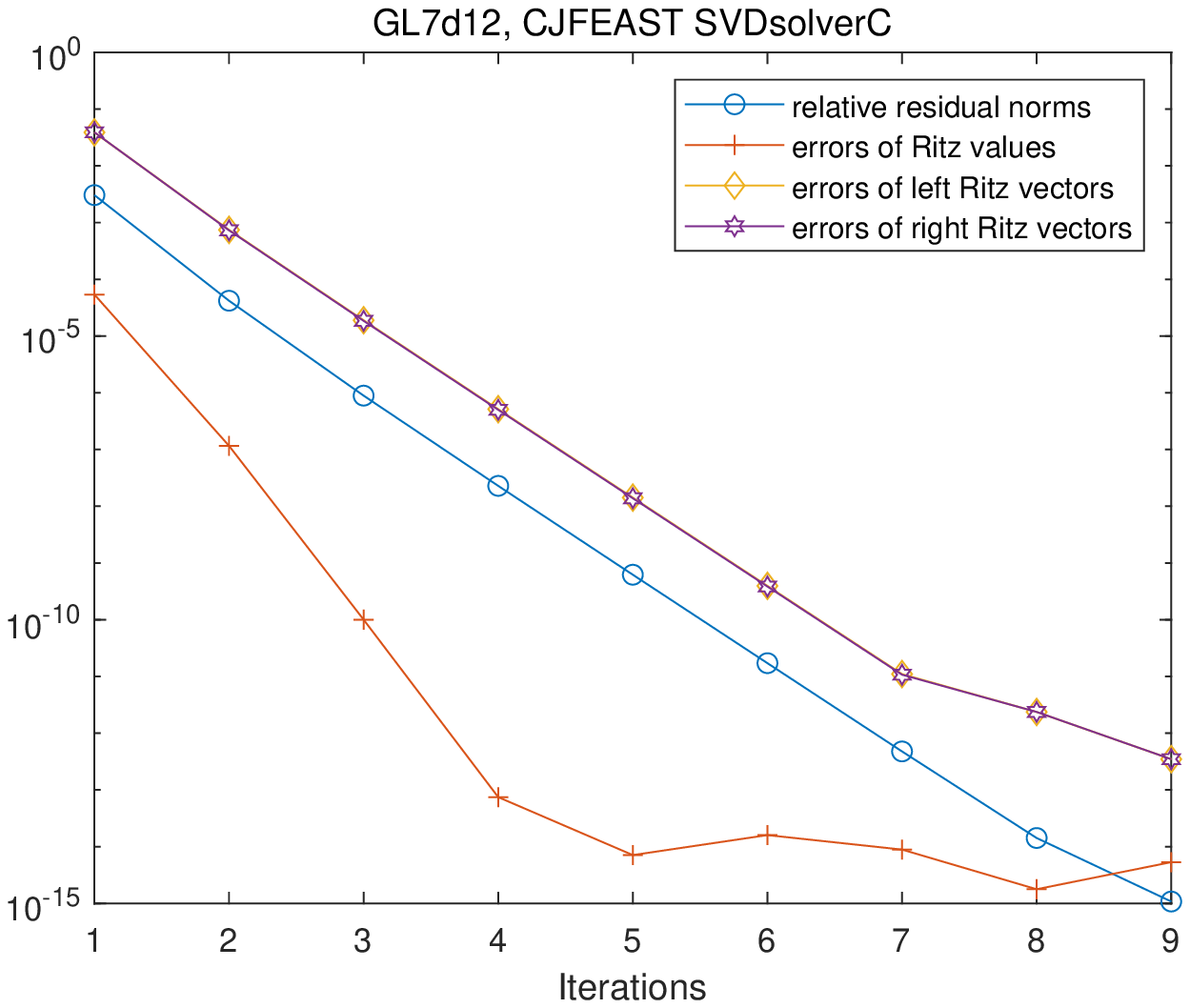}}
    \caption{Convergence processes of approximate singular triplets of GL7d12.}
    \label{fig:moderate singular values GL7d12}
\end{figure}

For flower\_5\_4, we take $D = 2$ to obtain
$d_a=928$ and $d_c=365$,
and take $p=\lceil 1.2\times 137\rceil=165$.
The two SVDsolvers
converged at iteration steps $k_a=8$ and $k_c=11$.
For rel8, we take $D = 2$ to obtain
$d_a = 561$ and
$d_c = 222$,
and $p=\lceil 1.1\times 13\rceil=15$.
The two SVDsolvers converged at iteration steps $k_a=17$
and $k_c=20$ separately,
roughly the same. In \cref{fig:moderate singular values},
we depict the convergence processes of the two solvers for
computing the
singular triplet with $\sigma=4.299030932949072$ of flower\_5\_4 and
$\sigma=13.984665903216351$ of rel8.

\begin{figure}[tbhp]
    \centering
    \subfloat[flower\_5\_4, CJ-FEAST SVDsolverA]{\includegraphics[scale=0.4]{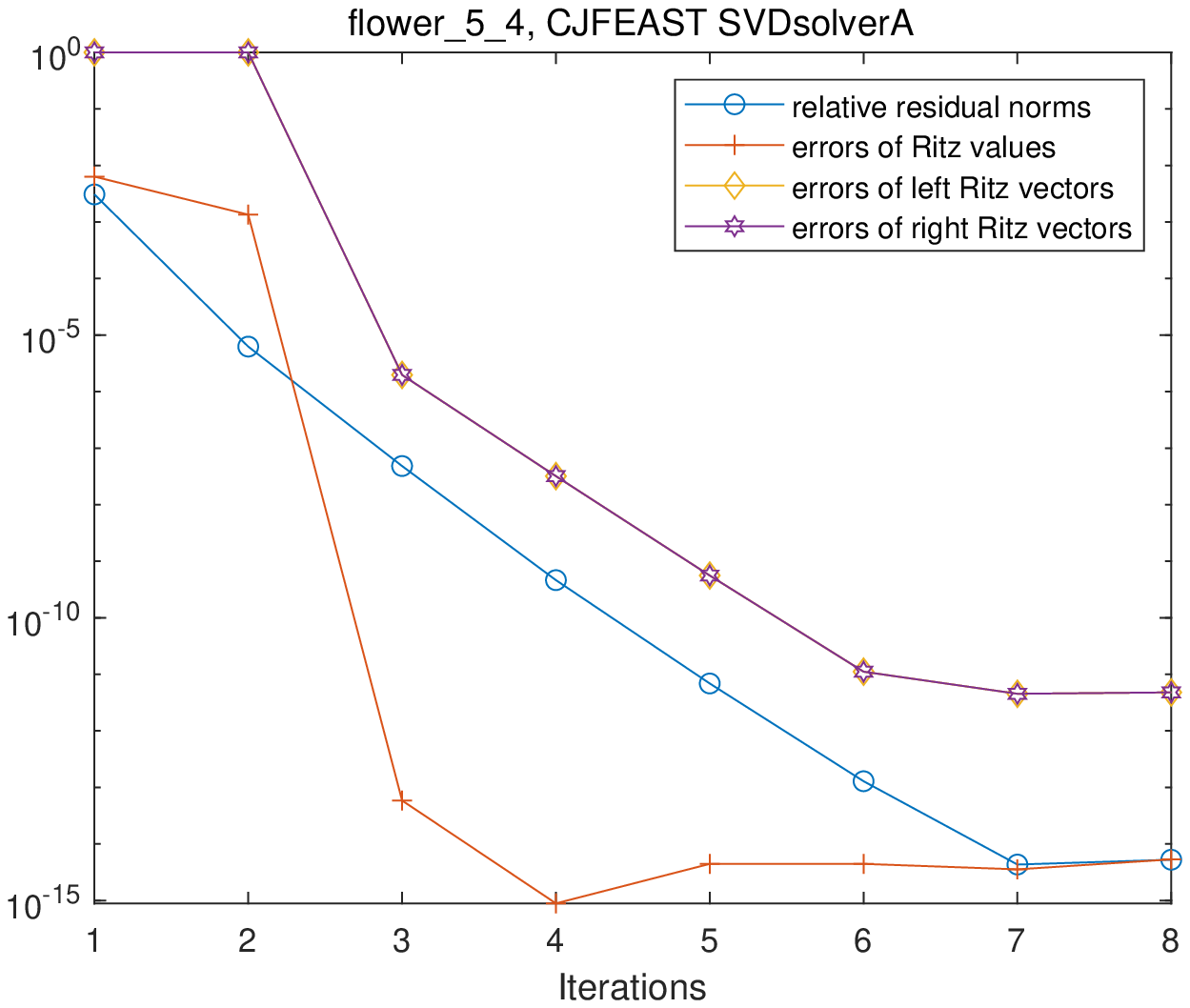}}
    \subfloat[flower\_5\_4, CJ-FEAST SVDsolverC]{\includegraphics[scale=0.4]{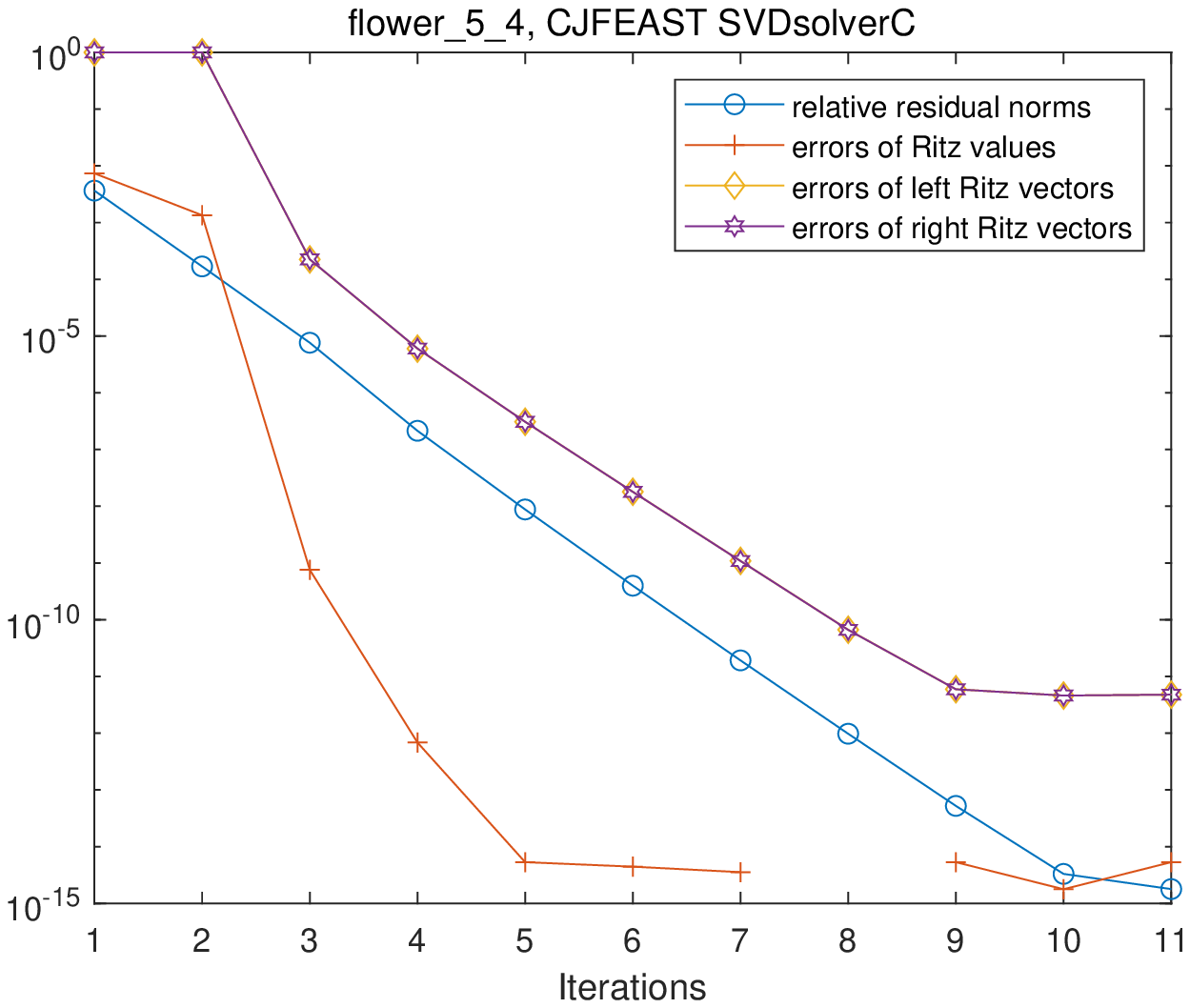}}

    \subfloat[rel8, CJ-FEAST SVDsolverA]{\includegraphics[scale=0.4]{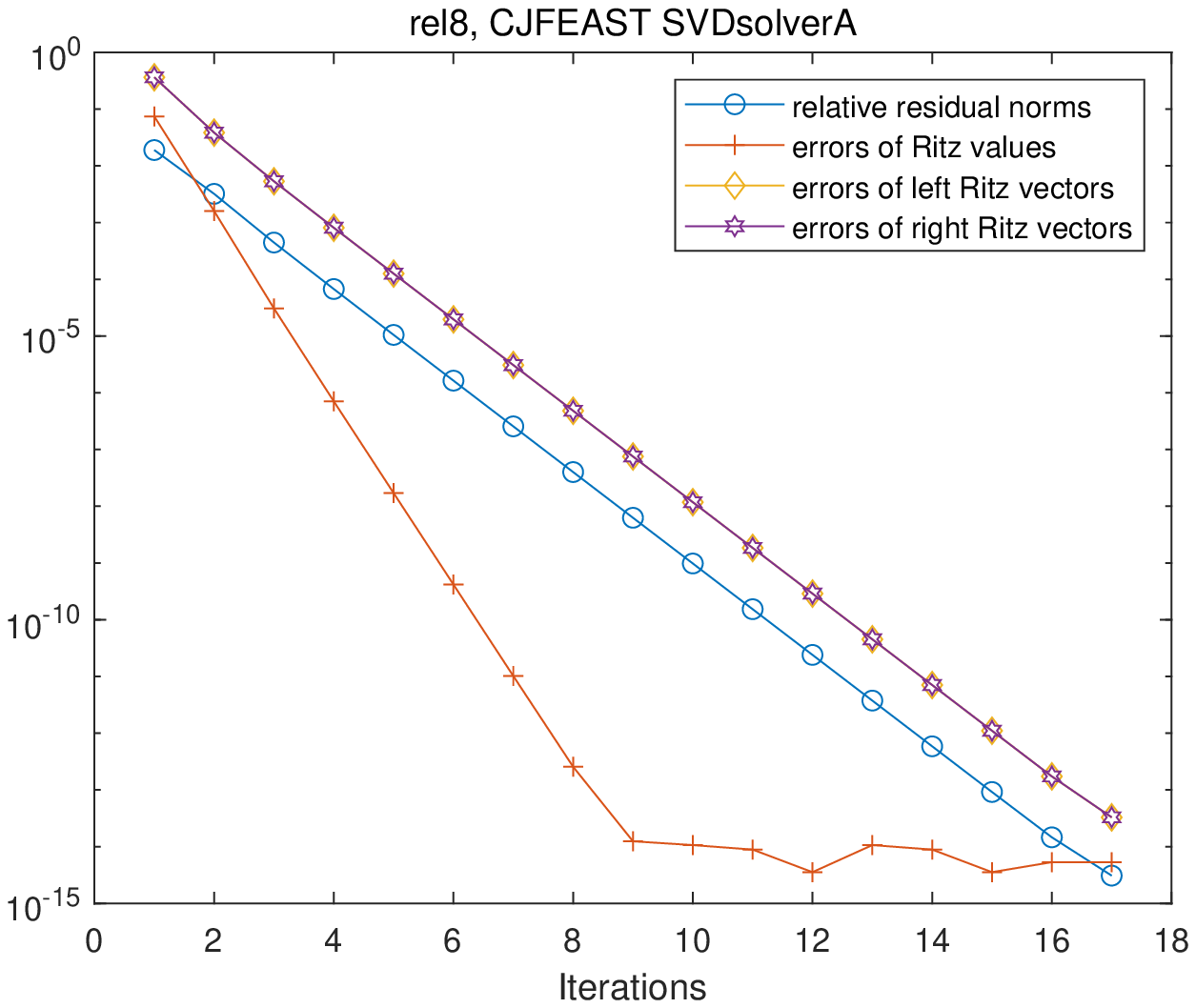}}
    \subfloat[rel8, CJ-FEAST SVDsolverC]{\includegraphics[scale=0.4]{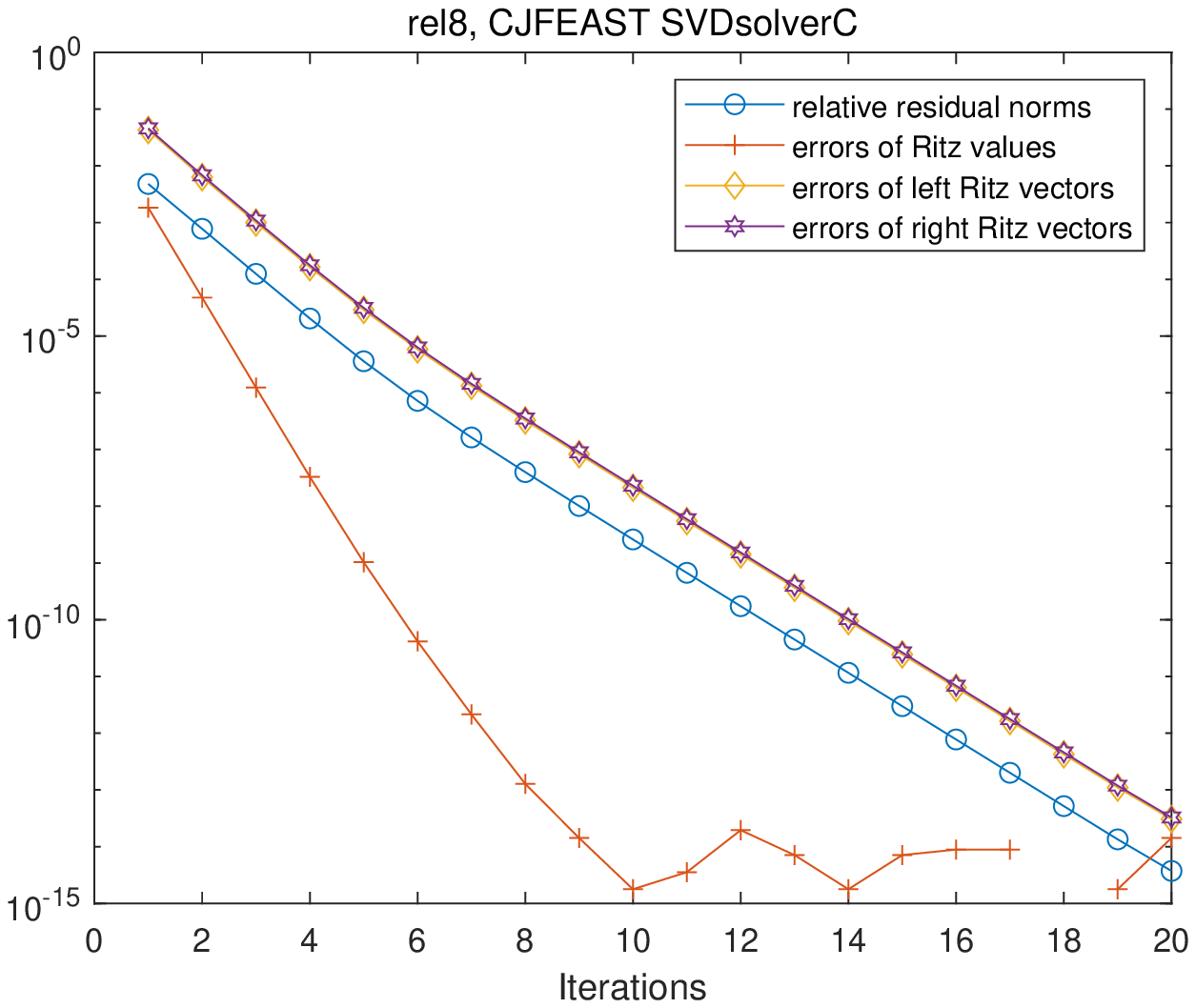}}
    \caption{Convergence processes of approximate singular triplets for not small singular values.}
    \label{fig:moderate singular values}
\end{figure}

These experiments justify that the choice strategy \eqref{dchoice}
of the series degree $d$ works well and, meanwhile,
they confirm \cref{rem:2.5 times larger}.
Clearly, we see from \Cref{fig:moderate singular values GL7d12} and
\cref{fig:moderate singular values}
that the convergence processes of the two solvers are very similar and the
Ritz value and the corresponding left and right Ritz vectors
have very comparable accuracy at each iteration. These confirm
that the CJ-FEAST SVDsolverC and SVDsolverA can compute the singular
triplets accurately when the desired singular values are not small but
the former more efficient than the latter. We can also
find that the errors of Ritz values are approximately
squares of those of the left and right Ritz vectors as well as
residual norms until the Ritz values have converged with the full accuracy
$\|A\|\mathcal{O}(\epsilon_{\mathrm{mach}})$,
as the results in \Cref{sec: review previous solver} and
\Cref{thm:priori} indicate.

\subsection{Computing singular triplets with small singular values}
We apply \cref{alg:Augmented-PSVD} and the CJ-FEAST SVDsolverC to barth5,
3elt\_dual and big\_dual.
For each problem, at least one of the desired singular values is small.

For barth5, one of the desired singular values is $\sigma=1.1050e-8$.
We take $D = 1$ to obtain
$d_a=1453$ and
$d_c=576$,
and the subspace dimension $p=\lceil 1.2\times 819\rceil=983$.
We run $10$ iterations, and draw their convergence processes
in \cref{fig:small singular values} (a) and (b).

For 3elt\_dual, one of the desired singular values is $\sigma=6.8890e-11$.
We take $D = 1$ to obtain
$d_a=918$ and
$d_c=364$,
and $p=\lceil 1.1\times 171\rceil=189$.
We run $15$ iterations, and draw their convergence processes
in \cref{fig:small singular values} (c) and (d).

For big\_dual, one of the desired singular values is $\sigma=8.7726e-13$.
We take $D = 1$ in \eqref{dchoice} to obtain
$d_a=918$ and
$d_c=591$,
and $p=\lceil 1.2\times 432\rceil=519$.
We run $10$ iterations, and draw their convergence processes
in \cref{fig:small singular values} (e) and (f).

\begin{figure}[tbhp]
    \centering
    \subfloat[barth5, CJ-FEAST SVDsolverA]{\label{fig:barth5_aug}\includegraphics[scale=0.4]{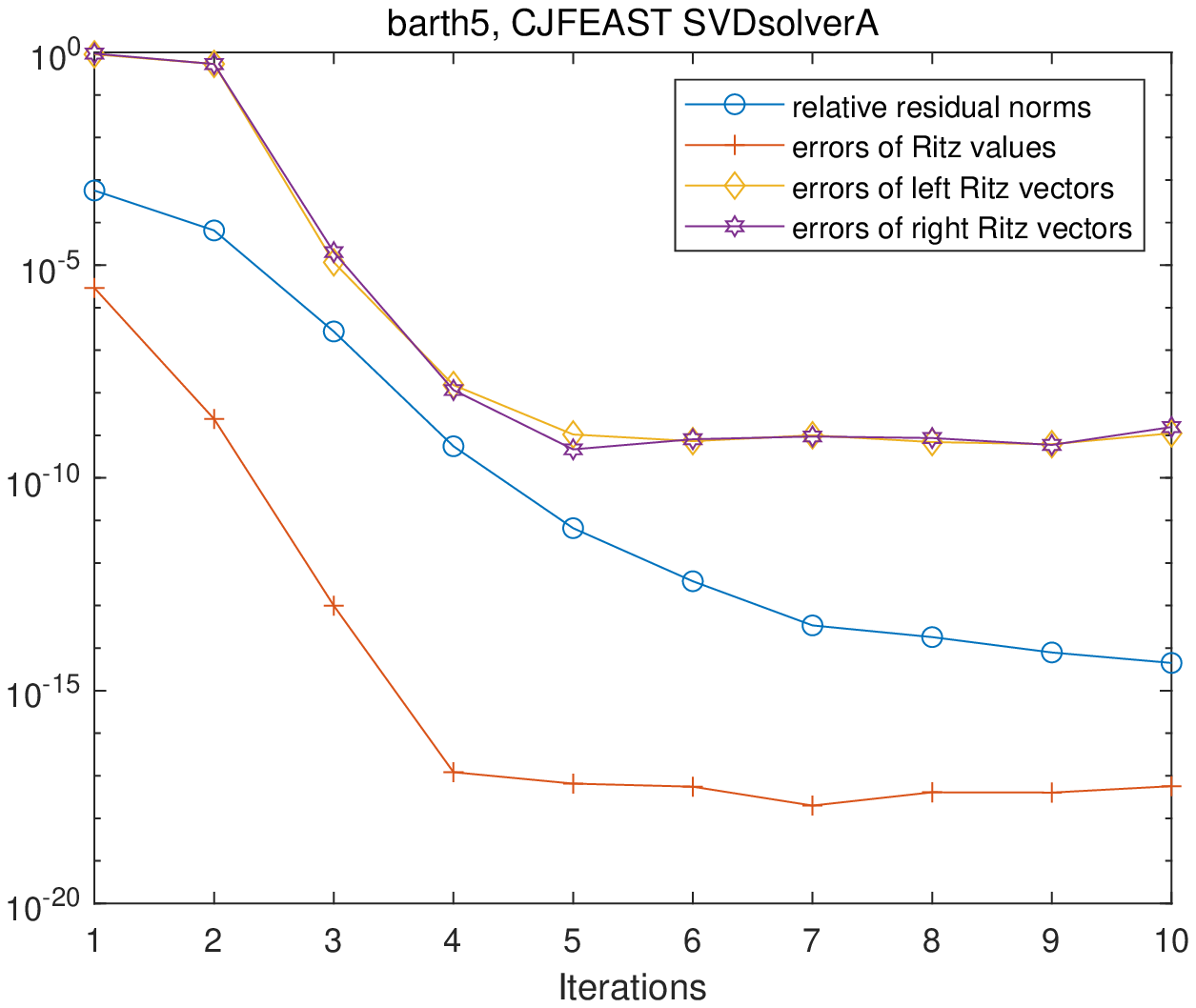}}
    \subfloat[barth5, CJ-FEAST SVDsolverC]{\label{fig:barth5_crossproduct}\includegraphics[scale=0.4]{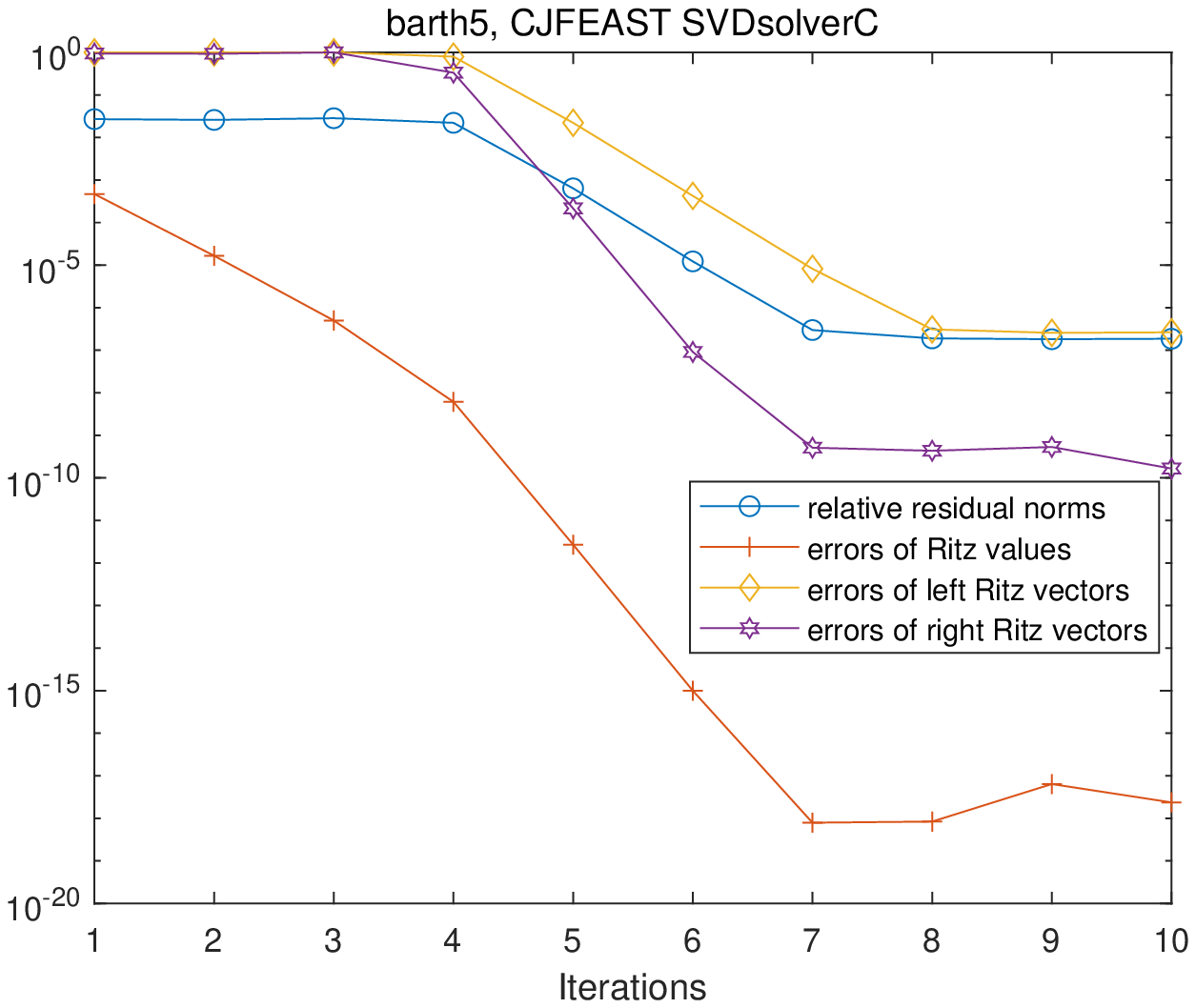}}

    \subfloat[3elt\_dual, CJ-FEAST SVDsolverA]{\includegraphics[scale=0.4]{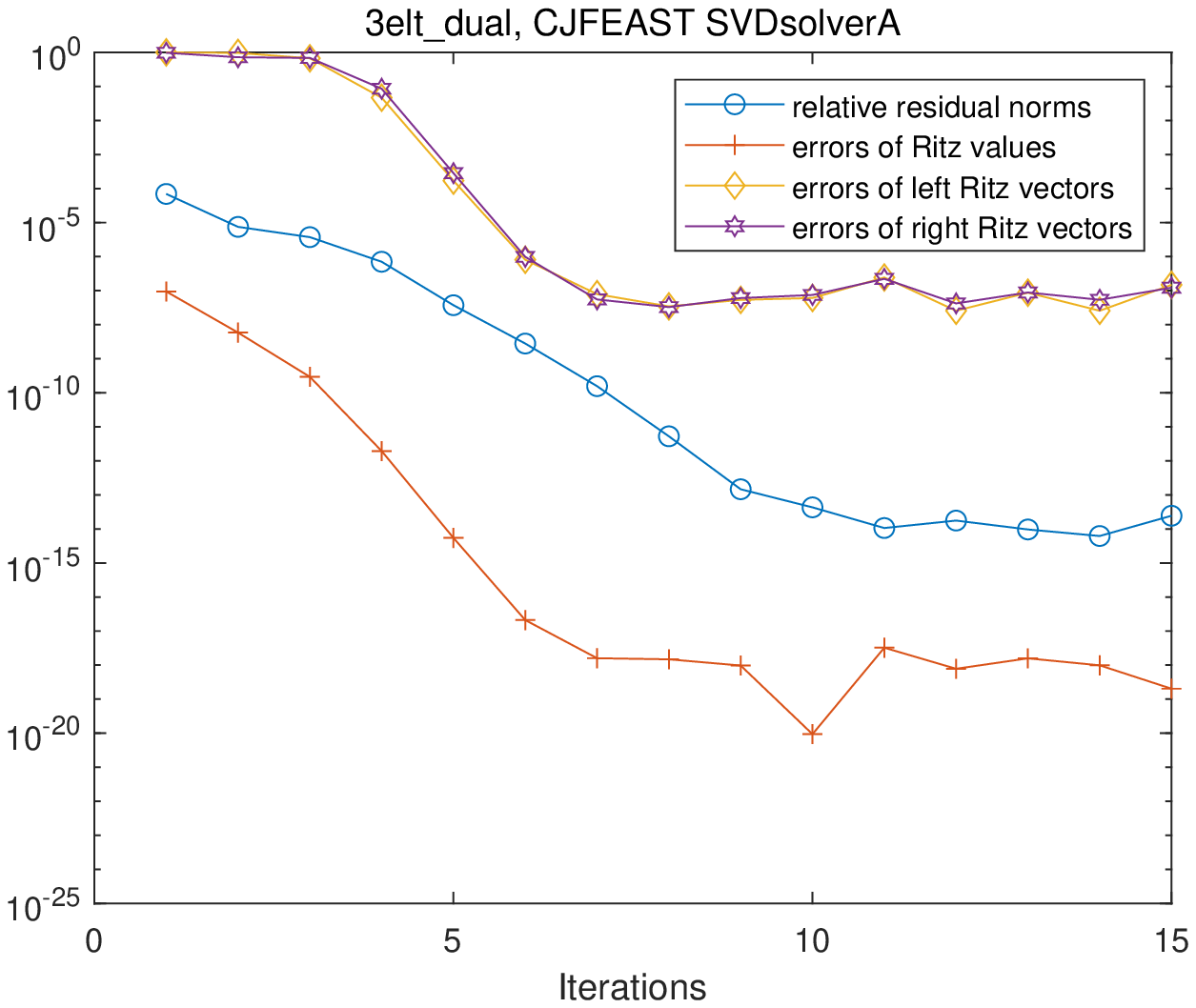}}
    \subfloat[3elt\_dual, CJ-FEAST SVDsolverC]{\includegraphics[scale=0.4]{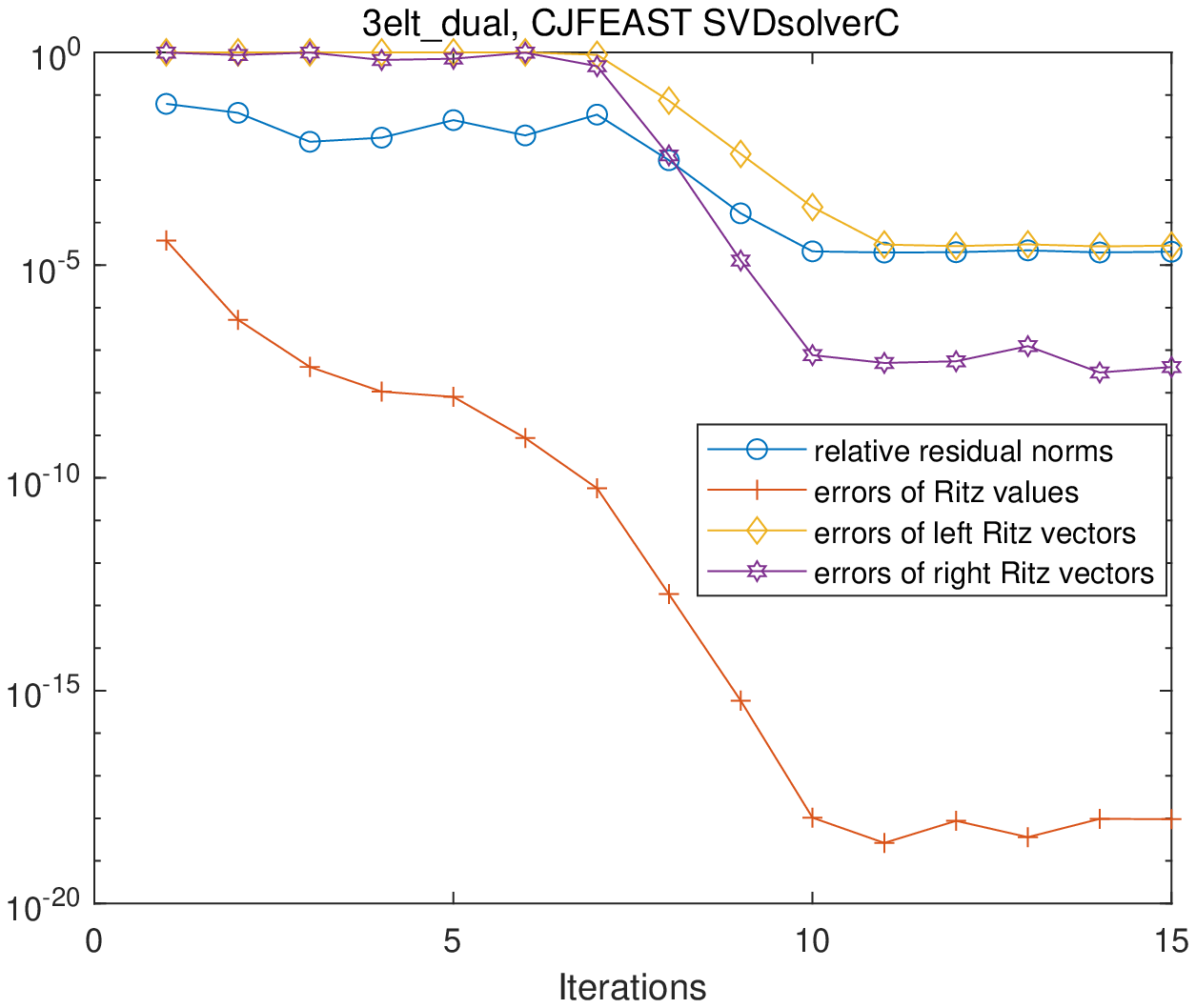}}

    \subfloat[big\_dual, CJ-FEAST SVDsolverA]{\includegraphics[scale=0.4]{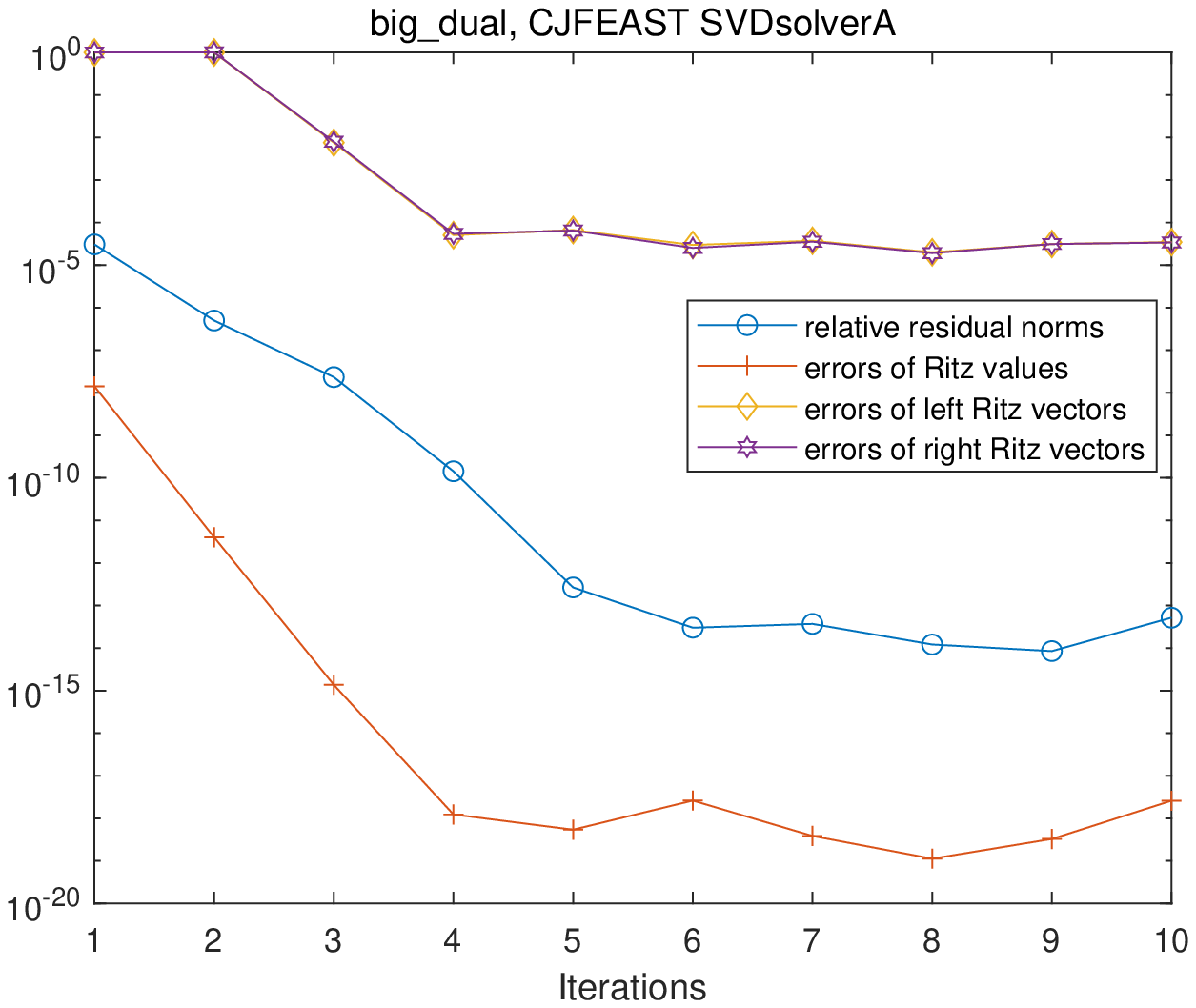}}
    \subfloat[big\_dual, CJ-FEAST SVDsolverC]{\includegraphics[scale=0.4]{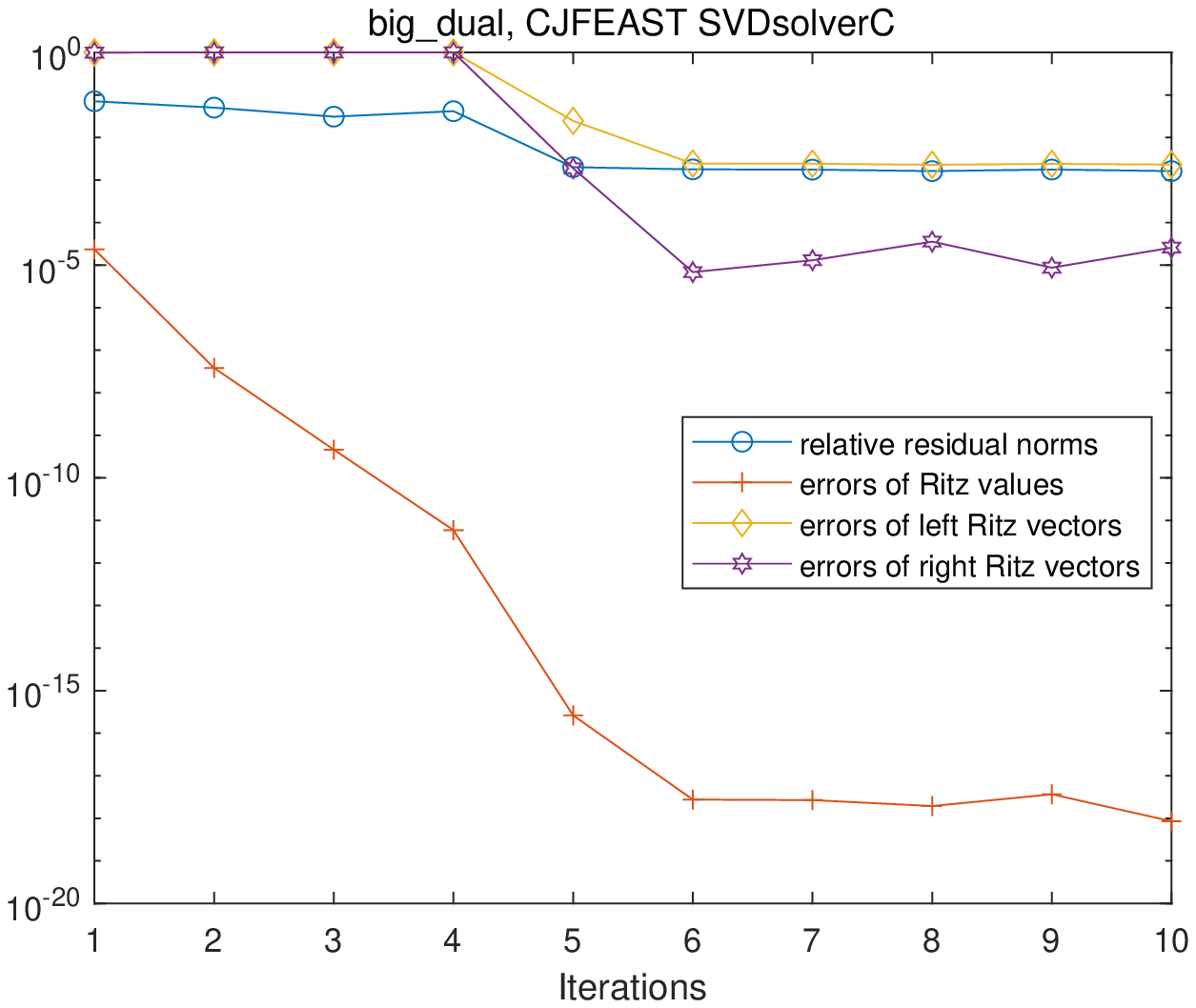}}
    \caption{Convergence processes of approximate singular triplets for small singular values.}
    \label{fig:small singular values}
\end{figure}

Several comments are made on \cref{fig:small singular values}.
First, for each problem,
the left and right Ritz vectors by the CJ-FEAST SVDsolverA
always have similar accuracy at the same iteration.
Second,
the right Ritz vectors computed by the two SVDsolvers have similar accuracy,
but the errors of the left Ritz vectors computed by the
CJ-FEAST SVDsolverC are a few orders larger than those computed by the CJ-FEAST SVDsolverA.
Third, as expected, the relative residual norms of the Ritz approximation
by the CJ-FEAST SVDsolverA decrease to
$\mathcal{O}(\epsilon_{\mathrm{mach}})$,
but those by the CJ-FEAST SVDsolverC stagnate before achieving
$\mathcal{O}(\epsilon_{\mathrm{mach}})$
due to the much less accurate left Ritz vectors. In fact,
for barth5, 3elt\_dual and big\_dual, the ultimately
relative residual norms are approximately $1e-7, 1e-5$
and $1e-3$, respectively, which are precisely $\|A\|/\sigma$
times larger than $\mathcal{O}(\epsilon_{\mathrm{mach}})$.
These facts justify our results and
analysis in \Cref{sec: review previous solver} and \Cref{sec: comparison},
and demonstrate that the CJ-FEAST SVDsolverC fails to converge
in finite precision arithmetic when \eqref{tolfail} is violated.
Fourth, the final errors of the Ritz values by the two solvers are
$\|A\|\mathcal{O}(\epsilon_{\mathrm{mach}})$, meaning that they
compute the singular values $\sigma$ to working precision,
independently of the size of $\sigma$.

In summary, the numerical experiments have illustrated
that the CJ-FEAST SVDsolverC may not compute left
singular vectors as accurately as the right ones and
may not make the residual norm drop below a reasonable $tol$
when at least one desired singular value is small. It is
conditionally numerically backward stable, but
the CJ-FEAST SVDsolverA is always unconditionally
numerically backward stable.

\section{Conclusions}\label{sec: conclusion}
Based on the convergence results on the CJ-FEAST SVDsolverC,
we have made an in-depth analysis of the
numerical backward stability of the solver and proved
that it may be numerically backward unstable
in finite precision arithmetic when computing small singular triplets.
The reason is that
it may compute the associated left singular vector much less accurately than
the right singular vector. Consequently, the residual norms of
Ritz approximations may not decrease to a reasonably prescribed
tolerance and the solver may thus fail in finite precision arithmetic when
$\|A\|/\sigma$ is large.

As an alternative, we have proposed
an augmented matrix $S_A$ based CJ-FEAST SVDsolverA. It first constructs
an approximate spectral projector $P$ of $S_A$ associated
with all the eigenvalues $\sigma\in [a,b]$ by exploiting the
Chebyshev--Jackson series expansion, then performs subspace iteration on $P$
to construct left and right searching subspaces independently, and finally
computes the Ritz approximations of the desired singular triplets with respect
to the left and right subspaces.

We have derived estimates for the eigenvalues of $P$
and the approximation error $\|P_{S_A}-P\|$
in terms of the series degree $d$.
We have established convergence results on the approximate left and right
singular subspaces
and the Ritz approximations, and shown that
the left and right Ritz vectors computed by the CJ-FEAST SVDsolverA
always have similar accuracy,
no matter how small the desired singular values are. We have proved that
the ultimate relative residual norms of Ritz approximations can always
attain $\mathcal{O}(\epsilon_{\mathrm{mach}})$,
meaning that the solver is numerically backward stable in finite precision
arithmetic. Therefore, the CJ-FEAST SVDsolverA is more robust than the CJ-FEAST
SVDsolverC when $\|A\|/\sigma$ is large.
We have made a theoretical comparison of the CJ-FEAST SVDsolverA
and SVDsolverC, showing that the latter is at least
$2\sqrt[3]{2}$ times as efficient as the former if they both converge for the
same tolerance $tol$.
Therefore, the CJ-FEAST SVDsolverC and SVDsolverA have their own
merits. For the purpose of robustness and overall efficiency, we have
proposed a practical choice strategy between the two CJ-FEAST SVDsolvers.

Illuminating numerical experiments have justified all of our
results.

\section*{Declarations}

The two authors declare that they have no
financial interests, and they read and approved the final manuscript.
The algorithmic Matlab code is available upon reasonable request from
the corresponding author.


\begin{thebibliography}{10}

\bibitem{avron2011randomized}
{\sc H.~Avron and S.~Toledo}, {\em Randomized algorithms for estimating the
  trace of an implicit symmetric positive semi-definite matrix}, J. ACM, 58
  (2011), pp.~Art. 8, 17, \url{https://doi.org/10.1145/1944345.1944349}.

\bibitem{Cortinovis2021onrandom}
{\sc A.~Cortinovis and D.~Kressner}, {\em On randomized trace estimates for
  indefinite matrices with an application to determinants}, Found. Comput.
  Math., 22 (2022), pp.~875--903,
  \url{https://doi.org/10.1007/s10208-021-09525-9}.

\bibitem{davis2011university}
{\sc T.~A. Davis and Y.~Hu}, {\em The {U}niversity of {F}lorida sparse matrix
  collection}, ACM Trans. Math. Software, 38 (2011), pp.~Art. 1, 25,
  \url{https://doi.org/10.1145/2049662.2049663}.

\bibitem{gavin2018ifeast}
{\sc B.~Gavin and E.~Polizzi}, {\em Krylov eigenvalue strategy using the
  {FEAST} algorithm with inexact system solves}, Numer. Linear Algebra Appl.,
  25 (2018), pp.~e2188, 20, \url{https://doi.org/10.1002/nla.2188}.

\bibitem{golub2013matrix}
{\sc G.~H. Golub and C.~F. Van~Loan}, {\em Matrix Computations}, Johns Hopkins
  Studies in the Mathematical Sciences, Johns Hopkins University Press,
  Baltimore, MD, fourth~ed., 2013.

\bibitem{guttel2015zolotarev}
{\sc S.~G\"{u}ttel, E.~Polizzi, P.~T.~P. Tang, and G.~Viaud}, {\em Zolotarev
  quadrature rules and load balancing for the {FEAST} eigensolver}, SIAM J.
  Sci. Comput., 37 (2015), pp.~A2100--A2122,
  \url{https://doi.org/10.1137/140980090}.

\bibitem{huangjia2021}
{\sc J.~Huang and Z.~Jia}, {\em On choices of formulations of computing the
  generalized singular value decomposition of a large matrix pair}, Numer.
  Algorithms, 87 (2021), pp.~689--718,
  \url{https://doi.org/10.1007/s11075-020-00984-9}.

\bibitem{huang2013}
{\sc T.-M. Huang, Z.~Jia, and W.-W. Lin}, {\em On the convergence of {R}itz
  pairs and refined {R}itz vectors for quadratic eigenvalue problems}, BIT, 53
  (2013), pp.~941--958, \url{https://doi.org/10.1007/s10543-013-0438-0}.

\bibitem{imakura2021complexmoment}
{\sc A.~Imakura and T.~Sakurai}, {\em Complex moment-based method with
  nonlinear transformation for computing large and sparse interior singular
  triplets}, Sept. 2021, \url{https://doi.org/10.48550/arXiv.2109.13655}.

\bibitem{jay1999electronic}
{\sc L.~O. Jay, H.~Kim, Y.~Saad, and J.~R. Chelikowsky}, {\em Electronic
  structure calculations for plane-wave codes without diagonalization}, Comput.
  Phys. Commun., 118 (1999), pp.~21--30,
  \url{https://doi.org/10.1016/S0010-4655(98)00192-1}.

\bibitem{jia2006}
{\sc Z.~Jia}, {\em Using cross-product matrices to compute the {SVD}}, Numer.
  Algorithms, 42 (2006), pp.~31--61,
  \url{https://doi.org/10.1007/s11075-006-9022-x}.

\bibitem{jia2003implicitly}
{\sc Z.~Jia and D.~Niu}, {\em An implicitly restarted refined bidiagonalization
  {L}anczos method for computing a partial singular value decomposition}, SIAM
  J. Matrix Anal. Appl., 25 (2003), pp.~246--265,
  \url{https://doi.org/10.1137/S0895479802404192}.

\bibitem{jia2022afeastsvdsolver}
{\sc Z.~Jia and K.~Zhang}, {\em A {FEAST} {SVD}solver based on
  {C}hebyshev--{J}ackson series for computing partial singular value
  decompositions of large matrices}, 2022,
  \url{https://doi.org/10.48550/arXiv.2201.02901}.

\bibitem{kestyn2016feast}
{\sc J.~Kestyn, E.~Polizzi, and P.~T.~P. Tang}, {\em F{EAST} eigensolver for
  non-{H}ermitian problems}, SIAM J. Sci. Comput., 38 (2016), pp.~S772--S799,
  \url{https://doi.org/10.1137/15M1026572}.

\bibitem{mason2002chebyshev}
{\sc J.~C. Mason and D.~C. Handscomb}, {\em Chebyshev Polynomials}, Chapman \&
  Hall/CRC, Boca Raton, FL, 2003.

\bibitem{polizzi2009density}
{\sc E.~Polizzi}, {\em Density-matrix-based algorithm for solving eigenvalue
  problems}, Phys. Rev. B, 79 (2009), pp.~e115112, 6,
  \url{https://doi.org/10.1103/PhysRevB.79.115112}.

\bibitem{polizzi2020feast}
{\sc E.~Polizzi}, {\em {FEAST} eigenvalue solver v4.0 user guide}, 2020,
  \url{https://doi.org/10.48550/arXiv.2002.04807}.

\bibitem{rivlin1981introduction}
{\sc T.~J. Rivlin}, {\em An Introduction to the Approximation of Functions},
  Dover Books on Advanced Mathematics, Dover Publications, Inc., New York,
  1981.

\bibitem{saad2011numerical}
{\sc Y.~Saad}, {\em Numerical Methods for Large Eigenvalue Problems}, vol.~66
  of Classics in Applied Mathematics, SIAM, Philadelphia, PA, 2011,
  \url{https://doi.org/10.1137/1.9781611970739}.

\bibitem{sakurai2003projection}
{\sc T.~Sakurai and H.~Sugiura}, {\em A projection method for generalized
  eigenvalue problems using numerical integration}, J. Comput. Appl. Math., 159
  (2003), pp.~119--128, \url{https://doi.org/10.1016/S0377-0427(03)00565-X}.

\bibitem{stewart2001matrix}
{\sc G.~W. Stewart}, {\em Matrix {A}lgorithms, {V}ol. {II}: Eigensystems},
  SIAM, Philadelphia, PA, 2001, \url{https://doi.org/10.1137/1.9780898718058}.

\bibitem{stewart90}
{\sc G.~W. Stewart and J.~G. Sun}, {\em Matrix Perturbation Theory}, Computer
  Science and Scientific Computing, Academic Press, Inc., Boston, MA, 1990.

\bibitem{tang2014feast}
{\sc P.~T.~P. Tang and E.~Polizzi}, {\em F{EAST} as a subspace iteration
  eigensolver accelerated by approximate spectral projection}, SIAM J. Matrix
  Anal. Appl., 35 (2014), pp.~354--390,
  \url{https://doi.org/10.1137/13090866X}.

\end{thebibliography}

\end{document}